\newcommand{\be}{\begin{equation}}
\newcommand{\ee}{\end{equation}}
\newcommand{\bes}{\begin{equation}\begin{aligned}}
\newcommand{\ees}{\end{aligned}\end{equation}}
\newcommand{\ben}{\begin{equation}\nonumber\begin{aligned}}
\renewcommand{\leq}{\leqslant}
\renewcommand{\geq}{\geqslant}  
 \newtheorem{theorem}{Theorem}[section]
\newtheorem{lemma}[theorem]{Lemma}
\newtheorem{proposition}[theorem]{Proposition}
\newtheorem{definition}[theorem]{Definition}
\numberwithin{equation}{section}
\begin{document}

\baselineskip=1.3\baselineskip

\pagestyle{plain}

\title{ {Limiting Behavior of Non-Autonomous Stochastic  Reversible Selkov Lattice Systems Driven by Locally Lipschitz L\'{e}vy Noises}}

\author{
Guofu Li\footnote{School of Mathematical Sciences, Guizhou Normal University, Guiyang 550025, China, Email: liguoful@163.com.}\;,
Jianxin Wu\footnote{School of Mathematical Sciences, Guizhou Normal University, Guiyang 550025, China, Email: jianxinwu2001@163.com.}\;
Yunshun Wu\footnote{School of Mathematical Sciences, Guizhou Normal University, Guiyang 550025, China, Email: wuyunshun1979@163.com.}\;
}
\date{}
\maketitle

\medskip

\begin{abstract}
This work investigates the long-term distributional behavior of the reversible Selkov lattice systems defined on the set $\mathbb{Z}$ and driven by locally Lipschitz \emph{L\'{e}vy noises}, which possess two pairs of oppositely signed nonlinear terms and whose nonlinear couplings can grow polynomially with any order $p \geq 1$. Firstly, based on the global-in-time well-posedness in $L^{2}(\Omega, \ell^2 \times \ell^2)$, we define a \emph{continuous} non-autonomous dynamical system (NDS) on the metric space $(\mathcal{P}_{2}(\ell^2 \times \ell^2), d_{\mathcal{P}(\ell^2 \times \ell^2)})$, where $d_{\mathcal{P}(\ell^2 \times \ell^2)}$ is the dual-Lipschitz distance on $\mathcal{P}(\ell^2 \times \ell^2)$, the space of probability measures on $\ell^2 \times \ell^2$. Specifically, we establish that this non-autonomous dynamical system admits a  unique pullback measure attractor, characterized via measure-valued complete solutions and orbits in the sense of Wang (DOI.org/10.1016/j.jde.2012.05.015). Moreover, when the deterministic external forcing terms are periodic in time, we demonstrate that the pullback measure attractors are also periodic. We also study the upper semicontinuity of pullback measure attractors as $(\epsilon_1, \epsilon_2, \gamma_1, \gamma_2) \rightarrow (0, 0, 0, 0)$. The main difficulty in proving the pullback asymptotic compactness of the NDS in $(\mathcal{P}_{2}(\ell^2 \times \ell^2), d_{\mathcal{P}(\ell^2 \times \ell^2)})$ is caused by the lack of compactness in infinite-dimensional lattice systems, which is overcome by using uniform tail-ends estimates. And the inherent structure of the Selkov system precludes the possibility of any unidirectional dissipative influence arising from the interaction between the two coupled equations, thereby obstructing the emergence of a dominant energy-dissipation mechanism along a single directional pathway. Compared to the classical case of cubic nonlinearity, our results are significantly more general in scope and applicability.
\end{abstract}

{\bf Key words:} Selkov systems, upper semicontinuity, L\'{e}vy noises, Infinite-dimensional coupled lattice systems.

{\bf MSC 2020:} 37L55, 35B41, 37L60, 37B40, 60H10.

\vskip10mm

\section{Introduction}
\subsection{ Infinite-dimensional reversible Selkov model
  driven by   \emph{L\'{e}vy
noises}}

The Selkov equation is a significant and widely-used mathematical model for describing autocatalytic biochemical processes, such as glycolysis, which can be categorized into irreversible and reversible systems. The irreversible Selkov equation was initially introduced by Selkov in 1968 to model two irreversible chemical reactions (see \cite{37}). In contrast, the reversible Selkov model is primarily designed to represent the cubic autocatalytic, isothermal, and continuously supplied nature of two reversible biochemical reactions:
\begin{align}
 \hat{A} \rightleftharpoons \hat{S}, \quad \hat{S} + 2\hat{P} \rightleftharpoons 3\hat{P}, \quad \hat{P} \rightleftharpoons \hat{B},
\end{align} where $\hat{A}$ and $\hat{B}$ are controllable bath concentrations, and the substrate $\hat{S}$ and the product $\hat{P}$ form the freely responding internal part of the system. This model represents a strongly product-activated mechanism, which, within a suitable range of $\hat{A}$ and $\hat{B}$ concentrations, can give rise to oscillatory behavior. The reversible Selkov model is also known as the two-component Gray-Scott equations (see \cite{Art,Gray,Gray1}), which is part of the Brussels school, led by Ilya Prigogine, the famous physical chemist and Nobel Prize laureate (1977).

The motivation of this paper is to investigate the long-time distribution behaviors by means of pullback measure attractors of  reversible Selkov lattice systems defined on the integer set $\mathbb{Z}$ and driven by locally Lipschitz \emph{L\'{e}vy noises}:
\begin{align}\label{a1}
\left\{
\begin{aligned}
\mathrm{d}u_i(t)&=(d_1(u_{i+1}(t)-2u_{i}+u_{i-1}(t))-a_1u_i(t)+b_1u^{2p}_i(t)v_i(t)-b_2u^{2p+1}_i(t)+f_{1i}(t))\mathrm{d}t\\
&\quad+\epsilon_1\sum\limits_{k=1}^\infty(h_{k,i}(t)+\delta_{k,i}\tilde{\sigma_k}(t,u_i(t)))\mathrm{d}W_k(t)\\
&\quad+\epsilon_2\sum\limits_{k=1}^\infty\int_{|y_k|<1}(\kappa_{k,i}(t)+q_{k,i}(t,u_i(t-),y_k))\tilde{L}_k(\mathrm{d}t,\mathrm{d}y_k)\\
\mathrm{d}v_i(t)&=(d_2(v_{i+1}(t)-2v_i(t)+v_{i-1}(t))-a_2v_i(t)-b_1u^{2p}_i(t)v_i(t)+b_2u^{2p+1}_i(t)+f_{2i}(t))\mathrm{d}t\\
&\quad+\gamma_1\sum\limits_{k=1}^\infty(h_{k,i}(t)+\delta_{k,i}\tilde{\sigma_k}(t,v_i(t)))\mathrm{d}W_k(t)\\
&\quad+\gamma_2\sum\limits_{k=1}^\infty\int_{|y_k|<1}(\kappa_{k,i}(t)+q_{k,i}(t,v_i(t-),y_k))\tilde{L}_k(\mathrm{d}t,\mathrm{d}y_k)\\
\end{aligned}
\right.
\end{align}with initial conditions
\begin{align}\label{a2}
u_i(\tau)=u_{0,i},\ \ \ \ v_i(\tau)=v_{0,i},
\end{align}where $\tau\in\mathbb{R}$, $i\in\mathbb{Z}$, $
u=(u_i)_{i\in\mathbb{Z}}$, $v=(v_i)_{i\in\mathbb{Z}}\in\ell^2$, $0<\epsilon_i, \gamma_i\leq1$, $i=1,2$, $p\geq1$. $d_1$, $d_2$, $a_1$, $a_2$,
$b_1$, $b_2$
are positive constants, $f_1(t)=(f_{1i}(t))_{i\in\mathbb{Z}}$ and $f_2(t)=(f_{2i}(t))_{i\in\mathbb{Z}}$
$\in\ell^2$ are time dependent random sequences.
$\delta(t)=(\delta_{k,i}(t))_{k\in\mathbb{N},i\in\mathbb{Z}}$
is a  positive continuous functions. 
 $h(t)=(h_{k,i}(t))_{k\in\mathbb{N},i\in
\mathbb{Z}}$ and $\kappa(t)=(\kappa_{k,i}(t))_{k\in\mathbb{N},i\in\mathbb{Z}}$ are $\ell^2$-valued progressively measurable processes. $\sigma_k=\delta_{k,i}\tilde{\sigma_k}(t,\cdot)$ and $q_k=q_{k,i}(t,\cdot,y_k)$ are a sequence locally Lipschitz continuous functions.  \((W_k)_{k \in \mathbb{N}}\) denote a family of mutually independent, two-sided standard real-valued Wiener processes defined on  \((\Omega, \mathcal{F}, \{\mathcal{F}_t\}_{t \in \mathbb{R}}, \mathbb{P})\). The compensated Poisson random measure $\tilde{L}_k$ arising from a sequence of independent two-sides real-valued L\'{e}vy process $\{Z_k\}_{k\in \mathbb{Z}}$.

\textbf{L\'{e}vy noises}: The sequence $\{ W_k(t) \}$ represents independent two-sided real-valued Wiener processes defined on this probability space. We denote by $\{ Z_k(t) \}$ a sequence of independent two-sided real-valued L\'evy processes defined on the same probability space. A two-sided Poisson counting random measure is defined as
\[
L_k(t, A) =
\begin{cases}
\# \{ 0 \le s \le t : \Delta Z_k(s) \in A \}, & \text{if } t \ge 0, \\[6pt]
\# \{ t \le s \le 0 : \Delta Z_k(s) \in A \}, & \text{if } t < 0,
\end{cases}
\]
where $\#$ denotes the cardinality of a set, $A \in \mathcal{B}(\mathbb{R} \setminus \{0\})$, and $\Delta Z_k(s) = Z_k(s) - Z_k(s-).$ Let $\nu_k(dy_k)$ be a $\sigma$-finite L\'evy measure associated with a Poisson random measure $L_k(dt, dy_k)$, satisfying

\[
\nu_k(dy_k)\, dt = \mathbb{E}\bigl[ L_k(dt, dy_k) \bigr]
\quad \text{and} \quad
\sum_{k \in \mathbb{N}} \int_{\mathbb{R} \setminus \{0\}} \bigl( |y_k|^2 \wedge 1 \bigr) \, \nu_k(dy_k) < \infty.
\]
This implies that there exists a positive constant $M_{\text{jump}}$ such that
\[
\sum_{k \in \mathbb{N}} \int_{\mathbb{R} \setminus \{0\}} \bigl( |y_k|^2 \wedge 1 \bigr) \, \nu_k(dy_k) \le M_{\text{jump}}.
\]
The two-sided compensated Poisson random measure is denoted by
\[
\widetilde{L}_k(dt, dy_k) \triangleq L_k(dt, dy_k) - \nu_k(dy_k) dt.
\]
Throughout this paper, we assume that $\{ W_k \}_{k \in \mathbb{N}}$ and $\{ Z_k(t) \}$ are mutually independent. This implies that $\{ W_k \}_{k \in \mathbb{N}}$ and $\{ L_k(dt, dy_k) \}_{k \in \mathbb{N}}$ are mutually independent, the pair $((W_k)_{k\in\mathbb{N}}, (L_k)_{k\in\mathbb{N}})$ is called a family of \emph{two-sided} \emph{L\'{e}vy} noises, see \cite{Ly} and Figure $1$.

\vspace{-8pt}
\begin{center}
    \includegraphics[width=0.9\textwidth, height=5cm]{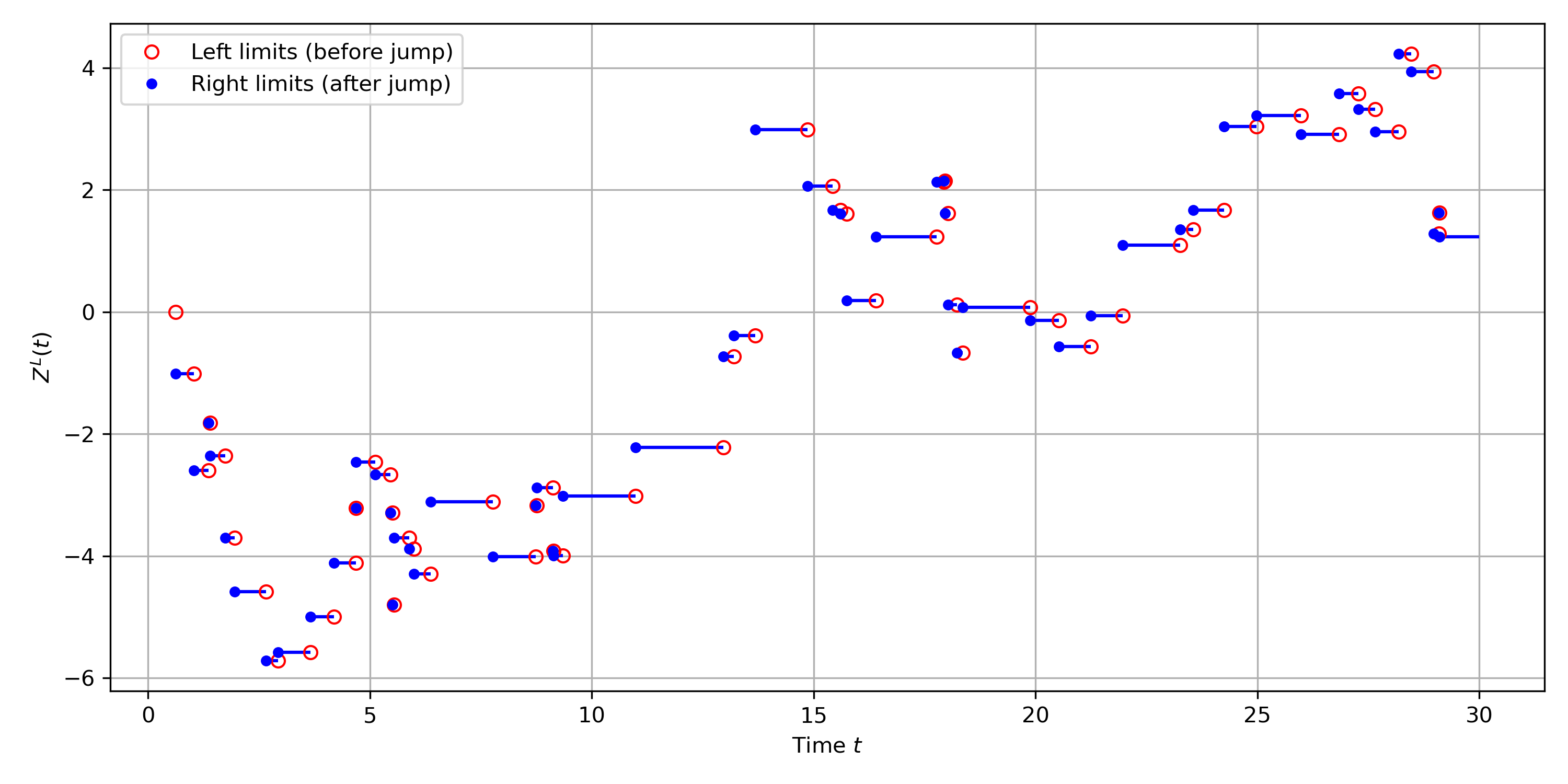}\\
    \small\textbf{Figure 1:} Sample paths of L\'{e}vy processes (\textbf{c\`{a}dl\`{a}g process})
    \label{fig:cadlag_path}
\end{center}
\vspace{-8pt}
\subsection{Literature survey and Motivations}
For the past few years, the widespread application of \textbf{lattice dynamical systems (LDSs)}, such as in pattern formation, neural pulse propagation, and circuits, has drawn considerable attention from many researchers, for more detailed information, please refer to \cite{1, 2,JJJ2}. The solutions of deterministic lattice systems have been thoroughly investigated in the literature, traveling wave solutions \cite{6, 7}, chaotic solutions \cite{8, 9}, and global attractors \cite{10, 11}.  For more details and results on the dynamics of the dynamics of stochastic lattice dynamical systems, see \cite{5,YYY00,YYY01,YYY02} for deterministic lattice models, and \cite{12,QAZ12,KKK00,KKK01,KKK02} for stochastic case.
When $\epsilon_2=0$, $\gamma_2=0$, the reversible Selkov lattice systems driven by white noises have been discussed by Wang and Zhang, see \cite{40,Zhang7}. Stochastic reversible Selkov lattice system driven by  L\'{e}vy noises \eqref{a1} reduces to the deterministic lattice system ($\epsilon_1=\epsilon_2=\gamma_1=\gamma_2=0$).
The deterministic reversible Selkov equations have been extensively studied with respect to the existence, regularity, and stability of global attractors (see \cite{24,25}). Numerous researchers have also explored the existence and robustness of random attractors in stochastic reversible Gray-Scott systems (see \cite{26,27,28,29}). For stochastic lattice reversible Selkov equations, the existence of random attractors was discussed by Li \cite{LSQAN}. In the latest study, Wang et al in \cite{num987} studied upper semi-continuity of numerical attractors for reversible Selkov lattice systems, for the relevant theories on numerical attractors for lattice systems, please refer to Li, Han, Kloeden Yang, \cite{LLNN987,LLNN988,LLNN990,LLNN989}.

\vspace{-8pt}
\begin{center}
    \includegraphics[width=0.9\textwidth, height=5cm]{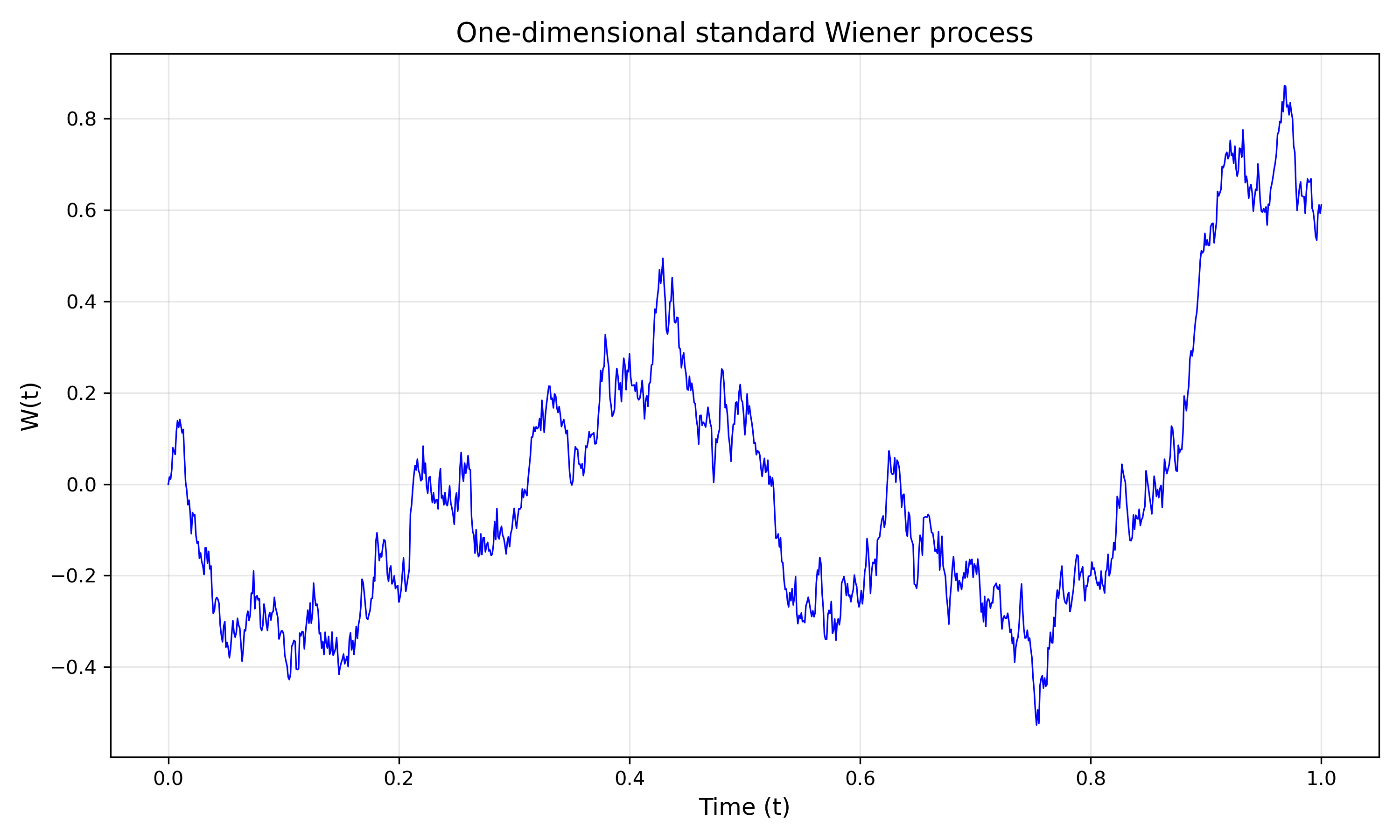}\\
    \small\textbf{Figure 2:} The sample path of \textbf{Brownian motion}
    \label{fig:levy_path}
\end{center}
\vspace{-8pt}

The lattice models mentioned above are driven by continuous stochastic processes (Brownian motion).  Nevertheless, many real-valued systems may be affected by sudden disturbances in the environment, leading to discontinuous sample paths.  Therefore, L\'{e}vy noise has to be introduced into the systems to describe random phenomena with jumps. To the best of our knowledge, there are not many literature results on measure attractors of Selkov lattice systems, and the current results are all devoted to the case where the type of noises  are driven by white noise rather than \emph{L\'{e}vy noises}. Our main interest in this article is to discuss the existence,
uniqueness, periodicity and upper semicontinuity of \emph{pullback measure  attractors} in  $(\mathcal{P}_{2}(\ell^2\times\ell^2),
d_{\mathcal{P}(\ell^2\times\ell^2)})$ for Selkov lattice systems \eqref{a1}
defined on $Z$ and driven by \emph{L\'{e}vy noise}. For the study on lattice models driven by L\'{e}vy noise,
the readers can refer to literature by Tom\'{a}s et al. \cite{levy1,levy2,OO546,YYUUU1} and the references therein for more details.

To study path-wise random attractors, one must first transform the stochastic evolutionary PDEs into pathwise evolutionary systems. When the noise coefficients are either additive or linearly multiplicative, such a transformation is possible through the well-known Ornstein-Uhlenbeck or Cole-Hopf transformations, see \cite{42,44,45,46,47,50}. However, these methods do not apply when the noise coefficients are nonlinear in the unknown function. In light of this, an alternative approach for analyzing the long-term dynamics of solutions to stochastic evolutionary PDEs with nonlinear noise coefficients is to adopt the concept of mean random attractors, as introduced by Kloeden, Lorenz, and Wang \cite{Kloeden, wangweak1, wangweak2, wangweak3}. Another promising approach to investigate the asymptotic behavior of solutions to nonlinear noise-driven PDEs involves the notion of measure attractors. Initially proposed by Schmalfu\ss \cite{Schmalfuss1, Schmalfuss2} to explore the long-term behavior of solutions to the \emph{autonomous} Navier-Stokes equations with nonlinear noise, this concept has recently been extended by Li and Wang \cite{LID1} (also see \cite{LID2, Shi}) to study the long-term behavior of solutions to \emph{nonautonomous} reaction-diffusion equations under nonlinear noise. This framework provides a more general and adaptable tool for analyzing complex stochastic Selkov lattice systems where traditional transformation methods are not applicable. The main focus of this research consists of three core aims, the details are as follows:
\begin{itemize}
    \item To establish with rigorous  theory of pullback measure attractors that the lattice systems defined by $\eqref{a1}$ not only possess unique solutions in \cite{LGF} but also exhibit the existence of pullback measure attractors;
    \item To provide a detailed demonstration for uniform moment estimates and $\mathcal{D}-$pullback asymptotic compact that are used to establish the existence and convergence of pullback measure attractors;
    \item To prove the upper semicontinuity of $\mathcal{D}$-pullback measure attractors for the non-autonomous stochastic lattice systems as the noise intensity  $(\epsilon_1,\epsilon_2,\gamma_1,\gamma_2)\rightarrow(0,0,0,0)$.
\end{itemize}

\subsection{The methods and techniques of the study}
\begin{itemize}
\item We remark that a key step for establishing the existence, uniqueness and periodicity of measure
pullback attractors of $S$ in $(\mathcal{P}_{2}(\ell^2\times\ell^2),
d_{\mathcal{P}(\ell^2\times\ell^2)})$ is to prove the
\emph{pullback asymptotic compactness} of $S$ (NDS) in $(\mathcal{P}_{2}(\ell^2\times\ell^2),
d_{\mathcal{P}(\ell^2\times\ell^2)})$, that is,  we shall establish the \emph{pullback asymptotic tightness} of probability distributions of solution operators of \eqref{b8} on    $(\mathcal{P}_{2}(\ell^2\times\ell^2),
d_{\mathcal{P}(\ell^2\times\ell^2)})$. In fact, the difficulty caused by the lack of compactness in infinite-dimensional lattice systems. Note that this type of obstacle is similar to that of showing the tightness of distribution of solutions to stochastic PDEs defined on unbounded domains, see \cite{QAZ12,wangweak2,QAZ14}.
For stochastic
evolutionary PDEs
defined on
bounded
 domains (see \cite{LID1,LID2,Schmalfuss1,Schmalfuss2}), one can use a compactness argument based on compact Sobolev embeddings to prove such asymptotic tightness. Since most problem is defined on $\mathbb{R}^d$, this compactness argument fails to establish such asymptotic tightness.  In fact, this idea was first developed by Wang \cite{Wangphysd} in proving the existence of global attractors of deterministic reaction-diffusion equations on $\mathbb{R}^d$, and then widely used to solve other problems with lack of compactness in random/stochastic cases such as the existence of pathwise random attractors and invariant measures of stochastic evolutionary PDEs and lattice systems on unbounded domains, see \cite{16,
wangweak2}.
Utilizing the uniform asymptotic tails-ends-smallness of the solutions in $L^2(\Omega,\ell^2\times\ell^2 )$,  we are able to establish the pullback asymptotic compactness
and the existence of a unique measure pullback attractor
for the $S$ in $(\mathcal{P}_{2}(\ell^2\times\ell^2),
d_{\mathcal{P}(\ell^2\times\ell^2)})$, see Theorem \ref{QQQ1}.

    \item For the analysis of Selkov lattice systems, we mathematically allow that the coupled drift term grows
    with an arbitrary index $p \geq 1$. This is different from the cubic nonlinearity case
    \cite{LSQAN}. In our case, we need to handle two pairs of oppositely signed drift terms
    $\pm F(u,v)$ and $\pm G(u)$ in the $u$-equation and the $v$-equation. This obstacle
    is overcame by carefully rearranging and analyzing these coupled nonlinear terms,
    see Lemma \ref{yingli41} and Lemma \ref{z42}.
   Note a single reaction-diffusion equation, the dissipativity is established by either imposing or checking the asymptotic sign condition on the nonlinear function \(f(u)\) representing the reaction rate, i.e.,
\[
\limsup_{|s|\to\infty}f(s)\,s\leq 0.
\]
But for most reaction-diffusion systems consisting of two or more equations arising from chemical and biochemical kinetics, the corresponding asymptotic sign condition in vector version is usually or inherently not satisfied.

   \item It is worth noting that this paper focuses on the pullback measure attractors of coupled lattice equations, which differs from the existing research on single-equation lattice systems in \cite{LGF21,QAZ12,mnbv2} and overcomes several essential difficulties. Compared to Brownian motion, a L\'{e}vy process is more general and may include jumps, with potentially discontinuous paths, whereas Brownian motion is merely the continuous special case of a L\'{e}vy process. In particular, the application of L\'{e}vy-type It\^{o}' formulas requires additional technical care, especially in establishing  uniform moment estimates  and proving the  upper semicontinuity of pullback measure attractors, where the presence of jumps introduces significant extra challenges.

\end{itemize}

\subsection{Main results}
Here are three results  results of this article, which establish the existence, uniqueness and  periodicity of the pullback measure attractors for the system \eqref{a1}, as well as their upper semicontinuity.
\begin{theorem}
Suppose \eqref{b6}-\eqref{bc0}, and \eqref{xs} hold. Then for every $0 < \epsilon_i,\gamma_i \leq 1$, $i=1,2$, the system $S$ associated with \eqref{b8}-\eqref{bc1} has a unique $\mathcal{D}$-pullback measure attractor $\mathcal{A} = \{\mathcal{A}(\tau) : \tau \in \mathbb{R}\} \in \mathcal{P}_2(\ell^2\times\ell^2)$, which is given by, for each $\tau \in \mathbb{R}$,
\begin{align}
\mathcal{A}(\tau) = \omega(K, \tau) = \{\psi(0, \tau) : \psi \text{ is a } \mathcal{D}\text{-complete orbit of } S\} \nonumber \\
= \{\xi(\tau) : \xi \text{ is a } \mathcal{D}\text{-complete solution of } S\},
\end{align}
where $K = \{K(\tau) : \tau \in \mathbb{R}\}$ is the $\mathcal{D}$-pullback absorbing set of $S$ as given by Lemma \ref{h17}.
\end{theorem}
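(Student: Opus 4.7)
The plan is to apply the abstract existence theorem for $\mathcal{D}$-pullback measure attractors of a continuous NDS on a Polish metric space in the sense of Wang's framework cited in the theorem statement. Given the $\mathcal{D}$-pullback absorbing set $K$ already constructed in Lemma \ref{h17} and the continuity of $S$ on $(\mathcal{P}_2(\ell^2\times\ell^2), d_{\mathcal{P}(\ell^2\times\ell^2)})$ established earlier in the paper, the only remaining ingredient is the $\mathcal{D}$-pullback asymptotic compactness of $S$ in this metric space. Once this compactness is verified, existence and uniqueness of $\mathcal{A}$, together with the identity $\mathcal{A}(\tau)=\omega(K,\tau)$, follow immediately from the abstract theorem; the further characterizations as $\{\psi(0,\tau):\psi\text{ is a }\mathcal{D}\text{-complete orbit of }S\}$ and $\{\xi(\tau):\xi\text{ is a }\mathcal{D}\text{-complete solution of }S\}$ are standard consequences of the strict invariance of $\mathcal{A}$ and the abstract characterization of omega-limit sets via backward trajectories for continuous NDS.

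To verify the asymptotic compactness, I would reduce it to tightness on the Polish space $\ell^2\times\ell^2$ together with uniform square-integrability. Let $t_n\to\infty$ and $\mu_n\in K(\tau-t_n)$, and write $(u^{(n)}(\tau),v^{(n)}(\tau))$ for the solution of \eqref{b8}--\eqref{bc1} at time $\tau$ with initial datum at time $\tau-t_n$ distributed as $\mu_n$. The absorbing property first delivers a uniform second-moment bound $\sup_n\mathbb{E}(\|u^{(n)}(\tau)\|^2+\|v^{(n)}(\tau)\|^2)<\infty$, which both yields uniform integrability in $\mathcal{P}_2$ and, via a finite-coordinate cut-off, reduces the tightness of $\mathcal{L}(u^{(n)}(\tau),v^{(n)}(\tau))$ to a uniform tail-end estimate of the form
\[
\lim_{N\to\infty}\limsup_{n\to\infty}\mathbb{E}\sum_{|i|>N}\bigl(|u^{(n)}_i(\tau)|^2+|v^{(n)}_i(\tau)|^2\bigr)=0,
\]
since the projection onto $\{|i|\le N\}$ lies in a bounded subset of a finite-dimensional Euclidean space and is therefore automatically tight. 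Weak convergence then upgrades to convergence in the dual-Lipschitz distance $d_{\mathcal{P}(\ell^2\times\ell^2)}$ through the uniform $L^2$-bound.

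The hard part is the tail-end smallness. The Selkov coupling contains two pairs of oppositely signed nonlinear interactions $\pm b_1 u_i^{2p}v_i$ and $\mp b_2 u_i^{2p+1}$, so neither equation is dissipative on its own; as emphasized in the paper, no unidirectional energy-dissipation mechanism is available. The energy method must therefore be applied to a carefully chosen positively weighted combination of the truncated $u$- and $v$-energies, localized by a smooth cut-off $\rho(|i|/N)$, so that the sign-indefinite cross terms cancel. This is precisely the cancellation structure exploited in Lemmas \ref{yingli41} and \ref{z42}, which I would re-run with the cut-off in place. A second complication is that the L\'evy jump integrals force the use of the It\^o formula for c\`adl\`ag semimartingales, producing compensator terms of the form
\[
\sum_{k}\int_{|y_k|<1}\bigl(|u_i+q_{k,i}(t,u_i,y_k)|^2-|u_i|^2-2u_i\,q_{k,i}(t,u_i,y_k)\bigr)\nu_k(dy_k)
\]
that must be dominated using the local Lipschitz hypothesis on $q_{k,i}$, the tail-decay conditions on $h$, $\kappa$, and $\delta$ in \eqref{xs}, and the finite-measure bound $\sum_k\int(|y_k|^2\wedge 1)\,\nu_k(dy_k)\le M_{\text{jump}}$. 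A Gronwall integration of the resulting tail-energy inequality, followed by first sending $t_n\to\infty$ and then $N\to\infty$, supplies the required uniform smallness, completes the tightness argument, and thereby yields the desired asymptotic compactness and the full conclusion of the theorem.
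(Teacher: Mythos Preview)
Your proposal is correct and follows essentially the same route as the paper: the paper's proof of this theorem is a three-line application of the abstract criterion (Proposition~\ref{h19}), citing Lemma~\ref{h16} for continuity of $S$, Lemma~\ref{h17} for the closed $\mathcal{D}$-pullback absorbing set, and Lemma~\ref{h18} for $\mathcal{D}$-pullback asymptotic compactness. What you sketch inline as the ``hard part'' --- the uniform tail-end estimate via a cut-off function, the weighted combination $b_2\|u\|^2+b_1\|v\|^2$ that forces the sign-indefinite Selkov cross terms to cancel, and the control of the L\'evy compensator terms --- is exactly the content of Lemmas~\ref{z42} and~\ref{h18}, which the paper proves separately and then simply cites here.
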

\begin{theorem}
Suppose \eqref{b6}-\eqref{bc0}, \eqref{xs} and \eqref{xs1} hold. then for every $0 < \epsilon_i,\gamma_i \leq 1$, $i=1,2$, $S$ associated with \eqref{b8}-\eqref{bc1} has a unique $\chi$-periodic $\mathcal{D}$-pullback measure attractor $\mathcal{A}$ in $\mathcal{P}_2(\ell^2\times\ell^2)$.
\end{theorem}
\begin{theorem}
Suppose \eqref{b6}-\eqref{b7}, \eqref{bc0}, and \eqref{xs} hold. Then for $\tau \in \mathbb{R}$,
\begin{align}\label{h23}
\lim_{\lambda \to \lambda_0} d_{\mathcal{P}_2(\ell^2\times\ell^2)}(\mathcal{A}_{\lambda}(\tau), \mathcal{A}_{\lambda_0}(\tau)) = 0.
\end{align}
where $\lambda=(\epsilon_1,\epsilon_2,\gamma_1,\gamma_2)$, $\lambda_0=(0,0,0,0)$, for more details, we refer the reader to the finally section.
\end{theorem}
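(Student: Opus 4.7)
The plan is to follow the standard framework for upper semicontinuity of pullback attractors, adapted to the measure setting: combine a uniform (in a neighborhood of $\lambda_0$) pullback absorbing family with continuous dependence of the non-autonomous dynamical systems $S_\lambda$ on the parameter $\lambda$, and then invoke an abstract criterion such as the one in \cite{LID1}. Throughout, I interpret $d_{\mathcal{P}_2(\ell^2\times\ell^2)}$ in \eqref{h23} as the Hausdorff semi-distance induced by the dual-Lipschitz metric $d_{\mathcal{P}(\ell^2\times\ell^2)}$, so the statement is one-sided and the reverse inclusion is not required.

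First, I would upgrade the pullback absorbing family of Lemma \ref{h17} to be \emph{uniform} in $\lambda$. Because every bound in that lemma uses only $0<\epsilon_i,\gamma_i\leq 1$ together with \eqref{b6}-\eqref{bc0} and \eqref{xs} (in particular the finite total L\'evy-mass $M_{\mathrm{jump}}$), the absorbing radius does not blow up as $\lambda\to\lambda_0$. This produces a common $\mathcal{D}$-pullback absorbing family $K=\{K(\tau):\tau\in\mathbb{R}\}\subset\mathcal{P}_2(\ell^2\times\ell^2)$ such that $\mathcal{A}_\lambda(\tau)\subseteq K(\tau)$ for every $\lambda\in[0,1]^4$ and every $\tau\in\mathbb{R}$. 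A parallel re-examination of the tail-ends estimate used for pullback asymptotic compactness gives, uniformly in $\lambda$, that $\sup_{\lambda}\sup_{\mu\in K(\tau)}\int_{|i|\geq N}(|u_i|^2+|v_i|^2)\,d\mu\to 0$ as $N\to\infty$; this equi-tightness of the absorbing family in $\ell^2\times\ell^2$ is the ingredient that compensates for the lack of compactness.

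Second, I would establish the finite-time convergence $S_\lambda(t,\tau)\mu\to S_{\lambda_0}(t,\tau)\mu$ in $d_{\mathcal{P}(\ell^2\times\ell^2)}$ for each fixed $\tau\in\mathbb{R}$, $t\geq\tau$, and each $\mu\in\mathcal{P}_2(\ell^2\times\ell^2)$ supported in $K(\tau)$. Couple the solution $X^\lambda=(u^\lambda,v^\lambda)$ of \eqref{a1} and the solution $X^{\lambda_0}$ of the deterministic limit through identical initial data, and apply the L\'evy-It\^o formula to $\|X^\lambda(s)-X^{\lambda_0}(s)\|_{\ell^2\times\ell^2}^2$. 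The drift difference is controlled by the cancellation scheme for the two pairs of oppositely signed coupled nonlinearities $\pm b_1 u^{2p}v$ and $\mp b_2 u^{2p+1}$ developed in Lemma \ref{yingli41} and Lemma \ref{z42}. The Brownian and compensated-Poisson contributions are handled by Burkholder-Davis-Gundy and the It\^o isometry $\mathbb{E}\int_\tau^t\sum_k\int_{|y_k|<1}|\cdots|^2\nu_k(dy_k)\,ds$, and each produces an overall prefactor $\epsilon_i^2,\gamma_i^2\to 0$. Because $\tilde\sigma_k$ and $q_k$ are only locally Lipschitz and the drift grows polynomially of order $2p+1$, I would localise with the stopping time $\tau_R=\inf\{s\geq\tau:\|X^\lambda(s)\|+\|X^{\lambda_0}(s)\|>R\}$, use Step 1's uniform moment bounds to send $R\to\infty$, and close a Gronwall estimate to obtain $\mathbb{E}\|X^\lambda(t)-X^{\lambda_0}(t)\|^2\to 0$. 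Pairing with the uniform tail estimate of Step 1 lifts this to the full $\ell^2\times\ell^2$ norm, and integrating against bounded Lipschitz test functions gives the desired dual-Lipschitz convergence of distributions, uniformly over $\mu$ supported in $K(\tau)$.

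Putting uniform absorption, uniform equi-tightness, and continuous dependence together, the abstract upper semicontinuity theorem for pullback measure attractors yields $\limsup_{\lambda\to\lambda_0}d_{\mathcal{P}_2(\ell^2\times\ell^2)}(\mathcal{A}_\lambda(\tau),\mathcal{A}_{\lambda_0}(\tau))=0$, which is \eqref{h23}. The main obstacle will be the simultaneous presence of three features: the locally Lipschitz, polynomially growing coupled nonlinearities of order $2p+1$; the jump component, whose It\^o formula produces an additional compensator contribution that must be bounded uniformly in $\lambda$ through the finite-L\'evy-mass condition; and the infinite dimensionality of $\ell^2\times\ell^2$. Reconciling the stopping-time localisation required by the first two features with the uniform tail-ends estimate required for the third is the technically most delicate part of the argument; once this is in hand, the remainder of the proof is a routine application of the abstract machinery.
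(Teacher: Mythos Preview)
Your proposal follows essentially the same approach as the paper: a uniform-in-$\lambda$ absorbing family (the paper packages this as \eqref{h21}), finite-time $L^2$-convergence of coupled solutions via the L\'evy--It\^o formula, BDG and Gronwall (Lemma \ref{h22}), followed by a direct triangle-inequality argument combining invariance of $\mathcal{A}_\lambda$ with the attraction property of $\mathcal{A}_{\lambda_0}$. One correction worth flagging: in the difference estimate the nonlinear drift terms are \emph{not} controlled by the sign-cancellation scheme of Lemmas \ref{yingli41}--\ref{z42}; once you pair $F(u^\lambda,v^\lambda)-F(u^{\lambda_0},v^{\lambda_0})$ against $u^\lambda-u^{\lambda_0}$ and $v^\lambda-v^{\lambda_0}$ separately the opposing signs no longer cancel, and the paper instead relies on the local Lipschitz bounds \eqref{b4}--\eqref{b5} with constants $c_1(n),c_2(n)$ depending on a moment radius---which is precisely why your stopping-time localization is the right device at that step (and is in fact more explicit than what the paper writes).
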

\vspace{-8pt}
\begin{center}
    \includegraphics[width=0.9\textwidth, height=5cm]{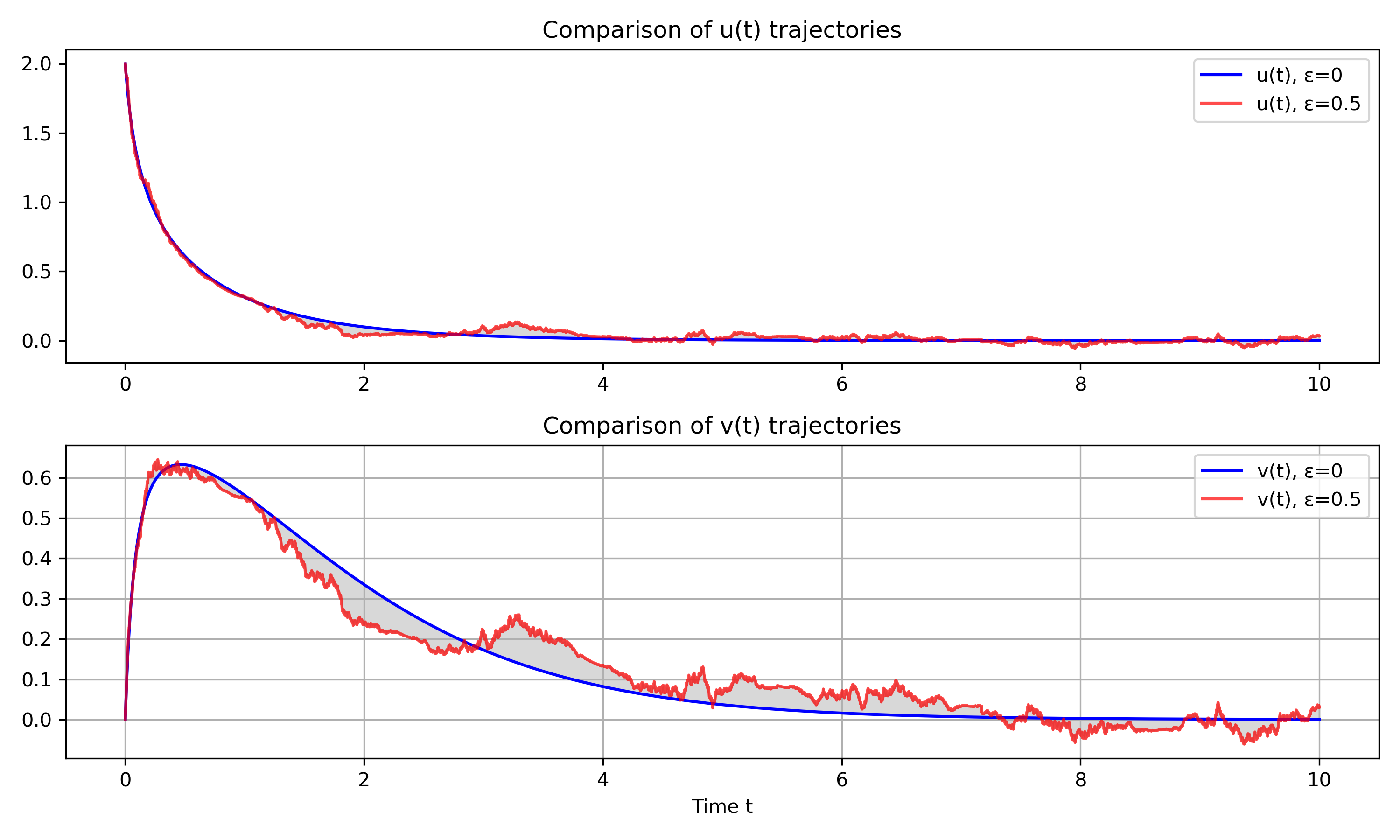}\\
    \small\textbf{Figure 3:} \textbf{$\lambda\rightarrow\lambda_0$}
    \label{upper semi-continuity}
\end{center}
\vspace{-8pt}

\subsection{Outline of paper}
The organization of this work is as follows. In Section 2, we explore the existence and uniqueness of solutions for the non-autonomous stochastic lattice system described by equations \eqref{b8} to \eqref{bc1}. In Section 3, we reconsider the key results concerning the existence, uniqueness, and periodicity of pullback measure attractors for non-autonomous dynamical systems, particularly those defined on the probability measure space within Banach spaces. Section 4 is dedicated to deriving uniform moment estimates for solutions as time \( t \to \infty \), which play a pivotal role in establishing the existence of absorbing sets as well as the pullback asymptotic compactness of the non-autonomous dynamical systems. These results are framed in the context of the Markov semigroup generated by the system \eqref{b8} to \eqref{bc1}. In the final three sections, we rigorously prove the existence, uniqueness, and periodicity of the pullback measure attractors for the system \eqref{b8} to \eqref{bc1}, and also investigate the behavior of these attractors in the limit as the parameters $(\epsilon_1,\epsilon_2,\gamma_1,\gamma_2)\rightarrow(0,0,0,0)$. In Section 7, we give a numerical simulation for a one-dimensional stochastic ODE \eqref{v0s}

\section{Existence and uniqueness of solutions}
In this section, we establish the existence and uniqueness of solutions to the system \eqref{a1}-\eqref{a2}. Some useful preliminaries and assumptions are provided, which will be frequently used in the subsequent discussions. Let $ \ell^r := \{ u = (u_i)_{i \in \mathbb{Z}} : \sum_{i \in \mathbb{Z}} |u_i|^r < +\infty \} $ for $ r \geq 1 $, with the norm of $ \ell^r $ denoted by $ \|\cdot\|_r $, while the norm and inner product of $ \ell^2 $ are denoted by $ \|\cdot\| $ and $ \langle \cdot, \cdot \rangle $, respectively.

Define the operators $A$, $B: \ell^2 \to \ell^2$ by
\begin{align}\label{b1}
(Au)_i &= -u_{i-1} + 2u_i - u_{i+1}, \quad \text{and} \quad (Bu)_i = u_{i+1} - u_i, \quad \text{for all } i \in \mathbb{Z}.
\end{align}Then, for all $u, v \in \ell^2$, the following relations hold:
\begin{align}\label{b2}
\langle Au, v \rangle &= \langle Bu, Bv \rangle, \quad \text{and} \quad \langle Au, u \rangle = \|Bu\|^2 \geq 0.
\end{align}Furthermore, for each $k \in \mathbb{N}$, define the operators $\sigma_k$, $q_k$, $F$, and $G$ as follows:
\begin{align*}
\sigma_k(t,u) &= (\delta_{k,i} \tilde{\sigma}_k(t,u_i))_{i \in \mathbb{Z}}, \quad q_k(t,u, y_k) = (q_{k,i}(t,u_i, y_k))_{i \in \mathbb{Z}}, \quad \text{for all } u \in \ell^2,
\end{align*}
\begin{align*}
F(u, v) &= (u_i^{2p} v_i)_{i \in \mathbb{Z}}, \quad G(u) = (u_i^{2p+1})_{i \in \mathbb{Z}}, \quad \text{for } u, v \in \ell^2.
\end{align*}In this paper, we will make use of the following inequalities:
\begin{align}\label{b0}
|\mathscr{X}^r - \mathscr{Y}^r| &\leq C_r |\mathscr{X} - \mathscr{Y}| \, |\mathscr{X}^{\,r-1} + \mathscr{Y}^{\,r-1}|,
\quad \forall \mathscr{X}, \mathscr{Y} \in \mathbb{R}, \, p \geq 1,
\end{align}and
\begin{align}\label{b00}
& b_1 b_2 \mathscr{X}^{2p+1} \mathscr{Y}
 - b_2^2 \mathscr{X}^{2p+2}
 - b_1^2 \mathscr{X}^{2p} \mathscr{Y}^2
 + b_2 b_1 \mathscr{X}^{2p+1} \mathscr{Y} \notag \\
=& 2 b_1 b_2 \mathscr{X}^{2p+1} \mathscr{Y}
 - \mathscr{X}^{2p} \big(b_2^2 \mathscr{X}^2 + b_1^2 \mathscr{Y}^2\big)
 \leq 0,
 \quad \forall \mathscr{X}, \mathscr{Y} \in \mathbb{R}, \, p \geq 1.
\end{align}According to \eqref{b0}, we have 
\begin{align}\label{b4}
& \|F(u_1, v_1) - F(u_2, v_2)\|^2 \leq c_1(n) \left( \|u_1 - u_2\|^2 + \|v_1 - v_2\|^2 \right), \nonumber \\
& |\langle F(u_1, v_1) - F(u_2, v_2), u_1 - u_2 \rangle| \leq c_1(n) \left( \|u_1 - u_2\|^2 + \|v_1 - v_2\|^2 \right), \notag \\
& |\langle F(u_1, v_1) - F(u_2, v_2), v_1 - v_2 \rangle| \leq c_1(n) \left( \|u_1 - u_2\|^2 + \|v_1 - v_2\|^2 \right),
\end{align}and
\begin{align}\label{b5}
& \|G(u) - G(v)\|^2 \leq c_2(n) \|u - v\|^2, \nonumber \\
& |\langle G(u_1) - G(u_2), u_1 - u_2 \rangle| \leq c_2(n) \|u_1 - u_2\|^2, \notag \\
& |\langle G(u_1) - G(u_2), v_1 - v_2 \rangle| \leq c_2(n) \left( \|u_1 - u_2\|^2 + \|v_1 - v_2\|^2 \right),
\end{align}where $u$, $v$, $u_1$, $u_2$, $v_1$, $v_2 \in \ell^2$, and $\|u\| \leq n$, $\|u_1\| \leq n$, $\|u_2\| \leq n$, $\|v\| \leq n$, $\|v_1\| \leq n$, $\|v_2\| \leq n$, see \cite{40} for the proof.  For any s $\in\mathbb{R}$
and $k\in\mathbb{N}$, we assume that there exist  $\alpha>0$ such that
\begin{align}\label{yhb1}
|\sigma_{k,i}(t,s)|\vee \int_{|y_k|<1}|q_{k,i}(t,s)|\nu_k\mathrm{d}y_k\leq\alpha\delta_{k,i}(t)(1+|s|).
\end{align}Indeed, for $C_k(n) > 0$, by \eqref{yhb1} and $\sigma_k$, $q_k$ are locally Lipschits continuous we have
\begin{align}\label{b6}
\sum_{k=1}^\infty \|\sigma_k(t,u_1) - \sigma_k(t,u_2)\|^2 \vee
\sum_{k=1}^\infty \int_{|y_k|<1} \|q_k(t,u_1, y_k) - q_k(t,u_2, y_k)\|^2 \nu_k(\mathrm{d}y_k)
\leq C_k(n) \|u_1 - u_2\|^2,
\end{align}
\begin{align}\label{b7}
\sum_{k=1}^\infty \|\sigma_k(t,u)\|^2 \vee \sum_{k=1}^\infty \int_{|y_k|<1} \|q_k(t,u, y_k)\|^2 \nu_k(\mathrm{d}y_k)
\leq 2\alpha^2 \|\delta(t)\|^2 (1 + \|u\|^2),
\end{align}where $\|\delta\|^2 := \sum\limits_{k \in \mathbb{N}} \sum\limits_{i \in \mathbb{Z}} |\delta_{k,i}|^2 $. In addition, we assume that
\begin{align}\label{bc0}
&\int_{-\infty}^{\tau } e^{\varpi t}\mathbb{E} \left( \|\delta(t)\|^2 + \sum_{k=1}^\infty \|\kappa_k(t)\|^2 + \sum_{k=1}^\infty \|h_k(t)\|^2 \right) \mathrm{d}t\notag\\
&\quad+\int_{-\infty}^{\tau }e^{\varpi t}\left(\|f_1(t)\|^2 + \|f_2(t)\|^2\right)\,\mathrm{d}t < \infty,\ \ \forall\tau \in \mathbb{R},
\end{align}where $\varpi>0$, see \eqref{xs}.

To express the abstract system \eqref{a1}-\eqref{a2} more concisely in the space $\ell^2 \times \ell^2$ (for $t \geq \tau$, $t, \tau \in \mathbb{R}$), we use the following system of equations:
\begin{align}\label{b8}
\left\{
\begin{aligned}
\mathrm{d}u(t) &= \left( -d_1 A u(t) - a_1 u(t) + b_1 F(u(t), v(t)) - b_2 G(u(t)) + f_1(t) \right) \mathrm{d}t \\
&\quad + \epsilon_1 \sum_{k=1}^\infty \left( h_k(t) + \sigma_k(t,u(t)) \right) \mathrm{d}W_k(t) \\
&\quad + \epsilon_2 \sum_{k=1}^\infty \left( \kappa_k(t) + q_k(t,u(t-), y_k) \right) \tilde{L}_k(\mathrm{d}t, \mathrm{d}y_k), \\
\mathrm{d}v(t) &= \left( -d_2 A v(t) - a_2 v(t) - b_1 F(u(t), v(t)) + b_2 G(u(t)) + f_2(t) \right) \mathrm{d}t \\
&\quad + \gamma_1 \sum_{k=1}^\infty \left( h_k(t) + \sigma_k(t,v(t)) \right) \mathrm{d}W_k(t) \\
&\quad + \gamma_2 \sum_{k=1}^\infty \left( \kappa_k(t) + q_k(t,v(t-), y_k) \right) \tilde{L}_k(\mathrm{d}t, \mathrm{d}y_k),
\end{aligned}
\right.
\end{align}
with the initial conditions
\begin{align}\label{bc1}
u(\tau) = u_\tau, \quad v(\tau) = v_\tau.
\end{align}As mentioned in reference \cite{LGF}, an $\ell^2\times\ell^2$-valued $c\grave{a}dl\grave{a}g$ process $\varphi(t)=(u(t),v(t))$ is
called a solution of system \eqref{b8}-\eqref{bc1} if $\mathcal{F}_{\tau}$-measurable initial conditions $\varphi(\tau)=(u_\tau,v_\tau)\in L^2(\Omega,\ell^2\times\ell^2)$ for any $t>\tau$ and for almost surely $\omega\in\Omega$,
\begin{align}\label{b9}
\left\{
\begin{aligned}
u(t)&=u_\tau+\int_\tau^t\big(-d_1Au(s)-a_1u(s)+b_1F(u(s),v(s))-b_2G(u(s))+f_1(s) \big)\mathrm{d}s\\
&\quad+\epsilon_1 \sum_{k=1}^\infty\int_\tau^t\big( h_k(s)+\sigma_k(t,u(s))\big)\mathrm{d}W_k(s)\\
&\quad+\epsilon_2\sum_{k=1}^\infty\int_\tau^t\int_{|y_k|<1}\big(\kappa_k(s)
+q_k(t,u(s-),y_k)\big)\tilde{L}_k(\mathrm{d}s,\mathrm{d}y_k),\\
v(t)&=v_\tau+\int_\tau^t\big(-d_2Av(s)-a_2v(s)-b_1F(u(s),v(s))+b_2G(u(s))+f_2(s) \big)\mathrm{d}s\\
&\quad+\gamma_1 \sum_{k=1}^\infty\int_\tau^t\big( h_k(s)+\sigma_k(t,v(s))\big)\mathrm{d}W_k(s)\\
&\quad+\gamma_2\sum_{k=1}^\infty\int_\tau^t\int_{|y_k|<1}\big(\kappa_k(s)
+q_k(t,v(s-),y_k)\big)\tilde{L}_k(\mathrm{d}s,\mathrm{d}y_k).\\
\end{aligned}
\right.
\end{align}\textbf{\textcolor{red}{Therefore, }}under assumptions \eqref{b6}-\eqref{bc0}, the existence and uniqueness of solutions to system \eqref{b8}-\eqref{bc1} can be obtained.
\section{Pullback measure attractors} 

For clarity, we first review the fundamental concepts of pullback measure attractors and then outline their structural properties(see, e.g., \cite{LID1}).

We denote by $X$ a separable Banach space with norm $\|\cdot\|_X$. Let $C_b(X)$ be
the space of bounded continuous functions $\phi : X \to \mathbb{R}$ endowed with the norm
\begin{align*}
\|\phi\|_\infty = \sup_{x \in X} |\phi(x)|.
\end{align*}Denote by $L_b(X)$ the space of bounded Lipschitz functions on $X$ which consists of all functions $\phi \in C_b(X)$ such that
\begin{align*}
\mathrm{Lip}(\phi) := \sup_{x_1, x_2 \in X, x_1 \neq x_2} \frac{|\phi(x_1) - \phi(x_2)|}{\|x_1 - x_2\|_X} < \infty.
\end{align*}The space $L_b(X)$ is endowed with the norm
\begin{align*}
\|\phi\|_L = \|\phi\|_\infty + \mathrm{Lip}(\phi).
\end{align*}Let $\mathcal{P}(X)$ be the set of probability measures on $(X, \mathcal{B}(X))$, where $\mathcal{B}(X)$ is the Borel $\sigma$-algebra
of $X$. Given $\phi \in C_b(X)$ and $\mu \in \mathcal{P}(X)$, we write
\begin{align*}
\langle\phi, \mu\rangle = \int_X \phi(x) \mu(\mathrm{d}x).
\end{align*}Recall that a sequence $\{\mu_n\}_{n=1}^\infty \subset \mathcal{P}(X)$ is weakly convergent to $\mu \in \mathcal{P}(X)$ if for every $\phi \in C_b(X)$,
\begin{align*}
\lim_{n \to \infty} \langle\phi, \mu_n\rangle = \langle\phi, \mu\rangle.
\end{align*}Define a metric on $\mathcal{P}(X)$ by
\begin{align*}
d_{\mathcal{P}(X)} (\mu_1, \mu_2) = \sup_{\phi \in L_b(X), \|\phi\|_L \leq 1} |(\phi, \mu_1) - (\phi, \mu_2)|, \quad \forall \mu_1, \mu_2 \in \mathcal{P}(X).
\end{align*}Then $(\mathcal{P}(X), d_{\mathcal{P}(X)})$ is a polish space. Moreover, a sequence $\{\mu_n\}_{n=1}^\infty \subset \mathcal{P}(X)$ converges to $\mu$ in $(\mathcal{P}(X), d_{\mathcal{P}(X)})$ if and only if $\{\mu_n\}_{n=1}^\infty$ converges to $\mu$ weakly.

Given $p > 0$, let $\mathcal{P}_p(X)$ be the subset of $\mathcal{P}(X)$ as defined by
\begin{align*}
\mathcal{P}_p(X) = \left\{\mu \in \mathcal{P}(X) : \int_X \|x\|_X^p \mu(\mathrm{d}x) < \infty \right\}.
\end{align*}Then $(\mathcal{P}_p(X), d_{\mathcal{P}(X)})$ is also a metric space. Without confusion, we also denote $(\mathcal{P}_p(X), d_{\mathcal{P}(X)})$ by $(\mathcal{P}_p(X), d_{\mathcal{P}_p(X)})$. Given $r > 0$, denote by
\begin{align*}
B_{\mathcal{P}_p(X)}(r) = \left\{\mu \in \mathcal{P}_p(X) : \left(\int_X \|x\|_X^p \mu(\mathrm{d}x)\right)^{\frac{1}{p}} \leq r \right\}.
\end{align*}Recall that the Hausdorff semi-metric between subsets of $\mathcal{P}_p(X)$ is given by
\begin{align*}
d_{\mathcal{P}_p(X)}(Y, Z) = \sup_{y \in Y} \inf_{z \in Z} d_{\mathcal{P}_p(X)}(y, z), \quad Y, Z \subset \mathcal{P}_p(X), \quad Y, Z \neq \emptyset.
\end{align*}If $\epsilon > 0$ and $B \subset \mathcal{P}_p(X)$, then the open $\epsilon$-neighborhood of $B$ in $\mathcal{P}_p(X)$ is defined by
\begin{align*}
\mathcal{N}_\epsilon(B) = \{\mu \in \mathcal{P}_p(X) : d_{\mathcal{P}_p(X)}(\mu, B) < \epsilon\}.
\end{align*}
\begin{definition}A family $\mathcal{S} = \{S(t, \tau) : t \in \mathbb{R}^+, \tau \in \mathbb{R}\}$ of mappings from $\mathcal{P}_p(X)$ to $\mathcal{P}_p(X)$ is
called a continuous non-autonomous dynamical system on $\mathcal{P}_p(X)$, if for all $\tau \in \mathbb{R}$ and $t, s \in \mathbb{R}^+$,
the following conditions are satisfied:
\begin{enumerate}[(a)]
  \item  $S(0, \tau) = I_{\mathcal{P}_p(X)}$, $\text{where } I_{\mathcal{P}_p(X)} \text{ is the identity operator on } \mathcal{P}_p(X);$ \\
  \item  $S(t + s, \tau) = S(t, s + \tau) \circ S(s, \tau)$; \\
  \item   $S(t, \tau) : \mathcal{P}_p(X) \to \mathcal{P}_p(X) \text{ is continuous}$.
\end{enumerate}
\end{definition}
\begin{definition}A set $D \subset \mathcal{P}_p(X)$ is called a bounded subset if there is $r > 0$ such that $D \subset B_{\mathcal{P}_p(X)}(r)$.
\end{definition}

In the sequel, we denote by $\mathcal{D}$ a collection of some families of nonempty subsets of $\mathcal{P}_p(X)$
parametrized by $\tau \in \mathbb{R}$; that is,
\begin{align*}
\mathcal{D} = \{D = \{D(\tau) \subset \mathcal{P}_p(X) : D(\tau) \neq \emptyset, \tau \in \mathbb{R}\} : D \text{ satisfies some conditions}\}.
\end{align*}
\begin{definition}A collection $\mathcal{D}$ of some families of nonempty subsets of $\mathcal{P}_p(X)$ is said to be neighborhood-closed if for each $D = \{D(\tau) : \tau \in \mathbb{R}\} \in \mathcal{D}$, there exists a positive number $\epsilon$
depending on $D$ such that the family
\begin{align*}
\{B(\tau) : B(\tau) \text{ is a nonempty subset of } \mathcal{N}_\epsilon(D(\tau)), \, \forall \tau \in \mathbb{R}\}
\end{align*}also belongs to $\mathcal{D}$.
\end{definition}
Note that the neighborhood closedness of $\mathcal{D}$ implies for each $D \in \mathcal{D}$,
\begin{align}\label{z20}
\tilde{D} = \{\tilde{D}(\tau) : \tilde{D}(\tau) \neq \emptyset \,\  \text{and}\  \, \tilde{D}(\tau) \subseteq D(\tau), \, \tau \in \mathbb{R}\} \in \mathcal{D}.
\end{align}A collection $\mathcal{D}$ satisfying \eqref{z20} is said to be inclusion-closed in the literature.
\begin{definition}
A family $K = \{K(\tau) : \tau \in \mathbb{R}\} \in \mathcal{D}$ is called a $\mathcal{D}$-pullback absorbing set for $S$ if for each $\tau \in \mathbb{R}$ and every $D \in \mathcal{D}$, there exists $T = T(\tau, D) > 0$ such that
\begin{align*}
S(t, \tau - t) D(\tau - t) \subset K(\tau), \quad \text{for all }\  t \geq T.
\end{align*}
\end{definition}
\begin{definition}The non-autonomous dynamical system $S$ is said to be $\mathcal{D}$-pullback asymptotically compact in $\mathcal{P}_p(X)$
if for each $\tau \in \mathbb{R}$, $\{S(t_n, \tau - t_n) \mu_n\}_{n=1}^\infty$ has a convergent subsequence in $\mathcal{P}_p(X)$ whenever $t_n \to \infty$
and $\mu_n \in D(\tau - t_n)$ with $D \in \mathcal{D}$.
\end{definition}
\begin{definition}
A family $\mathcal{A} = \{\mathcal{A}(\tau) : \tau \in \mathbb{R}\} \in \mathcal{D}$ is called a $\mathcal{D}$-pullback measure attractor for $S$
if the following conditions are satisfied:
\begin{enumerate}[(i)]
  \item \, $\mathcal{A}(\tau) \text{ is compact in } \mathcal{P}_p(X) \text{ for each } \tau \in \mathbb{R}$;
  \item \,  $\mathcal{A} \text{ is invariant, that is, } S(t, \tau) A(\tau ) = A(\tau + t), \, \text{for all } \tau \in \mathbb{R} \text{ and } t \in \mathbb{R}^+$;
  \item \,  $\mathcal{A} \text{ attracts every set in } \mathcal{D}, \text{ that is, for each } D = \{D(\tau) : \tau \in \mathbb{R}\} \in \mathcal{D}$,
\begin{align}
 \lim_{t \to \infty} d(S(t, \tau - t) D(\tau - t), A(\tau)) = 0.
\end{align}
\end{enumerate}
\end{definition}

\begin{definition}
A mapping $\psi : \mathbb{R} \times \mathbb{R} \to \mathcal{P}_p(X)$ is called a complete orbit of $S$ if for every $s \in \mathbb{R}, t \in \mathbb{R}^+$ and $\tau \in \mathbb{R}$, the following holds:
\begin{align}
S(t, s + \tau) \psi(s, \tau) = \psi(t + s, \tau).
\end{align}
\end{definition}If, in addition, there exists $D = \{D(\tau) : \tau \in \mathbb{R}\} \in \mathcal{D}$ such that $\psi(t, \tau)$ belongs to $D(t + \tau)$ for
every $t \in \mathbb{R}$ and $\tau \in \mathbb{R}$, then $\psi$ is called a $\mathcal{D}$-complete orbit of $S$.
\begin{definition}
A mapping $\xi : \mathbb{R} \to \mathcal{P}_p(X)$ is called a complete solution of $S$ if for every $t \in \mathbb{R}^+$
and $\tau \in \mathbb{R}$, the following holds:
\begin{align*}
S(t, \tau) \xi(\tau) = \xi(t + \tau).
\end{align*}
If, in addition, there exists $D = \{D(\tau) : \tau \in \mathbb{R}\} \in \mathcal{D}$ such that $\xi(\tau)$ belongs to $D(\tau)$ for every
$\tau \in \mathbb{R}$, then $\xi$ is called a $\mathcal{D}$-complete solution of $S$.
\end{definition}
\begin{definition}
For each $D = \{D(\tau) : \tau \in \mathbb{R}\} \in \mathcal{D}$ and $\tau \in \mathbb{R}$, the pullback $\omega$-limit set of $D$ at $\tau$
is defined by
\begin{align*}
\omega(D, \tau) = \bigcap_{s \geq 0} \overline{\bigcup_{t \geq s} S(t, \tau - t) D(\tau - t)},
\end{align*}that is,
\begin{align*}
\omega(D, \tau) = \limsup_{t \to \infty} S(t, \tau - t) D(\tau - t).
\end{align*}
\end{definition}
\begin{align*}
\omega(D, \tau) = \left\{ \nu \in \mathcal{P}_p(X) : \text{there exist } t_n \to \infty, \, \mu_n \in D(\tau - t_n) \text{ such that }
\nu = \lim_{n \to \infty} S(t_n, \tau - t_n) \mu_n \right\}.
\end{align*}

Based on above notation, from Proposition 3.6 in \cite{16}, we have the following criterion for the
existence and uniqueness of $\mathcal{D}$-pullback measure attractors.

\begin{proposition}\label{h19}
Let $\mathcal{D}$ be a   collection of families of subsets of $\mathcal{P}_p(X)$
and $S$ be a continuous non-autonomous dynamical system on $\mathcal{P}_p(X)$. Then $S$ has a unique
$\mathcal{D}$-pullback measure attractor $\mathcal{A}$ in $\mathcal{P}_p(X)$ if $S$ has a closed $\mathcal{D}$-pullback absorbing
set $K \in \mathcal{D}$ and $S$ is $\mathcal{D}$-pullback asymptotically compact in $\mathcal{P}_p(X)$. The $\mathcal{D}$-pullback measure
attractor $\mathcal{A}$ is given by, for each $\tau \in \mathbb{R}$,
\begin{align*}
\mathcal{A}(\tau) = \omega(K, \tau) = \{\psi(0, \tau) : \psi \text{ is a } \mathcal{D}\text{-complete orbit of } S\} \\
= \{\xi(\tau) : \xi \text{ is a } \mathcal{D}\text{-complete solution of } S\}.
\end{align*}
\end{proposition}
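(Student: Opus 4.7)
The strategy is to verify directly that $\mathcal{A}(\tau):=\omega(K,\tau)$ satisfies the three defining properties of a $\mathcal{D}$-pullback measure attractor, then prove uniqueness, and finally establish the characterizations via $\mathcal{D}$-complete orbits and $\mathcal{D}$-complete solutions.

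First I would verify nonemptiness and compactness of $\mathcal{A}(\tau)$ in $(\mathcal{P}_p(X),d_{\mathcal{P}_p(X)})$. Since $K\in\mathcal{D}$, the $\mathcal{D}$-pullback asymptotic compactness of $S$ applied to any sequence $t_n\to\infty$ with $\mu_n\in K(\tau-t_n)$ produces convergent subsequences of $\{S(t_n,\tau-t_n)\mu_n\}$, so $\omega(K,\tau)\neq\emptyset$. Closedness of $\omega(K,\tau)$ is formal from its definition as a nested intersection of closures, while sequential compactness follows by a diagonal extraction: any $\{\nu_m\}\subset\omega(K,\tau)$ is realized as a double limit of $\{S(t_n^m,\tau-t_n^m)\mu_n^m\}$ with $\mu_n^m\in K(\tau-t_n^m)$, and a diagonal sequence $\{S(t_{n_m}^m,\tau-t_{n_m}^m)\mu_{n_m}^m\}$ again falls under the asymptotic compactness hypothesis.

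Next I would prove the invariance $S(t,\tau)\mathcal{A}(\tau)=\mathcal{A}(\tau+t)$. The inclusion $\subseteq$ is immediate from the continuity of $S(t,\tau)$ on $\mathcal{P}_p(X)$ (Definition 3.1(c)) and the semigroup property (Definition 3.1(b)): if $\nu=\lim_n S(t_n,\tau-t_n)\mu_n$, then $S(t,\tau)\nu=\lim_n S(t+t_n,(\tau+t)-(t+t_n))\mu_n\in\omega(K,\tau+t)$. The reverse inclusion is the only step that genuinely exploits pullback asymptotic compactness: for $\nu\in\mathcal{A}(\tau+t)$ written as $\nu=\lim_n S(s_n,\tau+t-s_n)\mu_n$ with $s_n\to\infty$ and $\mu_n\in K(\tau+t-s_n)$, decompose (for $s_n>t$) via the semigroup property as $S(s_n,\tau+t-s_n)\mu_n=S(t,\tau)\eta_n$, where $\eta_n:=S(s_n-t,\tau-(s_n-t))\mu_n$. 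Applying asymptotic compactness to $\{\eta_n\}$ extracts $\eta\in\omega(K,\tau)=\mathcal{A}(\tau)$, and continuity of $S(t,\tau)$ then gives $\nu=S(t,\tau)\eta$.

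For the attraction property, given $D\in\mathcal{D}$ I would choose $s_n\to\infty$ with $s_n\le t_n-T(\tau,D)$ for $n$ large, so that the absorbing property forces $\eta_n:=S(t_n-s_n,\tau-t_n)\mu_n\in K(\tau-s_n)$; then $S(t_n,\tau-t_n)\mu_n=S(s_n,\tau-s_n)\eta_n$ and any subsequential limit lies in $\omega(K,\tau)=\mathcal{A}(\tau)$. A standard contradiction argument then upgrades this to $d_{\mathcal{P}_p(X)}(S(t,\tau-t)D(\tau-t),\mathcal{A}(\tau))\to 0$. Membership $\mathcal{A}\in\mathcal{D}$ follows from $\mathcal{A}(\tau)\subset K(\tau)$ together with the neighborhood-closedness assumption. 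Uniqueness is the usual two-attractor argument: any other attractor $\mathcal{A}'$ is $S$-invariant and lies in $\mathcal{D}$, so attraction of $\mathcal{A}$ forces $\mathcal{A}'(\tau)=S(t,\tau-t)\mathcal{A}'(\tau-t)\subset\mathcal{N}_\varepsilon(\mathcal{A}(\tau))$ for every $\varepsilon>0$, and symmetry yields equality.

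Finally, for the complete-orbit/solution characterization, every $\mathcal{D}$-complete orbit $\psi$ satisfies $\psi(0,\tau)=S(t,\tau-t)\psi(-t,\tau)$ with $\psi(-t,\tau)\in D(\tau-t)$ for some $D\in\mathcal{D}$, so the attraction property places $\psi(0,\tau)$ in $\mathcal{A}(\tau)$. Conversely, given $\nu\in\mathcal{A}(\tau)$, I would repeatedly invoke the invariance proved in Step 2 to select $\nu_n\in\mathcal{A}(\tau-n)$ with $S(n,\tau-n)\nu_n=\nu$ for each $n\in\mathbb{N}$, and then define a $\mathcal{D}$-complete orbit through $\nu$ by setting $\psi(t,\tau-n):=S(t,\tau-n)\nu_n$ for $t\in[0,n]$ and gluing these pieces consistently using the semigroup law; the same construction produces a $\mathcal{D}$-complete solution $\xi$ with $\xi(\tau)=\nu$. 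The main obstacle I expect is the reverse inclusion in Step 2, since it is the single place where pullback asymptotic compactness must be invoked in a nontrivial way; every other step reduces either to continuity of $S$, to a straightforward use of the absorbing property, or to a formal gluing argument.
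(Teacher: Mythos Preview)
Your proof plan is correct and follows the standard route for abstract pullback attractor existence results. However, the paper does not actually prove this proposition: it is stated as a direct consequence of Proposition~3.6 in Wang~\cite{16}, with the remark ``Based on above notation, from Proposition 3.6 in \cite{16}, we have the following criterion\ldots'' immediately preceding it. So there is no proof in the paper to compare against; your outline essentially reconstructs the argument from the cited reference.
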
 To emphasize the role of the initial time and initial values, we denote by $\varphi(t, \tau,\xi )$ the solution of \eqref{b8}-\eqref{bc1} with initial conditions $\varphi(\tau) = \xi= (u_\tau,v_\tau )\in L_{\mathcal{F}_\tau}^2(\Omega, \ell^2\times\ell^2)$ and $\varphi\in L_{\mathcal{F}_\tau}^2(\Omega, \ell^2\times\ell^2)$. Give a subset $E$ of $\mathcal{P}_2(\ell^2\times\ell^2)$, define
\begin{align*}
\|E\|_{\mathcal{P}_2(\ell^2\times\ell^2)} = \inf \left\{ r > 0 : \sup_{\mu \in E} \left( \int_{\ell^2\times\ell^2} \|z\|_{\ell^2\times\ell^2}^2 \mu(dz) \right)^{\frac{1}{2}} \leq r \right\},
\end{align*}
with the convention that $\inf \emptyset = \infty$. If $E$ is a bounded subset of $\mathcal{P}_2(\ell^2\times\ell^2)$, then $\|E\|_{\mathcal{P}_2(\ell^2\times\ell^2)} < \infty$. Let $\mathcal{D}$ be the collection of families of bounded nonempty subsets of $\mathcal{P}_2(\ell^2\times\ell^2)$ as given by
\begin{align*}
\mathcal{D} = \left\{ D = \{D(\tau) \subset \mathcal{P}_2(\ell^2\times\ell^2) : \emptyset \neq D(\tau) \text{ bounded in } \mathcal{P}_2(\ell^2\times\ell^2), \, \tau \in \mathbb{R} \} \right\},
\end{align*}and
\begin{align}\label{temped}
\lim_{\tau \to -\infty} e^{\varpi \tau} \|D(\tau)\|_{\mathcal{P}_2(\ell^2\times\ell^2)}^2 = 0 ,
\end{align}
where $\varpi > 0$ defined later.
\section{Uniform moment estimates}
In this section, we derive uniform moment estimates of the solution of problem \eqref{b8}-\eqref{bc1} which are necessary for establishing the existence of pullback measure attractors. To this end, we use $\mathcal{L}(\varphi)$ to denote the distribution law of the random variable, and $C$ to represent any positive constants that may change from line to line. We further assume that
\begin{align}\label{xs}
\varpi = \inf_{t \in \mathbb{R}} \left\{ (a_1\wedge a_2)-\frac{1}{2}-4\alpha^2\|\delta(t)\|^2(b_2+b_1) \right\} > 0.
\end{align}Discuss uniform estimates of solutions of problem \eqref{b8}-\eqref{bc1} in $L^2(\Omega, \ell^2\times\ell^2)$.
\begin{lemma}\label{yingli41}
Suppose \eqref{b6}-\eqref{bc0} and \eqref{xs} hold. Then for every $\tau \in \mathbb{R}, t\geq0$ and random data $\xi=(u_\tau,v_\tau) \in L_{\mathcal{F}_{\tau - t}}^2(\Omega, \ell^2\times\ell^2)$, and $0 < \epsilon_1,\epsilon_2,\gamma_1,\gamma_2 \leq 1$, the solution $\varphi$ of \eqref{b8}-\eqref{bc1} satisfies
\begin{align}
\mathbb{E} \left( \|\varphi(\tau, \tau - t, \xi)\|^2 \right) + \int_{\tau - t}^\tau e^{-\varpi(\tau - s)} \mathbb{E} \left( \|\varphi(s, \tau - t, \xi))\|^2 \right) \mathrm{d}s \leq C\Big(e^{-\varpi t}\mathbb{E} \left(\|\xi\|^2_{\ell^2\times\ell^2}  \right)+R(\tau)\Big),
\end{align}
where
\begin{align}\label{op}
R(\tau)= &   \int_{-\infty}^\tau e^{-\varpi(\tau - s)} \left( \sum_{k=1}^\infty\|\kappa_k(s)\|^2+\sum_{k=1}^\infty\|h_k(s)\|^2+\|f_2(s)\|^2+\|f_1(s)\|^2 \right) \mathrm{d}s\notag\\
&+ \int_{-\infty}^\tau e^{-\varpi(\tau - s)} \|\delta(s)\|^2\,\mathrm{d}s
\end{align}
with $C > 0$ being a constant independent of $\tau$, $\epsilon_1,\epsilon_2,\gamma_1,$  and $ \gamma_2.$
\end{lemma}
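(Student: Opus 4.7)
\medskip

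\noindent\textbf{Proof proposal.} The plan is to derive a uniform energy estimate by applying the It\^{o} formula for L\'evy-driven SDEs to the weighted functional
\[
\Phi(t) := e^{\varpi(t-(\tau-t_0))}\bigl(b_2\|u(t)\|^2 + b_1\|v(t)\|^2\bigr),
\]
where the weights $b_2,b_1$ are chosen precisely so that the pair of oppositely signed nonlinearities $\pm b_1F(u,v)\mp b_2 G(u)$ appearing in the two coupled equations collapse into a manifestly non-positive quantity via inequality \eqref{b00}. Concretely, testing the $u$-equation by $b_2u$ and the $v$-equation by $b_1 v$ and summing, the contribution of the drift cubic terms equals, componentwise,
\[
2b_1b_2\,u_i^{2p+1}v_i - u_i^{2p}\bigl(b_2^2u_i^2+b_1^2v_i^2\bigr) = -u_i^{2p}\bigl(b_1v_i-b_2u_i\bigr)^2\leq 0,
\]
which is exactly \eqref{b00}. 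This is the way we bypass the absence of a unidirectional sign condition mentioned in the introduction; the linear operators contribute $-2d_1 b_2\langle Au,u\rangle-2d_2 b_1\langle Av,v\rangle\leq 0$ by \eqref{b2} and hence can be discarded, and the damping contributes $-2a_1b_2\|u\|^2-2a_2b_1\|v\|^2$.

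Next I would expand the L\'evy--It\^o differential of $\Phi(t)$. Besides the drift above, it produces (i) the Brownian quadratic variation $\epsilon_1^2 b_2\sum_k\|h_k+\sigma_k(t,u)\|^2 + \gamma_1^2 b_1\sum_k\|h_k+\sigma_k(t,v)\|^2$, (ii) the Poisson compensator term $\epsilon_2^2 b_2\sum_k\int_{|y_k|<1}\|\kappa_k+q_k(t,u,y_k)\|^2\nu_k(dy_k) + \gamma_2^2 b_1\sum_k\int\|\kappa_k+q_k(t,v,y_k)\|^2\nu_k(dy_k)$, and (iii) a Brownian martingale plus a compensated Poisson martingale that will vanish upon taking expectation (after a standard localization by stopping times). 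Using $(a+b)^2\le 2a^2+2b^2$, $0<\epsilon_i,\gamma_i\le 1$ and the growth bound \eqref{b7}, terms (i) and (ii) are dominated by
\[
8\alpha^2\|\delta(t)\|^2(b_1+b_2)\bigl(1+\|u\|^2+\|v\|^2\bigr) + 2(b_1+b_2)\Bigl(\sum_k\|h_k(t)\|^2+\sum_k\|\kappa_k(t)\|^2\Bigr).
\]
Finally, the inhomogeneous drift $b_2\langle u,f_1\rangle + b_1\langle v,f_2\rangle$ is controlled by Young's inequality as $\tfrac12(b_2\|u\|^2+b_1\|v\|^2)+\tfrac{b_2}{2}\|f_1\|^2+\tfrac{b_1}{2}\|f_2\|^2$, which is where the $-\tfrac12$ term in \eqref{xs} originates.

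Collecting all bounds, the drift of $\Phi(t)$ is majorized by
\[
\bigl[\varpi - 2(a_1\wedge a_2) + 1 + 8\alpha^2\|\delta(t)\|^2(b_1+b_2)\bigr]\bigl(b_2\|u\|^2+b_1\|v\|^2\bigr)
\;+\; e^{\varpi(t-\cdot)}\cdot(\text{forcing}),
\]
and by definition \eqref{xs} the bracket is $\leq 0$. Integrating over $[\tau-t,\tau]$, taking expectations (which annihilate the two martingale terms) and dividing by $e^{\varpi\tau}$ yields
\[
\mathbb{E}\|\varphi(\tau,\tau-t,\xi)\|^2 + \varpi\!\int_{\tau-t}^{\tau}\!e^{-\varpi(\tau-s)}\mathbb{E}\|\varphi(s,\tau-t,\xi)\|^2\,ds \le C\Bigl(e^{-\varpi t}\mathbb{E}\|\xi\|^2+R(\tau)\Bigr),
\]
with $R(\tau)$ precisely as in \eqref{op}, after noting $b_1+b_2$ and the bound \eqref{bc0} absorb into the generic constant $C$ that is independent of $\tau$ and the noise intensities.

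\emph{Expected main obstacle.} The bookkeeping in the It\^o formula for the compensated Poisson integral is the most delicate part: one must carefully separate the small-jump compensator (which produces the deterministic $\|q_k\|^2\nu_k(dy_k)$ contribution bounded through \eqref{b7}) from the compensated jump martingale (which is only a \emph{local} martingale), introduce stopping times $\tau_N=\inf\{t:\|\varphi(t)\|>N\}$, justify expectation using monotone/Fatou convergence as $N\to\infty$, and use the local Lipschitz bounds \eqref{b4}--\eqref{b5} to pass the cutoff since global solutions exist by \cite{LGF}. The choice of weights $(b_2,b_1)$ and the algebraic identity \eqref{b00} are what make the nonlinear coupling disappear, and all subsequent coefficients ($\tfrac12$, $4\alpha^2(b_1+b_2)$) in \eqref{xs} must then be matched exactly to close the argument with a non-positive dissipation coefficient.
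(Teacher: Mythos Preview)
Your proposal is correct and follows essentially the same route as the paper: apply It\^o's formula to the weighted energy $b_2\|u\|^2+b_1\|v\|^2$, use the algebraic identity \eqref{b00} to eliminate the coupled nonlinearities, bound the noise correctors via \eqref{b7}, absorb the forcing with Young's inequality, and integrate against $e^{\varpi s}$. Two small bookkeeping points to tighten: the forcing contribution from It\^o carries a factor $2$ (so Young produces $+1$, which becomes the $-\tfrac12$ in \eqref{xs} at the $\varpi$ scale), and your bracket is in fact $\le -\varpi$ rather than merely $\le 0$, which is exactly what yields the weighted time-integral on the left-hand side of your final inequality; the paper glosses over the stopping-time localization you correctly flag.
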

\begin{proof}
By \eqref{b8} and the It\^{o} formula, we obtain
\begin{align}\label{51}
&\mathrm{d}\Big[b_2\|u(s)\|^2\Big]+\Big[2d_1b_2\|Bu(s)\|^2
+2a_1b_2\|u(s)\|^2+2b^2_2\langle G(u(s)),u(s)\rangle
\Big]\,\mathrm{d}s\notag\\
=&2b_2\langle f_1(s),u(s)\rangle\,\mathrm{d}s
+2b_1b_2
\langle F(u(s),v(s)),u(s)\rangle\,\mathrm{d}s\notag\\
&+b_2\epsilon_1^2\sum_{k=1}^{\infty}
\|h_k(s)+\sigma_k(t,u(s))\|^2\,\mathrm{d}s+2b_2\epsilon_1\sum_{k=1}^{\infty}
\langle
h_k(s)+\sigma_k(t,u(s)),u(s)
\rangle\, \mathrm{d}W_k(s)\notag\\
&+b_2\sum_{k=1}^{\infty}\int_{|y_k|<1}\Big( \|
u(s-)+\epsilon_2(q_k(t,u(s-),y_k)+\kappa_k(s))
\|^2-\|u(s-)\|^2
\Big)\tilde{L}_k(\mathrm{d}s,\mathrm{d}y_k)\notag\\
&+b_2\epsilon_2^2\sum_{k=1}^{\infty}\int_{|y_k|<1}
\|q_k(t,u(s),y_k)+\kappa_k(s)\|^2\nu_k(\mathrm{d}y_k)\,\mathrm{d}s,
\end{align}and
\begin{align}\label{b52}
&\mathrm{d}\,\Big[b_1\|v(s)\|^2\Big]+\Big[2d_2b_1\|Bv(s)
\|^2+2a_2b_1\|v(s)\|^2+2b_1^2\big(F(u(s),v(s)),
v(s)\big)\Big]\,\mathrm{d}s\nonumber\\
=&2b_1\langle f_2(s),v_n(s)\rangle\,\mathrm{d}s +2b_2b_1\big\langle G^n(u_n(s)),v_{n}(s)\big\rangle\,\mathrm{d}s
\nonumber\\
&+b_1\gamma^2_1\sum_{k=1}^\infty \|h_k(s)+\sigma_k(t,v(s))\|^2\,\mathrm{d}s
+2b_1\gamma_1\sum_{k=1}^\infty\big\langle h_k(s)+\sigma_k(t,v(s)),v(s)\big\rangle \mathrm{d}W_k(s)\notag\\
&+b_1\sum_{k=1}^{\infty}\int_{|y_k|<1}\Big(
\|
v(s-)+\gamma_2(q_k(t,v(s-),y_k)+\kappa_k(s))
\|^2-\|v(s-)\|^2
\Big)\tilde{L}_k(\mathrm{d}s,\mathrm{d}y_k)\notag\\
&+b_1\gamma_2^2\sum_{k=1}^{\infty}\int_{|y_k|<1}
\|q_k(t,v(s),y_k)+\kappa_k(s)\|^2\nu_k(\mathrm{d}y_k)\,\mathrm{d}s.
\end{align}Taking expectation of \eqref{51}+\eqref{b52}, we get
\begin{align}\label{b53}
&\frac{\mathrm{d}}{\mathrm{d}s}\,\mathbb{E}\Big[\|b_2u(s)\|^2+\|b_1v(s)\|^2\Big]=
\mathbb{E}\Big[-2d_1b_2\|Bu(s)\|^2-2a_1b_2\|u(s)\|^2-2b^2_2\langle G(u(s)),u(s)\rangle
\Big]\notag\\
&\quad\quad\quad+\mathbb{E}\Big[-2d_2b_1\|Bv(s)
\|^2-2a_2b_1\|v(s)\|^2-2b_1^2\big(F(u(s),v(s)),
v(s)\big)\Big]\notag\\
&\quad\quad\quad+2b_2\mathbb{E}\Big[\langle f_1(s),u(s)\rangle\Big]
+2b_1b_2\mathbb{E}\Big[\langle F(u(s),v(s)),u(s)\rangle\Big]\notag\\
&\quad\quad\quad+b_2\epsilon_1^2\sum_{k=1}^{\infty}
\mathbb{E}\Big[\|h_k(s)+\sigma_k(t,u(s))\|^2\Big]+b_1\gamma^2_1\sum_{k=1}^\infty
\mathbb{E}\Big[\|h_k(s)+\sigma_k(t,v(s))\|^2\Big]\notag\\
&\quad\quad\quad+2b_1\mathbb{E}\Big[\langle f_2(s),v_n(s)\rangle\Big] +2b_2b_1\mathbb{E}\Big[\langle G(u(s)),v(s)\rangle\Big]\notag\\
&\quad\quad\quad+b_1\gamma_2^2\sum_{k=1}^{\infty}\int_{|y_k|<1}\mathbb{E}\Big[
\|q_k(t,v(s),y_k)+\kappa_k(s)\|^2\nu_k(\mathrm{d}y_k)\Big]\notag\\
&\quad\quad\quad+b_2\epsilon_2^2\sum_{k=1}^{\infty}\int_{|y_k|<1}\mathbb{E}\Big[
\|q_k(t,u(s),y_k)+\kappa_k(s)\|^2\nu_k(\mathrm{d}y_k)\Big].
\end{align}By employing inequalities \eqref{b2}-\eqref{b00},  we deduce that
\begin{align*}
b_2 b_1 \langle F(u,v), u \rangle - b_2^2 \langle G(u), u \rangle - b_1^2 \langle F(u,v), v \rangle + b_2 b_1 \langle G(u), v \rangle \leq 0.
\end{align*}\textbf{\textcolor{red}{And}}
\begin{small}\begin{align}\label{b61}
&\frac{\mathrm{d}}{\mathrm{d}s}\mathbb{E}\Big[b_1\|v(s)\|^2+b_2\|u(s)\|^2\Big]\notag\\
\leq&-2\varsigma\mathbb{E}\Big(b_2\|u(s)\|^2+b_1\|v(s)\|^2\Big)+
\Bigl(4b_2\epsilon_1^2+4b_1\gamma_1^2+4b_2\epsilon_2^2+4b_1\gamma_2^2\Bigl)\alpha^2\|\delta(s)\|^2\notag\\
&+(1+2b_2\epsilon_1^2+
2b_1\gamma_1^2+2b_1\gamma_2^2+2b_2\epsilon_2^2)
\mathbb{E}\Big[\sum_{k=1}^\infty\|\kappa_k(s)\|^2+\sum_{k=1}^\infty\|h_k(s)\|^2+b_1\|f_2(s)\|^2+b_2\|f_1(s)\|^2\Big]\notag\\
&+\big(1+4\alpha^2\|\delta\|^2(b_2\epsilon_1^2+b_2\epsilon_2^2+b_1\gamma_1^2+b_1\gamma_2^2)\big)
\mathbb{E}\Big[b_2\|u(s)\|^2+b_1\|v(s)\|^2\Big]\textbf{\textcolor{red}{,}}
\end{align}\end{small}where $\varsigma=a_1\wedge a_2$. Utilizing the aforementioned inequality, we have
\begin{small}\begin{align}\label{b61}
&\frac{\mathrm{d}}{\mathrm{d}s}\mathbb{E}\Big[b_1\|v(s)\|^2+b_2\|u(s)\|^2\Big]\notag\\
\leq&-2\varpi\mathbb{E}\Big(b_2\|u(s)\|^2+b_1\|v(s)\|^2\Big)+
\Bigl(4b_2\epsilon_1^2+4b_1\gamma_1^2+4b_2\epsilon_2^2+4b_1\gamma_2^2\Bigl)\alpha^2\|\delta(s)\|^2\notag\\
&+(1+2b_2\epsilon_1^2+
2b_1\gamma_1^2+2b_1\gamma_2^2+2b_2\epsilon_2^2)
\mathbb{E}\Big[\sum_{k=1}^\infty\|\kappa_k(s)\|^2+\sum_{k=1}^\infty\|h_k(s)\|^2+b_1\|f_2(s)\|^2+b_2\|f_1(s)\|^2\Big].
\end{align}\end{small}Multiplying \eqref{b61} by $e^{\varpi t}$ and then integrating the resulting inequality on $(\tau - t, \tau)$ with $t \in \mathbb{R}^+$, we obtain
\begin{align}\label{nl}
&\mathbb{E} \left( b_2\|u(\tau, \tau - t, u_\tau)\|^2+b_1\|v(\tau, \tau - t, v_\tau)\|^2 \right)\notag\\
&+ \varpi \int_{\tau - t}^\tau e^{-\varpi(\tau - s)} \mathbb{E} \left( b_2\|u(s, \tau - t, u_\tau)\|^2+b_1\|v(s, \tau - t, v_\tau)\|^2 \right) \mathrm{d}s \nonumber \\
\leq &\mathbb{E} \left(\|\xi\|^2_{\ell^2\times\ell^2} \right) e^{-\varpi t} + C \int_{\tau - t}^\tau e^{-\varpi(\tau - s)} \left( \sum_{k=1}^\infty\|\kappa_k(s)\|^2+\sum_{k=1}^\infty\|h_k(s)\|^2+b_1\|f_2(s)\|^2+b_2\|f_1(s)\|^2 \right) \mathrm{d}s\notag\\
&+\Bigl(4b_2\epsilon_1^2+4b_1\gamma_1^2+4b_2\epsilon_2^2+4b_1\gamma_2^2\Bigl)\alpha^2
\int_{\tau-t}^\tau e^{-\varpi(\tau - s)}\|\delta(s)\|^2\,\mathrm{d}s\notag\\
\leq& \mathbb{E} \left(\|\xi\|^2_{\ell^2\times\ell^2} \right) e^{-\varpi t} + C \int_{-\infty}^\tau e^{-\varpi(\tau - s)} \left( \sum_{k=1}^\infty\|\kappa_k(s)\|^2+\sum_{k=1}^\infty\|h_k(s)\|^2+b_1\|f_2(s)\|^2+b_2\|f_1(s)\|^2 \right) \mathrm{d}s\notag\\
&+\Bigl(4b_2\epsilon_1^2+4b_1\gamma_1^2+4b_2\epsilon_2^2+4b_1\gamma_2^2\Bigl)\alpha^2
\int_{-\infty}^\tau e^{-\varpi(\tau - s)}\|\delta(s)\|^2\,\mathrm{d}s,
\end{align} which   concludes the proof.
\end{proof}
Next, we derive uniform estimates on the tails of the solutions of \eqref{b8}-\eqref{bc1} which are crucial for establishing the $\mathcal{D}$-pullback asymptotic compactness in $\mathcal{P}_2(\ell^2\times\ell^2)$ of the family of probability distributions of the solutions.
\begin{lemma}\label{z42}
Suppose \eqref{b6}-\eqref{bc0} and \eqref{xs} hold. Then for every $\eta > 0$  and $\tau \in \mathbb{R}$, there exists  $N = N(  \tau, \eta) \in \mathbb{N}$ such that for all $0 < \epsilon_1,\epsilon_2,\gamma_1,\gamma_2 \leq 1$, $t \geq 0$ and $n \geq N$, the solution $\varphi$ of \eqref{b8}-\eqref{bc1} satisfies
\begin{align}\label{weba1}
\sum_{|i| \geq n} \mathbb{E}(\left|u_i(\tau, \tau - t, u_\tau)\right|^2+\left|v_i(\tau, \tau - t, v_\tau)\right|^2) \leq  Ce^{-\varpi t}\mathbb{E} \left(\|\xi\|^2_{\ell^2\times\ell^2} \right)+ \eta,
\end{align}
when $\xi=( u_\tau, v_\tau) \in L_{\mathcal{F}_{\tau - t}}^2(\Omega, \ell^2\times\ell^2)$.
\end{lemma}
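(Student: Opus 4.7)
The plan is to use a weighted energy argument with a smooth spatial cutoff, adapting the calculation of Lemma \ref{yingli41} to localize away from the origin. Fix a smooth function $\rho : \mathbb{R}^+ \to [0,1]$ with $\rho(s) = 0$ for $s \in [0,1]$, $\rho(s) = 1$ for $s \geq 2$, and $0 \leq \rho'(s) \leq 2$; for each $n \in \mathbb{N}$, set $\rho_n(i) = \rho(|i|/n)$. I will apply the It\^o formula for c\`adl\`ag semimartingales to the weighted functional
$$E_n(s) := b_2 \sum_{i \in \mathbb{Z}} \rho_n(i) |u_i(s)|^2 + b_1 \sum_{i \in \mathbb{Z}} \rho_n(i) |v_i(s)|^2,$$
where $\varphi(s) = (u(s), v(s))$ solves \eqref{b8}-\eqref{bc1}. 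The key observation is that the pointwise algebraic identity \eqref{b00} holds site-by-site, so multiplying by the nonnegative weights $\rho_n(i)$ preserves the cancellation of the two oppositely signed pairs $\pm F(u,v)$ and $\pm G(u)$ exactly as in the full-norm estimate.

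For the linear diffusion, I will use summation by parts, rewriting $\sum_i \rho_n(i)(Au)_i u_i$ to extract the nonnegative dissipation $d_1 \sum_i \rho_n(i)|Bu|_i^2$ plus a remainder of order $O(1/n)\,\|u\|^2$ coming from the discrete derivative $|\rho_n(i+1) - \rho_n(i)| \leq C/n$, and analogously for $v$. For the noise contribution, taking expectations kills the martingale parts of both the Brownian and compensated Poisson stochastic integrals, leaving only the compensator terms. Under \eqref{yhb1}, the weighted noise energy splits into a part controlled by $\sum_{|i|\geq n}\delta_{k,i}^2(1 + |u_i|^2 + |v_i|^2)$ (absorbed into $\varpi$ via \eqref{xs}) and a nonrandom-data tail. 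Collecting everything yields
$$\frac{d}{ds}\mathbb{E}[E_n(s)] \leq -2\varpi\, \mathbb{E}[E_n(s)] + \frac{C}{n}\,\mathbb{E}[\|\varphi(s)\|^2] + C\,\mathcal{T}_n(s),$$
where $\mathcal{T}_n(s) := \sum_{|i|\geq n}\bigl(\sum_k |h_{k,i}(s)|^2 + \sum_k |\kappa_{k,i}(s)|^2 + |f_{1,i}(s)|^2 + |f_{2,i}(s)|^2\bigr) + \sum_{|i|\geq n}|\delta_{\cdot,i}(s)|^2$ is the tail of the nonrandom data.

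Multiplying by $e^{\varpi s}$ and integrating over $(\tau-t, \tau)$ gives
$$\mathbb{E}[E_n(\tau)] \leq e^{-\varpi t} \mathbb{E}[E_n(\tau-t)] + \frac{C}{n}\int_{\tau-t}^\tau e^{-\varpi(\tau-s)}\mathbb{E}[\|\varphi(s)\|^2]\,ds + C\int_{\tau-t}^\tau e^{-\varpi(\tau-s)}\mathcal{T}_n(s)\,ds.$$
Since $\rho_n \leq 1$, the first term is at most $Ce^{-\varpi t}\mathbb{E}\|\xi\|^2_{\ell^2\times\ell^2}$. By Lemma \ref{yingli41} the second term is bounded by $\frac{C}{n}(e^{-\varpi t}\mathbb{E}\|\xi\|^2_{\ell^2\times\ell^2} + R(\tau))$, which is $O(1/n)$ at fixed $\tau$. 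The third term tends to zero as $n \to \infty$ by the dominated convergence theorem, using the $\ell^2$-summability of $h_k, \kappa_k, f_1, f_2, \delta$ together with the integrability provided by \eqref{bc0}. Given $\eta > 0$, I choose $N = N(\tau,\eta)$ large enough that both the $1/n$ term and the tail-data term are below $\eta/2$, which establishes \eqref{weba1}.

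The main technical obstacle is the jump term in the It\^o formula: the expansion
$$\sum_i \rho_n(i)\bigl(|u_i(s-) + \epsilon_2(q_{k,i}+\kappa_{k,i})|^2 - |u_i(s-)|^2\bigr)$$
must be split into the linear-in-jump piece $2\epsilon_2\rho_n(i) u_i(s-)(q_{k,i}+\kappa_{k,i})$, which combines with $\widetilde{L}_k$ into a mean-zero martingale, and the quadratic-in-jump compensator, which has to be controlled via \eqref{yhb1} and the site-by-site bound $|q_{k,i}(t,u_i,y_k)|\leq \alpha\delta_{k,i}(t)(1+|u_i|)$ so that the resulting $4\alpha^2\|\delta\|^2(b_2+b_1)$ factor is absorbed precisely by the gap in \eqref{xs}. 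This is exactly where the tightness of the dissipation hypothesis is used, and where the Lévy setting genuinely differs from the purely Brownian case.
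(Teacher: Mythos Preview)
Your proposal is correct and follows essentially the same approach as the paper: a weighted energy estimate using a smooth cutoff $\rho_n$ (the paper uses $\theta_n^2$), site-by-site cancellation of the coupled nonlinearities via \eqref{b00}, summation-by-parts for the discrete Laplacian yielding an $O(1/n)\|\varphi\|^2$ error, taking expectations to kill the Brownian and compensated-Poisson martingales, and then integrating against $e^{\varpi s}$ and invoking Lemma~\ref{yingli41} together with \eqref{bc0} to make the residual terms smaller than $\eta$. The only cosmetic difference is that the paper works with $\theta_n u$ as an $\ell^2$ element and computes $\|\theta_n u\|^2$, whereas you sum $\rho_n(i)|u_i|^2$ directly; both lead to the same differential inequality.
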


\begin{proof}
Let $\theta : \mathbb{R} \to \mathbb{R}$ be a smooth function such that $0 \leq \theta(s) \leq 1$ for all $s \in \mathbb{R}$ and
\begin{align*}
\theta(s) = 0, \quad \text{for } |s| \leq 1, \quad \text{and } \theta(s) = 1, \quad \text{for } |s| \geq 2.
\end{align*}
Given $n \in \mathbb{N}$, denote by $\theta_n = (\theta(\frac{i}{n}))_{i \in \mathbb{Z}}$ and $\theta_n u = (\theta(\frac{i}{n}) u_i)_{i \in \mathbb{Z}}$, $\theta_n v = (\theta(\frac{i}{n}) v_i)_{i \in \mathbb{Z}}$ for $u = (u_i)_{i \in \mathbb{Z}}$, $v = (v_i)_{i \in \mathbb{Z}}$. By \eqref{b8}, we obtain
\begin{align}\label{z22}
\left\{
\begin{aligned}
\mathrm{d}\theta_nu(t) &= \left( -d_1\theta_n A u(t) - a_1\theta_n u(t) + b_1\theta_n F(u(t), v(t)) - b_2\theta_n G(u(t)) + \theta_nf_1(t) \right) \mathrm{d}t \\
&\quad + \epsilon_1 \sum_{k=1}^\infty \left( \theta_nh_k(t) + \theta_n\sigma_k(t,u(t)) \right) \mathrm{d}W_k(t) \\
&\quad + \epsilon_2 \sum_{k=1}^\infty \left( \theta_n\kappa_k(t) + \theta_nq_k(t,u(t-), y_k) \right) \tilde{L}_k(\mathrm{d}t, \mathrm{d}y_k), \\
\mathrm{d}\theta_nv(t) &= \left( -d_2\theta_n A v(t) - a_2\theta_n v(t) - b_1\theta_n F(u(t), v(t)) + b_2\theta_n G(u(t)) + \theta_nf_2(t) \right) \mathrm{d}t \\
&\quad + \gamma_1 \sum_{k=1}^\infty \left( \theta_nh_k(t) + \theta_n\sigma_k(t,v(t)) \right) \mathrm{d}W_k(t) \\
&\quad + \gamma_2 \sum_{k=1}^\infty \left( \theta_n\kappa_k(t) + \theta_nq_k(t,v(t-), y_k) \right) \tilde{L}_k(\mathrm{d}t, \mathrm{d}y_k),
\end{aligned}
\right.
\end{align}From equation \eqref{z22} and It\^{o}'s formula, we respectively obtain the following results
\begin{align}\label{z23}
&\mathrm{d}\Big[b_2\|\theta_nu(s)\|^2\Big]+\Big[2d_1b_2\left\langle Bu(t),B\left(\theta_n^2u(t)\right)\right\rangle
+2a_1b_2\|\theta_nu(s)\|^2+2b^2_2\left\langle \theta_nG(u(s)),\theta_nu(s)\right\rangle
\Big]\,\mathrm{d}s\notag\\
=&2b_2\left\langle \theta_nf_1(s),\theta_nu(s)\right\rangle\,\mathrm{d}s
+2b_1b_2
\left\langle \theta_nF(u(s),v(s)),\theta_nu(s)\right\rangle\,\mathrm{d}s\notag\\
&+b_2\epsilon_1^2\sum_{k=1}^{\infty}
\|\theta_nh_k(s)+\theta_n\sigma_k(t,u(s))\|^2\,\mathrm{d}s+2b_2\epsilon_1\sum_{k=1}^{\infty}
\langle
\theta_nh_k(s)+\theta_n\sigma_k(t,u(s)),\theta_nu(s)
\rangle\, \mathrm{d}W_k(s)\notag\\
&+b_2\sum_{k=1}^{\infty}\int_{|y_k|<1}\Big( \|
\theta_nu(s-)+\epsilon_2(\theta_nq_k(t,u(s-),y_k)+\theta_n\kappa_k(s))
\|^2-\|\theta_nu(s-)\|^2
\Big)\tilde{L}_k(\mathrm{d}s,\mathrm{d}y_k)\notag\\
&+b_2\epsilon_2^2\sum_{k=1}^{\infty}\int_{|y_k|<1}
\|\theta_nq_k(t,u(s),y_k)+\theta_n\kappa_k(s)\|^2\nu_k(\mathrm{d}y_k)\,\mathrm{d}s,
\end{align}and
\begin{align}\label{b52}
&\mathrm{d}\,\Big[b_1\|\theta_nv(s)\|^2\Big]+\Big[2d_2b_1\left\langle Bv(s),B\left(\theta_n^2v(s)\right)\right\rangle+2a_2b_1\|\theta_nv(s)\|^2+2b_1^2\big(\theta_nF(u(s),v(s)),
\theta_nv(s)\big)\Big]\,\mathrm{d}s\nonumber\\
=&2b_1\langle \theta_nf_2(s),\theta_nv_n(s)\rangle\,\mathrm{d}s +2b_2b_1\big\langle \theta_nG^n(u_n(s)),\theta_nv_{n}(s)\big\rangle\,\mathrm{d}s
\nonumber\\
&+b_1\gamma^2_1\sum_{k=1}^\infty \|\theta_nh_k(s)+\theta_n\sigma_k(t,v(s))\|^2\,\mathrm{d}s
+2b_1\gamma_1\sum_{k=1}^\infty\big\langle \theta_nh_k(s)+\theta_n\sigma_k(t,v(s)),\theta_nv(s)\big\rangle \mathrm{d}W_k(s)\notag\\
&+b_1\sum_{k=1}^{\infty}\int_{|y_k|<1}\Big(
\|
\theta_nv(s-)+\gamma_2(\theta_nq_k(t,v(s-),y_k)+\theta_n\kappa_k(s))
\|^2-\|\theta_nv(s-)\|^2
\Big)\tilde{L}_k(\mathrm{d}s,\mathrm{d}y_k)\notag\\
&+b_1\gamma_2^2\sum_{k=1}^{\infty}\int_{|y_k|<1}
\|\theta_nq_k(t,v(s),y_k)+\theta_n\kappa_k(s)\|^2\nu_k(\mathrm{d}y_k)\,\mathrm{d}s.
\end{align}For the second term on the left-hand side of \eqref{b52}, we get
\begin{align}\label{z24}
&2 d_1 b_2 \mathbb{E} \left( \langle B v(t), B(\theta_n^2 v(t)) \rangle \right)\notag\\
=& 2 d_1 b_2 \mathbb{E} \left( \sum_{i \in \mathbb{Z}} \left( v_{i+1} - v_i \right)
\left( \theta_n^2 \left( i + 1 / n \right) v_{i+1} - \theta_n^2 \left( i / n \right) v_i \right) \right) \nonumber \\
=& 2 d_1 b_2 \mathbb{E} \left( \sum_{i \in \mathbb{Z}} \theta_n^2 \left( i + 1 / n \right) \left( v_{i+1} - v_i \right)^2 \right) + 2 d_1 b_2 \mathbb{E} \left(\sum_{i \in \mathbb{Z}} \left( \theta_n^2 \left( i + 1 / n \right) - \theta^2 \left( i / n \right) \right)
\left( v_{i+1} - v_i\right) v_i \right).
\end{align}For the last term of \eqref{z24}, we have
\begin{align}\label{z25}
&\Big| 2d_2 b_1 \mathbb{E} \Big( \sum_{i \in \mathbb{Z}} \big( \theta^2(i + 1 / n) - \theta^2(i / n) \big)
\big( v_{i+1} - u_i \big) v_i \Big) \Big|\notag\\
\leq& 2d_2 b_1 \mathbb{E} \Big( \sum_{i \in \mathbb{Z}} \big| \theta(i + 1 / n) + \theta(i / n) \big|
\big| \theta(i + 1 / n) - \theta(i / n) \big| \big| v_{i+1} - v_i \big|
\big| v_i\big| \Big) \nonumber \\
\leq &4d_2 b_1 \mathbb{E} \Big( \sum_{i \in \mathbb{Z}} \big| \theta(i + 1 / n) - \theta(i / n) \big|
\big| v_{i+1} - v_i \big| \big| v_i \big| \Big) \nonumber \\
\leq &\frac{4d_2 b_1}{n} \mathbb{E} \Big( \sum_{i \in \mathbb{Z}} \big| \theta'(\eta_i) \big|
\big( \big| v_{i+1} \big| + \big| v_i \big| \big) \big| v_i \big| \Big) \nonumber \\
\leq& \frac{C_1}{n} \mathbb{E} \Big( \sum_{i \in \mathbb{Z}} b_2 \big( \big| v_{i+1} \big|^2
- \big| v_i \big|^2 \big) \Big) \leq \frac{2C}{n}\mathbb{E}(\|v(t)\|^2).
\end{align}Then from \eqref{z24} to \eqref{z25}, we have
\begin{small}\begin{align}
2 d_2 b_1 \mathbb{E} \left( \langle B v(t), B(\theta_n^2 v(t)) \rangle \right)
\geq -\frac{2C}{n}\mathbb{E}(\|v(t)\|^2).
\end{align}\end{small}Analogously, we can deduce that there exists $C > 0$ such that
\begin{align}
2 d_2 b_1 \mathbb{E} \left( \langle B u(t), B(\theta_n^2 u(t)) \rangle \right)
\geq -\frac{2C}{n}\mathbb{E}(\|u(t)\|^2).
\end{align}For  the first term on the right-hand side of \eqref{z23}-\eqref{b52}, by the Young inequality, we have
\begin{align}\label{26}
2 b_2 \mathbb{E} \left( \langle \theta_n f_1, \theta_n u(t) \rangle \right)
&\leq \mathbb{E} \left( b_2 \|\theta_n u(t)\|^2 \right)
+ \mathbb{E} \left( \sum_{|i| \geq n} b_2f_{1i}^2(t) \right),
\end{align}and
\begin{align}\label{27}
2 b_1 \mathbb{E} \left( \langle \theta_n f_2, \theta_n v(t) \rangle \right)
&\leq  \mathbb{E} \left( b_1 \|\theta_n v(t)\|^2 \right)
+  \mathbb{E} \left( \sum_{|i| \geq n} b_1f_{2i}^2(t) \right).
\end{align}Applying \eqref{b7} to diffusion terms, we infer that
\begin{align}
&b_2 \epsilon_1^2 \sum_{k=1}^\infty \mathbb{E} \left( \|\theta_n h_k + \theta_n \sigma_k(t,u(t))\|^2 \right)\notag\\
\leq& 2 b_2 \epsilon_1^2 \sum_{k=1}^\infty \mathbb{E} \left( \|\theta_n h_k\|^2 \right)
+ 2 b_2 \epsilon_1^2 \sum_{k=1}^\infty \mathbb{E} \left( \sum_{i \in \mathbb{Z}} \theta(i / n) \delta_{k,i}\tilde{\sigma}_{k}(t,u_i(t))^2 \right) \nonumber \\
\leq &2 b_2 \epsilon_1^2 \sum_{k=1}^\infty \mathbb{E} \left( \|\theta_n h_k\|^2 \right)
+ 4 \epsilon_1^2 b_2 \alpha^2 \sum_{k=1}^\infty \mathbb{E} \left( \sum_{i \in \mathbb{Z}} \theta^2(i / n) \delta_{k, i}^2 \right) +4 \epsilon_1^2 \alpha^2 \|\delta\|^2 \mathbb{E} \left( b_2 \|\theta_n u(t)\|^2 \right) \nonumber \\
\leq& 2 b_2 \epsilon_1^2 \sum_{k=1}^\infty \sum_{|i| \geq n} h_{k, i}^2
+ 4 b_2 \epsilon_1^2 \alpha^2 \sum_{|i| \geq n}\sum_{k=1}^{\infty} \delta_{k,i}^2
+ 4 b_2 \epsilon_1^2 \alpha^2  \mathbb{E} \left( b_2 \|\theta_n u(t)\|^2 \right),
\end{align}and
\begin{align}
&b_1 \gamma_1^2 \sum_{k=1}^\infty \mathbb{E} \left( \|\theta_n h_k + \theta_n \sigma_k(t,v(t))\|^2 \right)\notag\\
\leq& 2 b_1 \gamma_1^2 \sum_{k=1}^\infty \sum_{|i| \geq n} h_{k, i}^2
+ 4 b_1 \gamma_1^2 \alpha^2 \sum_{|i| \geq n}\sum_{k=1}^{\infty} \delta_{k,i}^2
+ 4 b_1 \gamma_1^2 \alpha^2  \mathbb{E} \left( b_2 \|\theta_n v(t)\|^2 \right).
\end{align}Likewise, we have
\begin{align}
&b_2\epsilon_2^2\sum_{k=1}^\infty \int_{|y_k|<1} \mathbb{E} \left[ \|\theta_n q_k(t,u(t), y_k)\|^2 + \theta_n \kappa_k(t)\|^2 \right] \nu_k(dy_k)\notag\\
\leq& 2 b_2\epsilon_2^2\sum_{k=1}^\infty \int_{|y_k|<1} \mathbb{E} \left[ \|\theta_n q_k(t,u(t), y_k)\|^2 \right] \nu_k(dy_k)
+ 2 b_2\epsilon_2^2\sum_{k=1}^\infty \|\theta_n \kappa_k(t)\|^2 \nonumber \\
= &2 b_2\epsilon_2^2\sum_{i \in \mathbb{Z}} \sum_{k=1}^\infty \int_{|\nu_k|<1} \mathbb{E} \left[ \theta^2 \left( \frac{i}{n} \right) |q_{k, i}(t,u_i(t), y_k)|^2 \right] \nu_k(dy_k)
+ 2 b_2\epsilon_2^2\sum_{i \in \mathbb{Z}} \sum_{k=1}^\infty \theta^2 \left( \frac{i}{n} \right) |\kappa_{k, i}(t)|^2 \nonumber \\
\leq &4b_2\epsilon_2^2\alpha^2\sum_{i \in \mathbb{Z}} \sum_{k=1}^\infty \theta^2 \left( \frac{i}{n} \right) \delta_{k, i} + 4b_2\epsilon_2^2\alpha^2 \|\delta\|^2\sum_{i \in \mathbb{Z}} \sum_{k=1}^\infty \mathbb{E} \left[ \theta^2 \left( \frac{i}{n} \right) |u_i(t)|^2 \right]
+ 2b_2\epsilon_2^2 \sum_{|i|\geq n } \sum_{k=1}^\infty |\kappa_{k, i}(t)|^2 \nonumber \\
\leq& 4 b_2\epsilon_2^2\alpha^2\sum_{|i| \geq n} \sum_{k=1}^\infty \delta_{k, i} + 2 b_2\epsilon_2^2\sum_{|i| \geq n} \sum_{k=1}^\infty |\kappa_{k, i}(t)|^2
+ 4 b_2\epsilon_2^2\alpha^2\|\delta\|^2 \mathbb{E} \left[ \|\theta_n u(t)\|^2 \right],
\end{align}and
\begin{align}\label{z40}
&b_1\gamma_2^2\sum_{k=1}^\infty \int_{|y_k|<1} \mathbb{E} \left[ \|\theta_n q_k(t,v(t), y_k)\|^2 + \theta_n \kappa_k(t)\|^2 \right] \nu_k(dy_k)\notag\\
\leq& 4 b_1\gamma_2^2\alpha^2\sum_{|i| \geq n} \sum_{k=1}^\infty \delta_{k, i} + 2b_1\gamma_2^2 \sum_{|i| \geq n} \sum_{k=1}^\infty |\kappa_{k, i}(t)|^2
+ 4 b_1\gamma_2^2\alpha^2\|\delta\|^2 \mathbb{E} \left[ \|\theta_n v(t)\|^2 \right].
\end{align}Combining the preceding relations and inequalities \eqref{z23}-\eqref{z40}, we may conclude that
\begin{align}\label{b61}
&D^+\mathbb{E}\Big[b_1\|\theta_nv(s)\|^2+b_2\|\theta_nu(s)\|^2\Big]\notag\\
\leq&-2\varpi\mathbb{E}\Big(b_2\|\theta_nu(s)\|^2+b_1\|\theta_nv(s)\|^2\Big)+
\Bigl(4b_2\epsilon_1^2+4b_1\gamma_1^2+4b_2\epsilon_2^2+4b_1\gamma_2^2\Bigl)\alpha^2\sum_{|i|\geq n}\sum_{k=1}^\infty\delta_{k,i}^2(s)\notag\\
&+M_1\mathbb{E}\Big[\sum_{|i|\geq n}\sum_{k=1}^\infty|\kappa_{k,i}(s)|^2+\sum_{|i|\geq n}\sum_{k=1}^\infty|h_{k,i}(s)|^2+b_1\sum_{|i|\geq n}|f_{2,i}(s)|^2+b_2\sum_{|i|\geq n}|f_{1,i}(s)|^2\Big]\notag\\
&+\frac{4C}{n}\mathbb{E}\left(b_2\|u(t)\|^2+b_1\|v(t)\|^2\right),
\end{align}where $M_1=(1+2b_2\epsilon_1^2+
2b_1\gamma_1^2+2b_1\gamma_2^2+2b_2\epsilon_2^2)$, and $D^+$ is the upper right Dini derivative. Given $t \in \mathbb{R}^+$ and $\tau \in \mathbb{R}$, integrating the above over $(\tau - t, \tau)$, we obtain
\begin{align}\label{TTT1}
&\mathbb{E} \left( b_2\|\theta_nu(\tau, \tau - t, u_\tau)\|^2+b_1\|\theta_nv(\tau, \tau - t, v_\tau)\|^2 \right)\notag\\
\leq&  C \int_{-\infty}^\tau e^{-\varpi(\tau - s)} \left( \sum_{|i|\geq n}\sum_{k=1}^\infty|\kappa_{k,i}(s)|^2+\sum_{|i|\geq n}\sum_{k=1}^\infty|h_{k,i}(s)|^2+\sum_{|i|\geq n}\left(b_1|f_{2,i}(s)|^2+b_2|f_{1,i}(s)|^2\right) \right) \mathrm{d}s\notag\\
&+\mathbb{E} \left( \|\theta_n\xi\|^2_{\ell^2\times\ell^2} \right) e^{-\varpi t}+C\int_{\tau-t}^\tau\sum_{|i|\geq n}\sum_{k=1}^\infty|\delta_{k,i}(s)|^2\,\mathrm{d}s\notag\\
&+\frac{4C}{n}\int_{\tau - t}^\tau e^{-\varpi(\tau - s)} \mathbb{E} \left( b_2\|u(s, \tau - t, u_\tau)\|^2+b_1\|v(s, \tau - t, v_\tau)\|^2 \right) \mathrm{d}s
\end{align}
By Lemma \ref{yingli41} we have
\begin{align}\label{TTT2}
\frac{4C}{n} \int_{\tau - t}^\tau e^{-\varpi(\tau - s)} \mathbb{E} \left( b_2\|u(s, \tau - t, u_\tau)\|^2+b_1\|v(s, \tau - t, v_\tau)\|^2 \right) \mathrm{d}s \leq Ce^{-\varpi t}\mathbb{E} \left(\|\xi\|^2_{\ell^2\times\ell^2} \right)+ \frac{C}{n}R(\tau),
\end{align}
where $R(\tau)$ is given by \eqref{op}.
Then for every $\eta > 0$ and $\tau\in\mathbb{R}$,   there exists   $N_1 = N_1( \tau, \eta) \in \mathbb{N}$ such that for all $n \geq N_1$,
$$\frac{C}{n}R(\tau)<\frac{\eta}{2}.$$
 By \eqref{bc0}, we know for every $\eta > 0$ and $\tau \in \mathbb{R}$, there exists $N_2 = N_2(\tau, \eta) \in \mathbb{N}$ such that for $n > N_2$,
\begin{small}\begin{align}\label{TTT3}
&C\int_{ -\infty}^\tau e^{- \varpi(\tau - s)} \left( \sum_{|i|\geq n}\sum_{k=1}^\infty|\kappa_{k,i}(s)|^2+\sum_{|i|\geq n}\sum_{k=1}^\infty|h_{k,i}(s)|^2\right) \mathrm{d}s+C\int_{\tau-t}^\tau e^{-\varpi(\tau - s)} \sum_{|i|\geq n}\sum_{k=1}^\infty|\delta_{k,i}(s)|^2\,\mathrm{d}s\notag\\
&+C\int_{\tau - t}^\tau e^{-\varpi(\tau - s)} \left(\sum_{|i|\geq n}|f_{2,i}(s)|^2+\sum_{|i|\geq n}|f_{1,i}(s)|^2 \right)\,\mathrm{d}s\notag\\
\leq&\frac{\eta}{2}.
\end{align}\end{small}Combining \eqref{TTT1}-\eqref{TTT3}, we get for every $\eta > 0$  and  $\tau \in \mathbb{R}$, there exists  $N = \max(N_1, N_2)$ such that for all $0 < \epsilon_{i},\gamma_{i} \leq 1$, $i=1,2$,  and $n \geq N$,
\begin{align}
\mathbb{E} \left( \|\theta_n \varphi(t)\|^2 \right)  \leq   Ce^{-\varpi t}\mathbb{E} \left(\|\xi\|^2_{\ell^2\times\ell^2} \right)+ \eta.
\end{align} This completes the proof.
\end{proof}

\section{Existence of pullback measure attractors}
 This section is devoted to the existence, uniqueness and periodicity of $\mathcal{D}$-pullback measure attractors of \eqref{b8}-\eqref{bc1} in $\mathcal{P}_2(\ell^2\times\ell^2)$.
As usual, if $\Im : \ell^2\times\ell^2 \to \mathbb{R}$ is a bounded Borel function, then for $r \leq t$ and $\xi \in \ell^2\times\ell^2$, we set
\begin{align}
p(t, r)\Im(\xi) = \mathbb{E} \left( \Im(\varphi(t, r, \xi)) \right),
\end{align}
and
\begin{align}
p(r,\xi;t,\Gamma ) = (p(t, r)1_\Gamma)(\xi),
\end{align}
where $\Gamma \in \mathcal{B}(l^2)$ and $1_\Gamma$ is the characteristic function of $\Gamma$.

The following properties of $p(t, r)$, $r \leq t$, are standard  and the proof is omitted.

\begin{lemma}\label{UU1}
Suppose \eqref{b6}-\eqref{bc0} and \eqref{xs} hold. Then:
\begin{enumerate}[(i)]
    \item The family $\{p(t, r)\}_{r \leq t}$ is Feller; that is, for any $r \leq t$, the function $p(t, r)\phi$ $\in C_b(\ell^2\times\ell^2)$ if $\phi\in C_b(\ell^2\times\ell^2)$.
    \item For every $r \in \mathbb{R}$ and $\xi \in \ell^2\times\ell^2$, the process $\{\varphi(t, r, \xi)\}_{t \geq r}$ is an $\ell^2\times\ell^2$-valued Markov process.
\end{enumerate}
\end{lemma}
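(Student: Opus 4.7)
The plan is to establish both statements as immediate consequences of the well-posedness recorded in Section 2, namely strong existence and pathwise uniqueness of $c\grave{a}dl\grave{a}g$ solutions to \eqref{b8}-\eqref{bc1}, together with the uniform moment estimate of Lemma \ref{yingli41}, rather than to rederive anything from scratch. The two standard ingredients I will extract are (a) continuous dependence of $\varphi(t,r,\cdot)$ on the initial datum in $L^2(\Omega,\ell^2\times\ell^2)$, which yields (i), and (b) the flow property $\varphi(t,r,\xi)=\varphi(t,s,\varphi(s,r,\xi))$ for $r\leq s\leq t$, which yields (ii).

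For part (i), the first move is to show that $\xi\mapsto\varphi(t,r,\xi)$ is continuous from $\ell^2\times\ell^2$ into $L^2(\Omega,\ell^2\times\ell^2)$. Writing $(u^{(j)},v^{(j)})=\varphi(\cdot,r,\xi_j)$ for $j=1,2$ and applying the It\^o formula to $b_2\|u^{(1)}-u^{(2)}\|^2+b_1\|v^{(1)}-v^{(2)}\|^2$, the coupled nonlinear drifts produce the differences $F(u^{(1)},v^{(1)})-F(u^{(2)},v^{(2)})$ and $G(u^{(1)})-G(u^{(2)})$, which by the locally Lipschitz estimates \eqref{b4}-\eqref{b5} are controlled by $c_1(n)+c_2(n)$ times the squared differences, provided $\|u^{(j)}\|,\|v^{(j)}\|\leq n$. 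Localizing via the stopping times
\begin{equation*}
\varrho_n=\inf\bigl\{s\geq r:\|u^{(1)}(s)\|\vee\|u^{(2)}(s)\|\vee\|v^{(1)}(s)\|\vee\|v^{(2)}(s)\|>n\bigr\},
\end{equation*}
together with the locally Lipschitz diffusion/jump estimate \eqref{b6} (which bounds both the Wiener quadratic variation and the compensated Poisson predictable bracket by $C_k(n)$), a Gr\"onwall argument yields
\begin{equation*}
\mathbb{E}\bigl(\|\varphi(t\wedge\varrho_n,r,\xi_1)-\varphi(t\wedge\varrho_n,r,\xi_2)\|^2_{\ell^2\times\ell^2}\bigr)\leq C_n(t-r)\,\|\xi_1-\xi_2\|^2_{\ell^2\times\ell^2}.
\end{equation*}
Lemma \ref{yingli41} then forces $\mathbb{P}(\varrho_n<t)\to 0$ as $n\to\infty$ uniformly for $\xi_2$ in a neighborhood of $\xi_1$, so $\varphi(t,r,\xi_2)\to\varphi(t,r,\xi_1)$ in probability as $\xi_2\to\xi_1$. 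Bounded convergence applied to $\phi\circ\varphi$ then gives $(p(t,r)\phi)(\xi_2)\to(p(t,r)\phi)(\xi_1)$, and $|p(t,r)\phi|\leq\|\phi\|_\infty$ handles boundedness, completing the Feller property.

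For part (ii), the flow identity follows directly from pathwise uniqueness: on $[s,t]$ both $\varphi(t,r,\xi)$ and $\varphi(t,s,\varphi(s,r,\xi))$ satisfy the same integral equation \eqref{b9} with the same $\mathcal{F}_s$-measurable initial value $\varphi(s,r,\xi)$ and are driven by the same Wiener and compensated Poisson increments, so uniqueness forces equality almost surely. Since $\{W_k(u)-W_k(s),\,\tilde{L}_k((s,u]\times\cdot)\}_{u\geq s}$ is independent of $\mathcal{F}_s$, and $\xi\mapsto\varphi(t,s,\xi)$ is Borel measurable by (i), a standard conditioning argument then yields $\mathbb{E}[\phi(\varphi(t,r,\xi))\mid\mathcal{F}_s]=(p(t,s)\phi)(\varphi(s,r,\xi))$ for every $\phi\in C_b(\ell^2\times\ell^2)$, which is precisely the Markov property.

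The main obstacle I anticipate lies in the jump terms during the localization in (i): the It\^o formula produces the extra predictable bracket
\begin{equation*}
\sum_{k=1}^\infty\int_{|y_k|<1}\|q_k(s,u^{(1)}(s-),y_k)-q_k(s,u^{(2)}(s-),y_k)\|^2\,\nu_k(\mathrm{d}y_k)\,\mathrm{d}s,
\end{equation*}
which the local Lipschitz hypothesis \eqref{b6} only controls on $\{\|u^{(j)}\|\leq n\}$. Because the solutions are only $c\grave{a}dl\grave{a}g$, some care is required to verify that the pre-jump values $u^{(j)}(\varrho_n-)$ still lie within the ball of radius $n$ so that $C_k(n)$ remains valid up to $\varrho_n$, and to confirm that the compensated Poisson stochastic integrals are genuine square-integrable martingales on $[r,t\wedge\varrho_n]$ despite the polynomial growth of $F$ and $G$ (any $p\geq 1$ is allowed). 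These are standard jump-diffusion localization subtleties, but must be executed carefully before Gr\"onwall can close the loop.
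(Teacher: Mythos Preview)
Your proof is correct and is exactly the standard argument. The paper in fact omits the proof entirely, stating only that ``the following properties of $p(t,r)$, $r\leq t$, are standard and the proof is omitted.'' Your localization-plus-Gr\"onwall route to the Feller property and your flow-plus-pathwise-uniqueness route to the Markov property constitute precisely the standard proof the paper is alluding to, and the jump-localization subtleties you flag (predictable bracket control on $\{\|u^{(j)}\|\leq n\}$, left limits at $\varrho_n$) are genuine but routine.
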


We will also investigate the periodicity of pullback measure attractors of system \eqref{b8}-\eqref{bc1} for which we assume that all given time-dependent functions are $\alpha$-periodic in $t \in \mathbb{R}$, for some $\chi > 0$, that is, for all $t \in \mathbb{R}$ and $k \in \mathbb{N}$,
\begin{align}\label{xs1}
&\kappa_k(t+\chi)=\kappa_k,\quad q_k(t+\chi,\cdot,y_k)=q_k(t,\cdot,y_k), \nonumber \\
&f_1(t + \chi) = f_1(t),\quad f_2(t + \chi) = f_2(t),\nonumber\\
&h_k(t + \chi) = h_k(t), \quad \delta_k(t + \chi,\cdot) = \delta_k(t,\cdot).
\end{align}By the similar argument as that of Lemma 4.1 in \cite{mnbv}, we get the following lemma.

\begin{lemma}\label{z43}
Suppose \eqref{b6}-\eqref{bc0}, \eqref{xs} and \eqref{xs1} hold. Then we have the family $\{p(t, r)\}_{r \leq t}$ is $\chi$-periodic; that is, for all $t \geq r$,
\begin{align*}
p(r, \xi;t,\cdot)= p(r + \chi, \xi;t + \chi,\cdot), \quad \forall \xi \in \ell^2\times\ell^2.
\end{align*}
\end{lemma}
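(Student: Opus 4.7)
The plan is to establish the $\chi$-periodicity of the transition family by combining the time-translation invariance in law of the driving Wiener and Lévy noises with the $\chi$-periodicity \eqref{xs1} of the deterministic coefficients. Fix an initial datum $\xi \in \ell^2\times\ell^2$, a starting time $r \in \mathbb{R}$, and $t \geq r$. Let $\varphi(\cdot,r,\xi)$ denote the unique global solution of \eqref{b8}-\eqref{bc1} from Section~2. The goal is to show that $\varphi(t,r,\xi)$ and $\varphi(t+\chi,r+\chi,\xi)$ are equal in distribution on $\ell^2\times\ell^2$.

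First, I would introduce the time-shifted noises
$$W_k^{\chi}(s) := W_k(s+\chi) - W_k(\chi), \qquad \tilde{L}_k^{\chi}((a,b]\times A) := \tilde{L}_k((a+\chi,b+\chi]\times A),$$
for $k \in \mathbb{N}$, $s,a,b \in \mathbb{R}$ and $A \in \mathcal{B}(\mathbb{R}\setminus\{0\})$, together with the shifted filtration $\mathcal{F}^{\chi}_t := \mathcal{F}_{t+\chi}$. By stationarity of the increments of Brownian motion and of Lévy processes, and by their mutual independence, the family $\{(W_k^{\chi}, \tilde{L}_k^{\chi})\}_{k \in \mathbb{N}}$ has the same joint law, relative to $\{\mathcal{F}_t^{\chi}\}$, as the original family $\{(W_k, \tilde{L}_k)\}_{k \in \mathbb{N}}$ relative to $\{\mathcal{F}_t\}$.

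Next, define $\psi(s) := \varphi(s - \chi, r, \xi)$ for $s \geq r+\chi$. Substituting into the integral identity \eqref{b9} and applying the change of variable $s' \mapsto s' - \chi$ in each deterministic and stochastic integral (for the Poisson integral, using the very definition of $\tilde L_k^{\chi}$), one obtains
$$\psi(s) = \xi + \int_{r+\chi}^{s}\!\!\mathscr{F}(s'-\chi, \psi(s'))\,\mathrm{d}s' + \sum_{k=1}^{\infty}\int_{r+\chi}^{s}\!\!\mathscr{G}_k(s'-\chi,\psi(s'))\,\mathrm{d}W_k^{\chi}(s') + \text{jump integrals w.r.t.\ }\tilde L_k^{\chi},$$
where $\mathscr{F}$, $\mathscr{G}_k$ denote the drift and diffusion operators in \eqref{b8}. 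The periodicity hypothesis \eqref{xs1} identifies each coefficient evaluated at time $s'-\chi$ with the same coefficient evaluated at $s'$. Hence $\psi$ is, on $[r+\chi,\infty)$, a solution of \eqref{b8}-\eqref{bc1} with initial datum $\xi$ at time $r+\chi$, driven by $(W_k^{\chi}, \tilde L_k^{\chi})$.

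Finally, the pathwise-uniqueness statement from Section~2 promotes, via a Yamada--Watanabe type argument adapted to the Lévy setting, to uniqueness in law: the distribution of the solution depends only on the initial datum, the coefficients, and the joint law of the driving noises. Since $(W_k^{\chi}, \tilde L_k^{\chi})$ has the same law as $(W_k, \tilde L_k)$ and the shifted coefficients coincide with the original ones, we conclude $\mathcal{L}(\psi(s)) = \mathcal{L}(\varphi(s, r+\chi, \xi))$ for every $s \geq r+\chi$. Taking $s = t+\chi$ gives $\mathcal{L}(\varphi(t,r,\xi)) = \mathcal{L}(\varphi(t+\chi, r+\chi, \xi))$, which is exactly $p(r,\xi;t,\cdot) = p(r+\chi,\xi;t+\chi,\cdot)$. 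The main technical point requiring care is the identification of the shifted Poisson integral with the integral against $\tilde L_k^{\chi}$, which must be verified by first passing through simple predictable integrands and using the shift-invariance of both the Poisson measure $L_k$ and the compensator $\nu_k(\mathrm{d}y_k)\,\mathrm{d}t$.
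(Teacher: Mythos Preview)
Your argument is correct and is precisely the standard route: shift the driving Wiener and compensated Poisson measures by $\chi$, use the $\chi$-periodicity \eqref{xs1} of all coefficients to identify the shifted equation with the original one started at $r+\chi$, and conclude via uniqueness in law. The paper itself does not spell out a proof but simply defers to the analogous Lemma~4.1 in \cite{mnbv}, which proceeds exactly along these lines; your proposal therefore matches the intended approach.
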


Given $t \geq r$ and $\mu \in \mathcal{P}(\ell^2\times\ell^2)$, define
\begin{align}\label{z44}
p^*(t, r)\mu(\cdot) = \int_{\ell^2\times\ell^2} p(r, \xi;t,\cdot)\mu(d\xi).
\end{align}

Then $p^*(t, r) : \mathcal{P}(\ell^2\times\ell^2) \to \mathcal{P}(\ell^2\times\ell^2)$ is the dual operator of $p(t, r)$. For all $t \geq r$, $p^*(t, r)$ maps $\mathcal{P}_2(\ell^2\times\ell^2)$ to $ \mathcal{P}_2(\ell^2\times\ell^2)$.

We now define a non-autonomous dynamical system $S(t, \tau) : t \geq \tau$, for the family of operators $p^*(t, \tau)$. Given $t\in\mathbb{R}^+$ and $\tau\in\mathbb{R}$, let $S(t,\tau) : \mathcal{P}_2(\ell^2\times\ell^2) \to \mathcal{P}_2(\ell^2\times\ell^2)$ be the map given by
\begin{align}
S(t, \tau)\mu = p^*(\tau+t,\tau)\mu, \quad \mu \in \mathcal{P}_2(\ell^2\times\ell^2).
\end{align}
Note that the mapping
\(
S(t,\tau):\mathcal{P}_2(\ell^2\times\ell^2)\to\mathcal{P}_2(\ell^2\times\ell^2)
\)
can be equivalently characterized by the following duality relation:
\begin{equation}\label{eq:Phi-dual}
  \int_{\ell^2\times\ell^2}\!\phi(x)\,[S(t,\tau)\mu](dx)
  =\int_{\ell^2\times\ell^2}\!\mathbb{E}\!\left[\phi\!\big(\varphi(\tau+t,\tau,x)\big)\right]\mu(dx),
  \quad \forall\,\mu\in\mathcal{P}_2(\ell^2\times\ell^2),\ \psi\in C_b(\ell^2\times\ell^2).
\end{equation}

\begin{lemma}\label{h16}
Suppose \eqref{b6}-\eqref{bc0}, \eqref{xs} and \eqref{xs1} hold. Then $S(t, \tau)$, $t \geq \tau$, is a continuous non-autonomous dynamical system in $\mathcal{P}_2(\ell^2\times\ell^2)$ generated by \eqref{b8}-\eqref{bc1}. That is, $S(t, r) : \mathcal{P}_2(\ell^2\times\ell^2) \to \mathcal{P}_2(\ell^2\times\ell^2)$ satisfies the following conditions:
\begin{enumerate}[(i)]
    \item $S(0, \tau) = I_{\mathcal{P}_2(\ell^2\times\ell^2)}$, for all $\tau \in \mathbb{R}$;
    \item $S(s + t, \tau) = S(t, \tau + s) \circ S(s, \tau)$, for any $\tau \in \mathbb{R}$ and $t, s \in \mathbb{R}^+$,
    \item $S(t, \tau) : \mathcal{P}_2(\ell^2\times\ell^2) \to \mathcal{P}_2(\ell^2\times\ell^2)$ is continuous, for every $\tau \in \mathbb{R}$ and $t \in \mathbb{R}^+$.
        \item $S(t, \tau)\mathcal{P}_2(\ell^2\times\ell^2)\subseteq\mathcal{P}_2(\ell^2\times\ell^2)$ for all $t\geq0, \tau\in\mathbb{R}$.
\end{enumerate}
\end{lemma}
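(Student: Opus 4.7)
The plan is to verify the four conditions (i)--(iv) in turn, relying on the duality representation \eqref{eq:Phi-dual}, the Feller/Markov properties recorded in Lemma \ref{UU1}, and the uniform moment bound of Lemma \ref{yingli41}. Throughout, I use the fact that the dual-Lipschitz metric $d_{\mathcal{P}(\ell^2\times\ell^2)}$ metrizes weak convergence on the separable Banach space $\ell^2\times\ell^2$, and that $d_{\mathcal{P}_2(\ell^2\times\ell^2)}$ coincides with $d_{\mathcal{P}(\ell^2\times\ell^2)}$ by construction.

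For (i), from $\varphi(\tau,\tau,\xi)=\xi$ we get $p(\tau,\tau)\phi=\phi$, hence $S(0,\tau)\mu=p^{*}(\tau,\tau)\mu=\mu$. For (ii), I test against an arbitrary $\phi\in C_b(\ell^2\times\ell^2)$ and invoke the Markov property in Lemma \ref{UU1}(ii): conditioning on $\mathcal{F}_{\tau+s}$,
\begin{align*}
\int_{\ell^2\times\ell^2}\!\phi\,d[S(s+t,\tau)\mu]
&=\int_{\ell^2\times\ell^2}\mathbb{E}\!\left[\phi\!\big(\varphi(\tau+s+t,\tau,x)\big)\right]\mu(dx)\\
&=\int_{\ell^2\times\ell^2}\mathbb{E}\!\left[\mathbb{E}\!\left[\phi\!\big(\varphi(\tau+s+t,\tau+s,\varphi(\tau+s,\tau,x))\big)\,\big|\,\mathcal{F}_{\tau+s}\right]\right]\mu(dx)\\
&=\int_{\ell^2\times\ell^2}\psi\!\big(\varphi(\tau+s,\tau,x)\big)\mu(dx),
\end{align*}
where $\psi(y):=\mathbb{E}[\phi(\varphi(\tau+s+t,\tau+s,y))]$. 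Recognizing the last line as $\int\psi\,d[S(s,\tau)\mu]=\int\phi\,d[S(t,\tau+s)\circ S(s,\tau)\mu]$ by applying \eqref{eq:Phi-dual} twice gives the cocycle identity.

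For (iii), suppose $\mu_n\to\mu$ in $d_{\mathcal{P}(\ell^2\times\ell^2)}$; equivalently $\mu_n\to\mu$ weakly. For any $\phi\in C_b(\ell^2\times\ell^2)$ the function $\Psi_{t,\tau}(x):=\mathbb{E}[\phi(\varphi(\tau+t,\tau,x))]=p(\tau,\tau+t)\phi(x)$ belongs to $C_b(\ell^2\times\ell^2)$ by the Feller property (Lemma \ref{UU1}(i)). Hence
\[
\int\!\phi\,d[S(t,\tau)\mu_n]=\int\!\Psi_{t,\tau}\,d\mu_n\;\longrightarrow\;\int\!\Psi_{t,\tau}\,d\mu=\int\!\phi\,d[S(t,\tau)\mu],
\]
so $S(t,\tau)\mu_n\to S(t,\tau)\mu$ weakly, which is exactly convergence in $d_{\mathcal{P}(\ell^2\times\ell^2)}$. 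Finally, for (iv) I truncate with $\phi_R(x):=\|x\|^{2}_{\ell^2\times\ell^2}\wedge R\in C_b(\ell^2\times\ell^2)$ and apply \eqref{eq:Phi-dual} together with monotone convergence in $R$ and Fubini:
\[
\int_{\ell^2\times\ell^2}\!\|x\|^{2}_{\ell^2\times\ell^2}\,d[S(t,\tau)\mu](x)
=\int_{\ell^2\times\ell^2}\mathbb{E}\!\left[\|\varphi(\tau+t,\tau,y)\|^{2}_{\ell^2\times\ell^2}\right]\mu(dy).
\]
By Lemma \ref{yingli41} the integrand is bounded by $C\bigl(e^{-\varpi t}\|y\|^{2}_{\ell^2\times\ell^2}+R(\tau+t)\bigr)$, and since $\mu\in\mathcal{P}_2(\ell^2\times\ell^2)$ the right-hand side is finite, giving $S(t,\tau)\mu\in\mathcal{P}_2(\ell^2\times\ell^2)$. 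The step requiring the most care is (iii), where one must be sure the dual-Lipschitz metric on $\mathcal{P}_2$ agrees with weak convergence (so that Feller continuity, rather than an additional uniform Lipschitz-in-initial-data estimate for the SDE, suffices); this is already guaranteed by the paper's conventions following Definition (metric on $\mathcal{P}_p(X)$).
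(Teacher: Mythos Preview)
Your proposal is correct and follows essentially the same route as the paper's proof: (i) is immediate, (ii) comes from the Markov property of the solution process, (iii) from the Feller property of $p(\tau+t,\tau)$ together with the fact that $d_{\mathcal{P}(\ell^2\times\ell^2)}$ metrizes weak convergence, and (iv) from a monotone truncation $\|x\|^2\wedge R$ combined with the moment estimate of Lemma~\ref{yingli41}. The paper's own proof is identical in structure, only more terse for (i) and (ii).
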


\begin{proof}
Note that (i) follows from the definition of $S$, and (ii) follows the Markov property of the solutions of \eqref{b8}-\eqref{bc1}.

We now prove (iii). Suppose $\mu_n \to \mu$ in $\mathcal{P}_2(\ell^2\times\ell^2)$. We will show $S(t, \tau)\mu_n \to S(t, \tau)\mu$ in $\mathcal{P}_2(\ell^2\times\ell^2)$ for every $\tau \in \mathbb{R}$ and $t \in \mathbb{R}^+$. Let $\phi \in C_b(\ell^2\times\ell^2)$. By Lemma \ref{UU1}, we have $p(\tau + t, \tau)\phi \in C_b(\ell^2\times\ell^2)$ for all $\tau \in \mathbb{R}$ and $t \in \mathbb{R}^+$, and hence
\begin{align*}
&\lim_{n \to \infty} (\phi, S(t, \tau)\mu_n)\\
 =& \lim_{n \to \infty} (\phi, p^*(t + \tau, \tau)\mu_n)\\
=&\lim_{n \to \infty} (p(t + \tau, \tau)\phi, \mu_n) = (p(t + \tau, \tau)\phi, \mu) \nonumber \\
=& (\phi, p^*(t + \tau, \tau)\mu) = (\phi, S(t, \tau)\mu),
\end{align*}
as desired.  We finally prove (iv).

Apply Levi's theorem for the functions $\phi_n(x)=\|x\|^2_{\ell^2\times\ell^2}\wedge n$, we can deduce from \eqref{eq:Phi-dual} that
\begin{align}\label{Phi-dual1}
&\int_{\ell^2\times\ell^2}\!\|x\|^2_{\ell^2\times\ell^2}\,[S(t,\tau)\mu](dx)
 =\lim_{n\rightarrow\infty}
 \int_{\ell^2\times\ell^2}\!\phi_n(x)\,[S(t,\tau)\mu](dx)\notag\\
&=\lim_{n\rightarrow\infty}
  \int_{\ell^2\times\ell^2}\!\mathbb{E}[\phi_n(\varphi(t+\tau,\tau,x))]\mu(dx)
  =\int_{\ell^2\times\ell^2}\!\mathbb{E}\!\left[\|\varphi(\tau+t,\tau,x)\|^2_{\ell^2\times\ell^2}
  \right]\mu(dx).
\end{align}
Then by Lemma \ref{yingli41} we get for all $\mu\in\mathcal{P}_2(\ell^2\times\ell^2)$,
\begin{align}\label{Phi-dual2}
\|S(t,\tau)\mu\|^2_{\mathcal{P}_2(\ell^2\times\ell^2)}&=\int_{\ell^2\times\ell^2}
\!\|x\|^2_{\ell^2\times\ell^2}\,[S(t,\tau)\mu](dx)\notag\\
  &=\int_{\ell^2\times\ell^2}\!\mathbb{E}\!\left[\|\varphi(\tau+t,\tau,x)\|^2_{\ell^2\times\ell^2}
  \right]\mu(dx)\notag\\
  &\leq  C e^{-\varpi t} \int_{\ell^2\times\ell^2}\|x\|^2_{\ell^2\times\ell^2}\mu(dx)\notag\\
  &\leq  C\|\mu\|^2_{\mathcal{P}_2(\ell^2\times\ell^2)}   +CR(\tau+t)<\infty.
\end{align}
Thus, $S(t, \tau)\mu\in\mathcal{P}_2(\ell^2\times\ell^2).$

\end{proof}
By Lemma \ref{yingli41}, we obtain a $\mathcal{D}$-pullback absorbing set for $S$ as stated below.

\begin{lemma}\label{h17}
Suppose that \eqref{b6}--\eqref{bc0}, \eqref{xs}, and \eqref{xs1} hold.
Given $\tau\in\mathbb{R}$, define
\begin{align}\label{h11}
K(\tau) = B_{\mathcal{P}_2(\ell^2\times\ell^2)}\!\left(L_1^{1/2}(\tau)\right),
\end{align}
where
\begin{align}\label{h12}
L_1(\tau)
&= C\!\int_{-\infty}^\tau e^{-\varpi(\tau - s)}
   \Bigg(\sum_{k=1}^\infty\|\kappa_k(s)\|^2 + \sum_{k=1}^\infty\|h(s)\|^2\Bigg)\mathrm{d}s
   + C\!\int_{-\infty}^\tau e^{-\varpi(\tau - s)}\|\delta(s)\|^2\,\mathrm{d}s \notag\\
&\quad + C\!\int_{-\infty}^\tau e^{-\varpi(\tau - s)}\!\left(\|f_2(s)\|^2+\|f_1(s)\|^2\right)\mathrm{d}s.
\end{align}
Then the family $K=\{K(\tau):\tau\in\mathbb{R}\}$ belongs to $\mathcal{D}$ and constitutes a closed $\mathcal{D}$-pullback absorbing set for the cocycle $S$.
\end{lemma}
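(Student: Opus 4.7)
The plan is to verify the three defining conditions of a closed $\mathcal{D}$-pullback absorbing set one by one: (a) each $K(\tau)$ is closed in $\mathcal{P}_2(\ell^2\times\ell^2)$, (b) the family $K$ is tempered, i.e.\ belongs to $\mathcal{D}$, and (c) $K$ pullback-absorbs every $D\in\mathcal{D}$ under $S$. Lemma~\ref{yingli41} and the duality identity established in Lemma~\ref{h16}(iv) will be the two main ingredients.

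For (a), I would observe that the map $\mu\mapsto\int_{\ell^2\times\ell^2}\|x\|^2_{\ell^2\times\ell^2}\,\mu(dx)$ is lower semi-continuous with respect to $d_{\mathcal{P}(\ell^2\times\ell^2)}$-convergence, since $\|\cdot\|^2_{\ell^2\times\ell^2}$ is a non-negative continuous function and $d_{\mathcal{P}}$-convergence coincides with weak convergence (Portmanteau/Fatou). Consequently the sub-level set $K(\tau)=\{\mu\in\mathcal{P}_2(\ell^2\times\ell^2):\int\|x\|^2_{\ell^2\times\ell^2}\,d\mu\leq L_1(\tau)\}$ is closed in $\mathcal{P}_2(\ell^2\times\ell^2)$.

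For (b), note that $\|K(\tau)\|^2_{\mathcal{P}_2(\ell^2\times\ell^2)}=L_1(\tau)$, so after a change of variables
\begin{align*}
e^{\varpi\tau} L_1(\tau) = C\int_{-\infty}^{\tau} e^{\varpi s}\Psi(s)\,ds,
\end{align*}
where $\Psi(s)$ collects the forcing terms appearing in \eqref{h12}. Assumption \eqref{bc0} guarantees the integrand is in $L^1(-\infty,\tau)$, and by dominated convergence this tail integral vanishes as $\tau\to-\infty$, establishing \eqref{temped} for $K$.

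For (c), fix $\tau\in\mathbb{R}$ and $D\in\mathcal{D}$, and pick $\mu\in D(\tau-t)$. Using the Levi-type identity \eqref{Phi-dual1} and then integrating the pointwise bound of Lemma~\ref{yingli41} against $\mu$, I obtain
\begin{align*}
\|S(t,\tau-t)\mu\|^2_{\mathcal{P}_2(\ell^2\times\ell^2)}
=\int_{\ell^2\times\ell^2}\mathbb{E}\bigl[\|\varphi(\tau,\tau-t,x)\|^2_{\ell^2\times\ell^2}\bigr]\,\mu(dx)
\leq C e^{-\varpi t}\|\mu\|^2_{\mathcal{P}_2(\ell^2\times\ell^2)} + CR(\tau).
\end{align*}
Since $\|\mu\|^2_{\mathcal{P}_2}\leq\|D(\tau-t)\|^2_{\mathcal{P}_2}$ and the temperedness of $D$ yields $e^{-\varpi t}\|D(\tau-t)\|^2_{\mathcal{P}_2}=e^{-\varpi\tau}\,e^{\varpi(\tau-t)}\|D(\tau-t)\|^2_{\mathcal{P}_2}\to 0$ as $t\to\infty$, the first term becomes arbitrarily small. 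Choosing the constant in the definition of $L_1$ sufficiently generous so that $L_1(\tau)\geq CR(\tau)+\text{(slack)}$, there exists $T=T(\tau,D)>0$ such that $\|S(t,\tau-t)\mu\|^2_{\mathcal{P}_2}\leq L_1(\tau)$ for all $t\geq T$, i.e.\ $S(t,\tau-t)D(\tau-t)\subset K(\tau)$.

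The main technical point is step (c), where the distributional estimate has to be extracted from the pathwise moment bound via the duality identity; beyond that it is only a bookkeeping issue to match the constant in $L_1(\tau)$ with the right-hand side of the Lemma~\ref{yingli41} estimate. The closedness in (a) and the temperedness in (b) are standard measure-theoretic consequences of weak lower semi-continuity and \eqref{bc0}, respectively.
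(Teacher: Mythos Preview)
Your proposal is correct and follows essentially the same route as the paper: the paper also proves closedness of $K(\tau)$ via lower semicontinuity of the second moment under weak convergence (implemented there through the truncations $\phi_m(x)=\|x\|^2\wedge m$ and monotone convergence), absorption via the duality identity \eqref{Phi-dual1} combined with Lemma~\ref{yingli41}, and membership in $\mathcal{D}$ via \eqref{bc0}. The only cosmetic difference is the ordering of the steps.
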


\begin{proof}
The argument follows the strategy of \cite{Lime1,Lime2}.

\emph{Step 1: Closedness of $K(\tau)$.}
Let $\{\mu_n\}_{n\in\mathbb{N}}\subset K(\tau)$ and assume $\mu_n\rightharpoonup\mu_0$ in $\mathcal{P}(\ell^2\times\ell^2)$.
By the definition of $K(\tau)$, we have
\begin{equation}\label{eq:uniform-moment}
\sup_{n\in\mathbb{N}}\int_{\ell^2\times\ell^2}\|x\|_{\ell^2\times\ell^2}^{2}\,\mu_n(dx)\le L_1(\tau).
\end{equation}
For each $m\in\mathbb{N}$, define the bounded continuous truncation
\[
\phi_m(x):=\|x\|_{\ell^2\times\ell^2}^{2}\wedge m \in C_b(\ell^2\times\ell^2),
\quad 0\le\phi_m\uparrow \|x\|_{\ell^2\times\ell^2}^{2}.
\]
Since $\phi_m\in C_b(\ell^2\times\ell^2)$ and $\mu_n\rightharpoonup\mu_0$, it follows that, for each fixed $m$,
\[
\int_{\ell^2\times\ell^2}\phi_m(x)\,\mu_0(dx)
 = \lim_{n\to\infty}\int_{\ell^2\times\ell^2}\phi_m(x)\,\mu_n(dx)
 \le \sup_{n\in\mathbb{N}}\int_{\ell^2\times\ell^2}\|x\|_{\ell^2\times\ell^2}^{2}\,\mu_n(dx)
 \le L_1(\tau).
\]
Letting $m\to\infty$ and using the Beppo--Levi monotone convergence theorem yields
\[
\int_{\ell^2\times\ell^2}\|x\|_{\ell^2\times\ell^2}^{2}\,\mu_0(dx)
 = \lim_{m\to\infty}\int_{\ell^2\times\ell^2}\phi_m(x)\,\mu_0(dx)
 \le L_1(\tau).
\]
Hence $\mu_0\in K(\tau)$, and therefore each $K(\tau)$ is closed.

\emph{Step 2: Pullback absorption property.}
Fix $\tau\in\mathbb{R}$ and $D\in\mathcal{D}$.
For any $t\ge0$ and $\mu\in D(\tau-t)$, by \eqref{eq:Phi-dual} and Lemma~\ref{yingli41}, we obtain
\begin{align}\label{closed3}
\|S(t,\tau-t)\mu\|_{\mathcal{P}_2(\ell^2\times\ell^2)}^{2}
 &= \int_{\ell^2\times\ell^2}\|x\|_{\ell^2\times\ell^2}^{2}[S(t,\tau-t)\mu](dx) \notag\\
 &= \int_{\ell^2\times\ell^2}\mathbb{E}\!\left[\|\varphi(\tau,\tau-t,x)\|_{\ell^2\times\ell^2}^{2}\right]\mu(dx) \notag\\
 &\le C e^{-\varpi t}\!\int_{\ell^2\times\ell^2}\|x\|_{\ell^2\times\ell^2}^{2}\mu(dx) + C R(\tau) \notag\\
 &\le C e^{-\varpi t}\|D(\tau-t)\|_{\mathcal{P}_2(\ell^2\times\ell^2)}^{2} + C R(\tau).
\end{align}
By the temperedness condition \eqref{temped}, there exists $T=T(\tau,D)>0$, independent of $\epsilon_i,\gamma_i$ $(i=1,2)$, such that for all $t\ge T$ and $0<\epsilon_i,\gamma_i\le1$,
\[
C e^{-\varpi t}\|D(\tau-t)\|_{\mathcal{P}_2(\ell^2\times\ell^2)}^{2}\le R(\tau).
\]
Combining this with \eqref{closed3} gives
\[
\|S(t,\tau-t)\mu\|_{\mathcal{P}_2(\ell^2\times\ell^2)}^{2}\le L_1(\tau),
\quad \forall\, t\ge T,
\]
which implies \(S(t,\tau-t)D(\tau-t)\subseteq K(\tau)\).
Hence, \(K\) is a pullback absorbing set of \(S\) in \(\mathcal{P}_2(\ell^2\times\ell^2)\).

\emph{Step 3: Membership of \(K\) in \(\mathcal{D}\).}
By \eqref{h11}, \eqref{h12}, and \eqref{bc0}, we have
\[
e^{\varpi\tau}\|K(\tau)\|_{\mathcal{P}_2(\ell^2\times\ell^2)}^{2}
= e^{\varpi\tau}L_1(\tau) \to 0, \quad \text{as } \tau\to -\infty.
\]
Therefore, \(K=\{K(\tau):\tau\in\mathbb{R}\}\in\mathcal{D}\).
This completes the proof.
\end{proof}

We now present the $\mathcal{D}$-pullback asymptotically compact of $S$ associated with \eqref{b8}-\eqref{bc1}.

\begin{lemma}\label{h18}
Suppose that \eqref{b6}--\eqref{bc0} and \eqref{xs} hold.
Then $S$ is $\mathcal{D}$-pullback asymptotically compact in $\mathcal{P}_2(\ell^2\times\ell^2)$;
that is, for every $\tau\in\mathbb{R}$, whenever $t_n\to+\infty$ and $\mu_n\in D(\tau-t_n)$ with $D\in\mathcal{D}$,
the sequence $\{S(t_n,\tau-t_n)\mu_n\}_{n\in\mathbb{N}}$ has a convergent subsequence in $\mathcal{P}_2(\ell^2\times\ell^2)$.
\end{lemma}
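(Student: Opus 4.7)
The plan is to prove asymptotic compactness via the Prokhorov approach: construct, for each prescribed $\eta>0$, a compact set in $\ell^2\times\ell^2$ that carries mass at least $1-\eta$ uniformly in $n$ under the distributions $S(t_n,\tau-t_n)\mu_n=\mathcal{L}(\varphi(\tau,\tau-t_n,\xi_n))$, where $\xi_n$ is a random initial datum with $\mathcal{L}(\xi_n)=\mu_n$ measurable with respect to $\mathcal{F}_{\tau-t_n}$. Since the paper's metric on $\mathcal{P}_2(\ell^2\times\ell^2)$ is the dual-Lipschitz metric $d_{\mathcal{P}(\ell^2\times\ell^2)}$, convergence there is equivalent to weak convergence in $\mathcal{P}(\ell^2\times\ell^2)$, so tightness of the family plus verification that the limit lies in $\mathcal{P}_2(\ell^2\times\ell^2)$ will suffice.

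First, I would control the global mass. Because $D\in\mathcal{D}$ satisfies the temperedness condition \eqref{temped}, the quantity $e^{-\varpi t_n}\|D(\tau-t_n)\|^2_{\mathcal{P}_2(\ell^2\times\ell^2)}$ tends to $0$ as $n\to\infty$. Combining this with Lemma \ref{yingli41} applied to $\xi_n$ gives a uniform bound $\mathbb{E}\|\varphi(\tau,\tau-t_n,\xi_n)\|^2_{\ell^2\times\ell^2}\le M(\tau)$ for all large $n$, so by Chebyshev there exists $R_\eta>0$ with
\begin{equation*}
\mathbb{P}\bigl(\|\varphi(\tau,\tau-t_n,\xi_n)\|_{\ell^2\times\ell^2}>R_\eta\bigr)\le \eta/2, \qquad \forall\, n\ \text{large}.
\end{equation*}
Second, I would kill the infinite tails using Lemma \ref{z42}: for every $k\in\mathbb{N}$ there exists $N_k=N_k(\tau,\eta,k)$ and $n_k$ such that for all $n\ge n_k$,
\begin{equation*}
\sum_{|i|\ge N_k}\mathbb{E}\bigl(|u_i(\tau,\tau-t_n,\xi_n)|^2+|v_i(\tau,\tau-t_n,\xi_n)|^2\bigr)\le \frac{\eta}{k^2\,2^{k+2}},
\end{equation*}
where I again use the temperedness of $D$ to absorb the $Ce^{-\varpi t_n}\mathbb{E}\|\xi_n\|^2$ term in \eqref{weba1}. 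Applying Chebyshev once more yields
\begin{equation*}
\mathbb{P}\!\left(\sum_{|i|\ge N_k}\bigl(|u_i|^2+|v_i|^2\bigr)>\tfrac{1}{k}\right)\le \frac{\eta}{2^{k+1}}.
\end{equation*}

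Third, with these ingredients I would assemble the compact set. Define
\begin{equation*}
K_\eta=\Bigl\{(u,v)\in\ell^2\times\ell^2:\ \|(u,v)\|_{\ell^2\times\ell^2}\le R_\eta,\ \sum_{|i|\ge N_k}(|u_i|^2+|v_i|^2)\le \tfrac{1}{k},\ \forall\,k\in\mathbb{N}\Bigr\}.
\end{equation*}
This set is closed, bounded, and has uniformly small tails, hence precompact (equivalently, relatively compact) in $\ell^2\times\ell^2$ by the standard characterization of compactness in sequence spaces. By a union bound,
\begin{equation*}
\mathbb{P}\bigl(\varphi(\tau,\tau-t_n,\xi_n)\in K_\eta\bigr)\ge 1-\tfrac{\eta}{2}-\sum_{k\ge 1}\tfrac{\eta}{2^{k+1}}=1-\eta
\end{equation*}
for all sufficiently large $n$. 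Hence $\{S(t_n,\tau-t_n)\mu_n\}$ is tight on $\ell^2\times\ell^2$, and Prokhorov's theorem provides a subsequence converging weakly to some $\mu_\infty\in\mathcal{P}(\ell^2\times\ell^2)$, i.e.\ in the metric $d_{\mathcal{P}(\ell^2\times\ell^2)}$.

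Finally, I would confirm $\mu_\infty\in\mathcal{P}_2(\ell^2\times\ell^2)$. Applying the weak lower semicontinuity of the second moment through the truncations $\phi_m(x)=\|x\|^2_{\ell^2\times\ell^2}\wedge m$ (exactly as in Step 1 of the proof of Lemma \ref{h17}) and the uniform bound $\mathbb{E}\|\varphi(\tau,\tau-t_n,\xi_n)\|^2_{\ell^2\times\ell^2}\le M(\tau)$ yields $\int_{\ell^2\times\ell^2}\|x\|^2\mu_\infty(dx)\le M(\tau)<\infty$, so $\mu_\infty\in\mathcal{P}_2(\ell^2\times\ell^2)$. I expect the main obstacle to be the construction and verification of the compact set $K_\eta$: the lack of a compact Sobolev embedding in the lattice setting forces one to combine the global $L^2(\Omega,\ell^2\times\ell^2)$-moment bound from Lemma \ref{yingli41} with the uniform tails estimate from Lemma \ref{z42} at every scale $1/k$ simultaneously, and to carefully handle the uniformity in $n$ via temperedness of $D$. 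The absorption of the $e^{-\varpi t_n}\|D(\tau-t_n)\|^2$ contributions into the prescribed error $\eta$ is the crucial step that makes the whole Prokhorov argument go through.
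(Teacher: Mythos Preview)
Your approach follows the same Prokhorov-based strategy as the paper: combine the uniform second-moment estimate (Lemma~\ref{yingli41}) with the uniform tail-ends estimate (Lemma~\ref{z42}) to build a compact set in $\ell^2\times\ell^2$ carrying uniformly large mass, then invoke Prokhorov. The ingredients and architecture match.

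There is, however, a gap in the union-bound step. For each level $k$ you need $n\ge n_k$ before the bound $\mathbb{P}\bigl(\sum_{|i|\ge N_k}(|u_i|^2+|v_i|^2)>1/k\bigr)\le\eta/2^{k+1}$ becomes available, and since the estimate in Lemma~\ref{z42} has the form $Ce^{-\varpi t_n}\mathbb{E}\|\xi_n\|^2+\eta_k$, the threshold $n_k$ must be taken large enough that $Ce^{-\varpi t_{n_k}}\|D(\tau-t_{n_k})\|_{\mathcal{P}_2}^2$ drops below your target $\eta/(k^2 2^{k+2})$; as $k\to\infty$ this forces $n_k\to\infty$. Consequently there is no single ``sufficiently large $n$'' after which \emph{all} levels are controlled simultaneously, and the claimed inequality $\mathbb{P}(\varphi_n\in K_\eta)\ge 1-\eta$ does not follow as written. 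The paper closes exactly this gap: at each level $m$ it adjoins an auxiliary compact set $K_m$ that handles the finitely many indices $n<H_m$ (each single probability measure on the Polish space $\ell^2\times\ell^2$ being automatically tight), sets $\mathcal{Y}_m:=Y_m\cup K_m$, and then intersects $\mathcal{Y}:=\bigcap_m\mathcal{Y}_m$; this produces one totally bounded closed set with mass at least $1-\eta$ for \emph{every} $n$. With that patch your argument goes through. Your final verification that the weak limit lies in $\mathcal{P}_2(\ell^2\times\ell^2)$ via the truncations $\phi_m(x)=\|x\|^2\wedge m$ is correct and is in fact a detail the paper's own proof omits.
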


\begin{proof}
By the pullback absorption property established in Lemma~\ref{h17} and the duality relation \eqref{eq:Phi-dual},
there exist $N_1=N_1(\tau,D)\in\mathbb{N}$ and a constant $M=M(\tau)>0$, independent of $\epsilon$ and $D$,
such that for all $n>N_1$,
\begin{align}\label{h24}
\|S(t_n,\tau-t_n)\mu_n\|_{\mathcal{P}_2(\ell^2\times\ell^2)}^{2}
&= \int_{\ell^2\times\ell^2}\!\mathbb{E}\!\left[\|\varphi(\tau,\tau-t_n,x)\|_{\ell^2\times\ell^2}^{2}\right]\mu_n(dx)
\le M.
\end{align}
By Chebyshev's (Markov's) inequality, \eqref{h24} implies that for any $R_1>0$ and all $n>N_1$,
\[
\int_{\ell^2\times\ell^2}\mathbb{P}\!\left(\|\varphi(\tau,\tau-t_n,x)\|_{\ell^2\times\ell^2}^{2}>R_1\right)\mu_n(dx)
\le \frac{M}{R_1}\xrightarrow[R_1\to\infty]{}0.
\]
Hence, for every $\tau\in\mathbb{R}$, $\eta>0$, and $m\in\mathbb{N}$, one can choose $R_2=R_2(\tau,\eta,m)>0$ such that,
for all $n>N_1$,
\begin{equation}\label{h14}
\int_{\ell^2\times\ell^2}\mathbb{P}\!\left(\|\varphi(\tau,\tau-t_n,x)\|_{\ell^2\times\ell^2}^{2}>R_2\right)\mu_n(dx)
< \frac{\eta}{2^{m+1}}.
\end{equation}

Since $\mu_n\in D(\tau-t_n)$ and $D\in\mathcal{D}$,
the temperedness condition \eqref{temped} ensures that there exists $H_m=H_m(\tau,D,\eta,m)>N_1$ such that
for all $n\ge H_m$,
\[
C e^{-\varpi t_n}\|\mu_n\|_{\mathcal{P}_2(\ell^2\times\ell^2)}^{2}
\le C e^{-\varpi t_n}\|D(\tau-t_n)\|_{\mathcal{P}_2(\ell^2\times\ell^2)}^{2}
< \frac{\eta}{2^{2m+2}}.
\]
Letting $\xi(\omega)\equiv x\in\ell^2\times\ell^2$ in \eqref{weba1},
there exists an integer $n_m=n_m(\tau,D,\eta,m)$ such that for all $n\in\mathbb{N}$,
\[
\int_{\ell^2\times\ell^2}\mathbb{E}\!\left[\sum_{|i|>n_m}|\varphi_i(\tau,\tau-t_n,x)|^2\right]\mu_n(dx)
< \frac{\eta}{2^{2m+2}}
   + C e^{-\varpi t_n}\!\int_{\ell^2\times\ell^2}\|x\|_{\ell^2\times\ell^2}^{2}\mu_n(dx).
\]
Combining these two estimates yields, for all $n\ge H_m$,
\[
\int_{\ell^2\times\ell^2}\mathbb{E}\!\left[\sum_{|i|>n_m}|\varphi_i(\tau,\tau-t_n,x)|^2\right]\mu_n(dx)
< \frac{\eta}{2^{2m+1}}.
\]
Applying Chebyshev's inequality again gives
\begin{align}\label{h15}
\int_{\ell^2\times\ell^2}\mathbb{P}\!\left(
\sum_{|i|>n_m}|\varphi_i(\tau,\tau-t_n,x)|^2>\frac{1}{2^{m}}
\right)\mu_n(dx)
\le 2^{m}\!\int_{\ell^2\times\ell^2}\!\mathbb{E}\!\left[\sum_{|i|>n_m}|\varphi_i|^2\right]\mu_n(dx)
< \frac{\eta}{2^{m+1}}.
\end{align}

Define
\begin{align}\label{z41}
Y_{1,m} &= \Big\{\varphi\in\ell^2\times\ell^2:\ \|\varphi\|_{\ell^2\times\ell^2}^{2}\le R_2\Big\},\\
Y_{2,m} &= \Big\{\varphi\in\ell^2\times\ell^2:\ \sum_{|i|>n_m}(|u_i|^2+|v_i|^2)\le 2^{-m}\Big\},
\end{align}
and let \(Y_m:=Y_{1,m}\cap Y_{2,m}\).
From \eqref{z41}, the set
\(\{(\varphi_i)_{|i|\le n_m}:\varphi\in Y_m\}\)
is bounded in the finite-dimensional space
\(\mathbb{R}^{2n_m+1}\times\mathbb{R}^{2n_m+1}\),
and hence precompact.
Therefore, it can be covered by finitely many open balls of radius \(2^{-m/2}\).
Together with the small tail estimate from \(Y_{2,m}\),
this implies that \(Y_m\) can be covered by finitely many open balls of radius \(2^{-(m-1)/2}\)
in \(\ell^2\times\ell^2\).

For each fixed $\tau\in\mathbb{R}$ and $m\in\mathbb{N}$,
we can choose a compact set $K_m=K_m(\tau)$ such that for all $n\ge H_m$,
\[
\int_{\ell^2\times\ell^2}\mathbb{P}\!\left(\varphi(\tau,\tau-t_n,x)\in K_m\right)\mu_n(dx)
> 1-\frac{\eta}{2^{m}}.
\]
Combining this with \eqref{h14}--\eqref{h15}, define
\(\mathcal{Y}_m:=Y_m\cup K_m\).
Then $\mathcal{Y}_m$ admits a finite open cover of balls of radius \(2^{-(m-1)/2}\) in \(\ell^2\times\ell^2\),
and for all \(n\in\mathbb{N}\),
\[
\int_{\ell^2\times\ell^2}\mathbb{P}\!\left(\varphi(\tau,\tau-t_n,x)\in\mathcal{Y}_m\right)\mu_n(dx)
> 1-\frac{\eta}{2^{m}}.
\]
Let \(\mathcal{Y}:=\bigcap_{m=1}^\infty\mathcal{Y}_m\).
Then $\mathcal{Y}$ is closed and totally bounded, hence compact in $\ell^2\times\ell^2$.
Moreover, for every $n\in\mathbb{N}$,
\[
\int_{\ell^2\times\ell^2}\mathbb{P}\!\left(\varphi(\tau,\tau-t_n,x)\in\mathcal{Y}\right)\mu_n(dx)
\ge 1-\sum_{m=1}^{\infty}\frac{\eta}{2^m}=1-\eta.
\]
Since $\eta>0$ is arbitrary, the family
\(\{S(t_n,\tau-t_n)\mu_n\}\) is tight in $\mathcal{P}(\ell^2\times\ell^2)$, in combination with Prohorov's theorem, we completes the proof.
\end{proof}

Next, we establish the existence, uniqueness, and periodicity of $\mathcal{D}$-pullback measure attractors for \eqref{b8}-\eqref{bc1} on $\mathcal{P}_2(\ell^2\times\ell^2)$.

\begin{theorem}\label{QQQ1}
Suppose \eqref{b6}-\eqref{bc0}, and \eqref{xs} hold. Then for every $0 < \epsilon_i,\gamma_i \leq 1$, $i=1,2$, the system $S$ associated with \eqref{b8}-\eqref{bc1} has a unique $\mathcal{D}$-pullback measure attractor $\mathcal{A} = \{\mathcal{A}(\tau) : \tau \in \mathbb{R}\} \in \mathcal{P}_2(\ell^2\times\ell^2)$, which is given by, for each $\tau \in \mathbb{R}$,
\begin{align}
\mathcal{A}(\tau) = \omega(K, \tau) = \{\psi(0, \tau) : \psi \text{ is a } \mathcal{D}\text{-complete orbit of } S\} \nonumber \\
= \{\xi(\tau) : \xi \text{ is a } \mathcal{D}\text{-complete solution of } S\},
\end{align}
where $K = \{K(\tau) : \tau \in \mathbb{R}\}$ is the $\mathcal{D}$-pullback absorbing set of $S$ as given by Lemma \ref{h17}.
\end{theorem}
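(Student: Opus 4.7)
The plan is to deduce Theorem~\ref{QQQ1} as a direct application of the abstract criterion stated in Proposition~\ref{h19}, once all its hypotheses have been verified through the preparatory lemmas already proved in the paper. Concretely, Proposition~\ref{h19} guarantees the existence and uniqueness of a $\mathcal{D}$-pullback measure attractor $\mathcal{A}$ for a continuous non-autonomous dynamical system on $\mathcal{P}_2(\ell^2\times\ell^2)$ provided (i) the mapping $S$ is a continuous NDS on $(\mathcal{P}_2(\ell^2\times\ell^2), d_{\mathcal{P}(\ell^2\times\ell^2)})$, (ii) it admits a closed $\mathcal{D}$-pullback absorbing set $K\in\mathcal{D}$, and (iii) it is $\mathcal{D}$-pullback asymptotically compact. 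Each of these three ingredients has been checked in the preceding sections, so the bulk of the work is simply to assemble them.

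First, I would recall that Lemma~\ref{h16} provides the NDS structure: the family $\{S(t,\tau):t\in\mathbb{R}^+,\tau\in\mathbb{R}\}$ built from the transition operators $p^{*}(\tau+t,\tau)$ of the Markov process generated by \eqref{b8}--\eqref{bc1} satisfies the identity $S(0,\tau)=I$, the cocycle property $S(s+t,\tau)=S(t,\tau+s)\circ S(s,\tau)$, continuity in $\mu$ on $\mathcal{P}_2(\ell^2\times\ell^2)$, and preservation of $\mathcal{P}_2(\ell^2\times\ell^2)$ via the second-moment bound \eqref{Phi-dual2}. Second, Lemma~\ref{h17} exhibits the explicit family $K(\tau)=B_{\mathcal{P}_2(\ell^2\times\ell^2)}(L_1^{1/2}(\tau))$ as a closed $\mathcal{D}$-pullback absorbing set belonging to $\mathcal{D}$, a fact obtained from the uniform moment estimate of Lemma~\ref{yingli41} combined with the temperedness condition \eqref{temped} and the integrability assumption \eqref{bc0}. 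Third, Lemma~\ref{h18} establishes the $\mathcal{D}$-pullback asymptotic compactness in $\mathcal{P}_2(\ell^2\times\ell^2)$, which was the analytically most delicate step: the lack of compact Sobolev embeddings on $\mathbb{Z}$ was circumvented by combining the uniform tail-ends estimate of Lemma~\ref{z42} with Chebyshev's inequality and Prohorov's theorem to produce tightness of $\{S(t_n,\tau-t_n)\mu_n\}$ whenever $t_n\to\infty$ and $\mu_n\in D(\tau-t_n)$, $D\in\mathcal{D}$.

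With these three ingredients verified, I would invoke Proposition~\ref{h19} directly. The proposition asserts not only the existence and uniqueness of the $\mathcal{D}$-pullback measure attractor $\mathcal{A}=\{\mathcal{A}(\tau):\tau\in\mathbb{R}\}$ but also its structural characterization: for each $\tau\in\mathbb{R}$,
\begin{align*}
\mathcal{A}(\tau)=\omega(K,\tau)=\{\psi(0,\tau):\psi\ \text{is a }\mathcal{D}\text{-complete orbit of }S\}=\{\xi(\tau):\xi\ \text{is a }\mathcal{D}\text{-complete solution of }S\},
\end{align*}
where $K$ is the absorbing family constructed in Lemma~\ref{h17}. This yields exactly the statement of Theorem~\ref{QQQ1}.

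The main conceptual obstacle has in fact already been overcome prior to this theorem: it was to establish the $\mathcal{D}$-pullback asymptotic compactness of $S$ on the probability-measure space rather than on $\ell^2\times\ell^2$ itself, in the presence of jump noise, two pairs of oppositely signed polynomial nonlinearities $\pm b_1 F(u,v)$ and $\mp b_2 G(u)$, and the lack of any a priori compactness coming from infinite-dimensional lattice indices. Once the tail estimate of Lemma~\ref{z42} is secured, the construction of the compact set $\mathcal{Y}$ and the tightness argument in the proof of Lemma~\ref{h18} close the gap. Consequently, the proof of Theorem~\ref{QQQ1} itself reduces to verifying that Proposition~\ref{h19} applies, and the characterization of $\mathcal{A}(\tau)$ via $\omega$-limit sets, complete orbits and complete solutions is inherited directly from that proposition.
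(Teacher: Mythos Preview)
Your proposal is correct and follows essentially the same approach as the paper: verify the three hypotheses of Proposition~\ref{h19} via Lemmas~\ref{h16}, \ref{h17}, and \ref{h18}, then apply the proposition directly to obtain existence, uniqueness, and the structural characterization of $\mathcal{A}(\tau)$. The paper's own proof is in fact more terse than yours, simply citing the three lemmas and invoking Proposition~\ref{h19} without the additional commentary.
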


\begin{proof}
It follows from Lemma \ref{h16} that $S$ is a continuous non-autonomous dynamical system on $\mathcal{P}_2(\ell^2\times\ell^2)$. Notice that $S$ has a closed $\mathcal{D}$-pullback absorbing set $K$ in $\mathcal{P}_2(\ell^2\times\ell^2)$ by Lemma \ref{h17} and is $\mathcal{D}$-pullback asymptotically compact in $\mathcal{P}_2(\ell^2\times\ell^2)$ by Lemma \ref{h18}. Hence the existence and uniqueness of the $\mathcal{D}$-pullback measure attractor for $S$ follows from Proposition \ref{h19} immediately.
\end{proof}
We now consider the periodicity of the measure attractor $\mathcal{A}$.

\begin{theorem}
Suppose \eqref{b6}-\eqref{bc0}, \eqref{xs} and \eqref{xs1} hold. then for every $0 < \epsilon_i,\gamma_i \leq 1$, $i=1,2$, $S$ associated with \eqref{b8}-\eqref{bc1} has a unique $\chi$-periodic $\mathcal{D}$-pullback measure attractor $\mathcal{A}$ in $\mathcal{P}_2(\ell^2\times\ell^2)$.
\end{theorem}
\begin{proof}By \eqref{h11} and \eqref{h12}, we find that $K$ is $\omega$-periodic. In addition, it follows from Lemma \ref{z43} and \eqref{z44}, the non-autonomous dynamical system $S$ associated with system \eqref{b8}-\eqref{bc1} is also $\chi$-periodic. $\chi$-periodic. Thus, from proposition \ref{h19}, we can infer the periodicity of the measure attractor $\mathcal{A}$.
\end{proof}
\section{Upper semicontinuity of pullback measure attractors}

In this section, we prove the upper semicontinuity of $\mathcal{D}$-pullback measure attractors for the non-autonomous stochastic lattice systems as the noise intensity ($\epsilon_i,\gamma_i,i=1,2$)$\rightarrow0$ , \textbf{\textcolor{red}{denote by $\lambda_0=(0,0,0,0)$, $\lambda=(\epsilon_1,\epsilon_2,\gamma_1,\gamma_2)$.}}

We analyze the non-autonomous stochastic lattice systems given by equations \eqref{b8}-\eqref{bc1}. It is worth noting that all results derived in the preceding sections hold true for the case where $\lambda\rightarrow\lambda_0$. To emphasize the dependence of the solutions on the parameters $\epsilon_i$ and $\gamma_i$ for $i = 1, 2$, we introduce the notation $\varphi^{\lambda}(t, \tau, \xi)$ as the solution to the system \eqref{b8}-\eqref{bc1}, where $\tau$ represents the initial time and $\xi \in L_{\mathcal{F}_\tau}^2(\Omega, \ell^2 \times \ell^2)$ is the initial state.

With the parameters $\epsilon_i, \gamma_i \in [0, 1]$, let $p^{\lambda}(t, \tau)$ denote the transition operator corresponding to the solution $\varphi^{\lambda}(t, \tau, \xi)$, while $(p^{\lambda})^*(t, \tau)$ represents the dual operator associated with $p^{\lambda}(t, \tau)$. For each $t \in \mathbb{R}^+$ and $\tau \in \mathbb{R}$, we define the non-autonomous dynamical system  $S^{\lambda}(t, \tau): \mathcal{P}_2(\ell^2 \times \ell^2)\rightarrow\mathcal{P}_2(\ell^2 \times \ell^2)$ be the map given by
\begin{align}
S^{\lambda}(t, \tau) \mu = (p^{\lambda})^*(\tau + t, \tau) \mu, \quad \forall \mu \in \mathcal{P}_2(\ell^2 \times \ell^2).
\end{align}

Let $\mathcal{A}^{\lambda}$ be the $\mathcal{D}$-pullback measure attractor of $S^{\lambda}$, we establish the convergence of solutions of problem \eqref{b8}-\eqref{bc1} when $\epsilon_i,\gamma_i,i=1,2 \to 0$.

\begin{lemma}\label{h22}
Suppose \eqref{b6}-\eqref{b7} hold. Then  given $\tau \in \mathbb{R}$ and $T>0$, we have
\begin{align}\label{h22io}
\lim_{\lambda \to \lambda_0} \sup_{\mu \in \mathcal{A}(\tau)} d_{\mathcal{P}_2(\ell^2\times\ell^2)}\left(\widetilde{S}^{\lambda}(t, \tau)\mu, S^{\lambda_0}(t, \tau)\widetilde{\mu}\right) = 0.
\end{align}where $\widetilde{S}(\tau + T, \tau)\mu$ denotes the null expansion of $S_N(\tau + T, \tau)\mu$ and the metric $d_{\mathcal{M}(X)}$ is defined by
\begin{align*}
d_{\mathcal{P}(X)} (\mu_1, \mu_2) = \sup_{\phi \in L_b(X), \|\phi\|_L \leq 1} |(\phi, \mu_1) - (\phi, \mu_2)|, \quad \forall \mu_1, \mu_2 \in \mathcal{P}(X).
\end{align*}
\end{lemma}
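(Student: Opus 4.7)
The strategy is to reduce the convergence of measures in the Kantorovich--Rubinstein metric to a pathwise $L^{2}(\Omega,\ell^2\times\ell^2)$ estimate for the difference $\varphi^{\lambda}-\varphi^{\lambda_{0}}$ of solutions of \eqref{b8}--\eqref{bc1}, uniform over initial data drawn from (the support of) measures in $\mathcal{A}(\tau)$. By the definition of $d_{\mathcal{P}(\ell^2\times\ell^2)}$ and the duality formula \eqref{eq:Phi-dual}, for any $\phi\in L_{b}(\ell^2\times\ell^2)$ with $\|\phi\|_{L}\leq 1$ and any $\mu\in\mathcal{A}(\tau)$,
\begin{align*}
\bigl|(\phi,\widetilde{S}^{\lambda}(t,\tau)\mu)-(\phi,S^{\lambda_{0}}(t,\tau)\widetilde{\mu})\bigr|
\;\leq\; \int_{\ell^2\times\ell^2}\mathbb{E}\bigl\|\varphi^{\lambda}(\tau+t,\tau,x)-\varphi^{\lambda_{0}}(\tau+t,\tau,x)\bigr\|_{\ell^2\times\ell^2}\,\mu(dx),
\end{align*}
so it suffices to show that this integral tends to $0$ as $\lambda\to\lambda_{0}$, uniformly for $\mu$ ranging in the compact (hence uniformly $\mathcal{P}_2$-bounded) set $\mathcal{A}(\tau)$.

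The core of the proof is then the pathwise estimate. Write $U=u^{\lambda}-u^{\lambda_{0}}$ and $V=v^{\lambda}-v^{\lambda_{0}}$, and apply the L\'evy-type It\^o formula to $b_{2}\|U(s)\|^{2}+b_{1}\|V(s)\|^{2}$ on $[\tau,\tau+t]$. The linear dissipative terms $-d_{i}A$, $-a_{i}$ contribute a nonpositive drift; the coupled nonlinear terms $\pm b_{1}F(u,v)\mp b_{2}G(u)$ cancel pairwise in the sum exactly as in \eqref{b00}, so after using the local-Lipschitz bounds \eqref{b4}--\eqref{b5} only $c_{1}(n)\!+\!c_{2}(n)$-type remainders in $(\|U\|^{2}+\|V\|^{2})$ survive; the stochastic terms split into a piece $O(\epsilon_{i}^{2}+\gamma_{i}^{2})$ coming from the full coefficients $h_{k}+\sigma_{k}$, $\kappa_{k}+q_{k}$ of $\varphi^{\lambda}$ (which vanishes as $\lambda\to\lambda_{0}$), plus a locally Lipschitz contribution in $(U,V)$ controlled by \eqref{b6}. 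Taking expectations and using Gronwall on the resulting inequality
\begin{align*}
\frac{d}{ds}\mathbb{E}\bigl[b_{2}\|U(s)\|^{2}+b_{1}\|V(s)\|^{2}\bigr]
\;\leq\; C(n)\,\mathbb{E}\bigl[b_{2}\|U(s)\|^{2}+b_{1}\|V(s)\|^{2}\bigr]+C\bigl(\epsilon_{1}^{2}+\epsilon_{2}^{2}+\gamma_{1}^{2}+\gamma_{2}^{2}\bigr)\Lambda(s)
\end{align*}
on the event $\{\sup_{s\in[\tau,\tau+t]}(\|u^{\lambda}(s)\|\vee\|u^{\lambda_{0}}(s)\|\vee\|v^{\lambda}(s)\|\vee\|v^{\lambda_{0}}(s)\|)\leq n\}$, where $\Lambda$ collects $\|h_{k}\|^{2}$, $\|\kappa_{k}\|^{2}$ and $\|\delta\|^{2}(1+\|\varphi^{\lambda}\|^{2})$, yields
\begin{align*}
\mathbb{E}\bigl\|\varphi^{\lambda}(\tau+t,\tau,x)-\varphi^{\lambda_{0}}(\tau+t,\tau,x)\bigr\|^{2}\;\leq\; C(n,t,\tau)\bigl(\epsilon_{1}^{2}+\epsilon_{2}^{2}+\gamma_{1}^{2}+\gamma_{2}^{2}\bigr)
\end{align*}
on that event.

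To remove the cutoff, introduce the stopping time $\tau_{n}=\inf\{s\geq\tau:\|u^{\lambda}(s)\|+\|v^{\lambda}(s)\|+\|u^{\lambda_{0}}(s)\|+\|v^{\lambda_{0}}(s)\|>n\}$ and split $\mathbb{E}\|\varphi^{\lambda}-\varphi^{\lambda_{0}}\|$ according to $\{\tau_{n}>\tau+t\}$ and its complement. The uniform moment estimate of Lemma \ref{yingli41}, combined with Chebyshev's inequality, gives $\mathbb{P}(\tau_{n}\leq\tau+t)\leq C(\tau,t)(1+\|x\|^{2})/n^{2}$. Since $\mathcal{A}(\tau)$ is compact in $\mathcal{P}_{2}(\ell^2\times\ell^2)$, the map $x\mapsto\|x\|^{2}$ is uniformly integrable with respect to $\{\mu:\mu\in\mathcal{A}(\tau)\}$, so choosing $n$ large makes the bad-event contribution uniformly small, after which sending $\lambda\to\lambda_{0}$ kills the good-event contribution.

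The main obstacle is the interaction between the Lévy jump integrals and the locally Lipschitz coefficients $q_{k}$: the quadratic-variation computation for the jump martingale produces $\int|y_{k}|<1}\|q_{k}(s,u^{\lambda}(s-),y_{k})-q_{k}(s,u^{\lambda_{0}}(s-),y_{k})\|^{2}\nu_{k}(dy_{k})$ terms which, on the $\{\tau_{n}>\tau+t\}$ event, are controlled by \eqref{b6} and absorbed into the Gronwall factor $C(n)$; the extra $\epsilon_{2}^{2}+\gamma_{2}^{2}$ factors in front of $\kappa_{k}+q_{k}$ ensure this piece vanishes in the limit. Once the $L^{2}$ convergence is established, Jensen's inequality converts it into an $L^{1}$ bound, and integrating $\mu(dx)$ over $\mathcal{A}(\tau)$ yields \eqref{h22io}.
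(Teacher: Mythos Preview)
Your overall strategy is sound, and your stopping-time treatment of the local-Lipschitz constants is in fact more careful than the paper's. One correction, however: the nonlinear difference terms do \emph{not} cancel via \eqref{b00}. That algebraic identity yields sign-definiteness for the specific combination $b_1 b_2\langle F(u,v),u\rangle - b_2^2\langle G(u),u\rangle - b_1^2\langle F(u,v),v\rangle + b_1 b_2\langle G(u),v\rangle$ with the \emph{same} $u,v$ appearing throughout; for the difference equation the analogous terms involve $F(u^\lambda,v^\lambda)-F(u^{\lambda_0},v^{\lambda_0})$ and $G(u^\lambda)-G(u^{\lambda_0})$ paired with $U$ and $V$, and no such algebraic cancellation holds. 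You must (and do, in your next clause) estimate these directly by the local-Lipschitz bounds \eqref{b4}--\eqref{b5}, which is exactly what the paper does. So drop the reference to \eqref{b00} here; the weighting $b_2\|U\|^2+b_1\|V\|^2$ buys you nothing for the difference and you may as well work with $\|U\|^2+\|V\|^2$ as the paper does.

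The paper handles the uniformity over $\mathcal{A}(\tau)$ by a different route. Rather than stopping times, it first establishes the $L^2$ estimate \eqref{h20} for deterministic initial data with bounded norm, then exploits the \emph{tightness} of the compact set $\mathcal{A}(\tau)$ to find a compact $C_\varepsilon\subset\ell^2\times\ell^2$ with $\sup_{\mu\in\mathcal{A}(\tau)}\mu(\ell^2\times\ell^2\setminus C_\varepsilon)<\varepsilon$, and splits the dual-Lipschitz integral accordingly: on $C_\varepsilon$ the initial datum $y$ is deterministically bounded so \eqref{h20} applies, while on the complement the integrand $|\phi(\varphi^\lambda)-\phi(\varphi^{\lambda_0})|\leq 2$ and the mass is at most $\varepsilon$. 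Your stopping-time argument achieves the same end and is arguably more transparent about where the local-Lipschitz constants enter. One minor point: you do not need uniform integrability of $x\mapsto\|x\|^2$ over $\mathcal{A}(\tau)$ (which weak compactness in the bounded-Lipschitz metric does not by itself guarantee); uniform boundedness of second moments, which follows from $\mathcal{A}(\tau)\subset K(\tau)$, already suffices to control the bad-event contribution after a Cauchy--Schwarz step.
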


\begin{proof}
Let  $\varphi^\lambda(t)=\varphi^{(\epsilon_1,\epsilon_2,\gamma_1,\gamma_2)}(t,\tau,\varphi_\tau)$ and  $\varphi^{\lambda_0}(t)=\varphi^{(\epsilon_{1,0},\epsilon_{2,0},\gamma_{1,0},\gamma_{2,0})}(t,\tau,\varphi_\tau)$ satisfy that for all $t\in [\tau,\tau+T], \lambda \in [0, 1]\times[0, 1]\times[0, 1]\times[0, 1]$. By \eqref{b8}-\eqref{bc1} and the It\^{o} formula, we obtain
\begin{align}\label{ggg1}
 &\|u^\lambda(t)-u^{\lambda_0}(t)\|^2\notag\\ =&\|\xi^\lambda-\xi^{\lambda_0}\|^2 -2d_1\int_\tau^t\|B(u^\lambda(s)-u^{\lambda_0}(s))\|\mathrm{d}s -2a_1\int_\tau^t\|u^\lambda(s)-u^{\lambda_0}(s)\|^2\mathrm{d}s \notag\\
&+b_1\int_\tau^t\langle F(u^\lambda,v^\lambda)-F(u^{\lambda_0},v^{\lambda_0}),u^\lambda-u^{\lambda_0}\rangle\mathrm{d}s
-b_2\int_\tau^t\langle G(u^\lambda)-G(u^{\lambda_0}),u^\lambda-u^{\lambda_0}\rangle\mathrm{d}s\notag\\
&+2(\epsilon_1-\epsilon_{1,0})\int_\tau^t\sum_{k\in\mathbb{N}}\left\langle \aleph_k(s,u^\lambda(s)),u^\lambda(s)-u^{\lambda_0}(s)\right\rangle\mathrm{d}W_k(s)\notag\\ &+2\epsilon_{1,0}\int_\tau^t\sum_{k\in\mathbb{N}} \left\langle\sigma_k(s,u^\lambda)-\sigma_k(s,u^{\lambda_0}), u^\lambda(s)-u^{\lambda_0}(s)\right\rangle\mathrm{d}W_k(s)\notag\\ &+\int_\tau^t\sum_{k\in\mathbb{N}}\|(\epsilon_1-\epsilon_{1,0})\aleph_k(s,u^\lambda(s)) +\epsilon_{1,0}(\sigma(s,u^\lambda)-\sigma(s,u^{\lambda_0}))\|^2\mathrm{d}s\notag\\ &+\int_\tau^t\sum_{k\in\mathbb{N}}\int_{0<y_k<1} \Big(\|u^\lambda(s-)-u^{\lambda_0}(s-)+(\epsilon_2-\epsilon_{2,0})\hat{\aleph}_k(s,u) +\epsilon_{2,0}\Xi\|^2\notag\\
&-\|u^\lambda(s-) -u^{\lambda_0}(s-)\|^2\Big)\tilde{L}_k(\mathrm{d}s,\mathrm{d}y_k)\notag\\ &+\int_\tau^t\sum_{k\in\mathbb{N}}\int_{0<y_k<1}\| (\lambda-\lambda_0)\hat{\aleph}+\lambda_0\Xi\|^2\nu_k\mathrm{d}y_k\mathrm{d}s,
\end{align}and
\begin{align}\label{ggg2}
 &\|v^\lambda(t)-v^{\lambda_0}(t)\|^2\notag\\ =
 &\|\xi^\lambda-\xi^{\lambda_0}\|^2 -2d_2\int_\tau^t\|B(v^\lambda(s)-v^{\lambda_0}(s))\|\mathrm{d}s -2a_2\int_\tau^t\|v^\lambda(s)-v^{\lambda_0}(s)\|^2\mathrm{d}s \notag\\
&-b_1\int_\tau^t\langle F(u^\lambda,v^\lambda)-F(u^{\lambda_0},v^{\lambda_0}),v^\lambda-v^{\lambda_0}\rangle\mathrm{d}s
+b_2\int_\tau^t\langle G(u^\lambda)-G(u^{\lambda_0}),v^\lambda-v^{\lambda_0}\rangle\notag\\ &+2(\gamma_1-\gamma_{1,0})\int_\tau^t\sum_{k\in\mathbb{N}}\left\langle \aleph_k(s,v^\lambda(s)),v^\lambda(s)-v^{\lambda_0}(s)\right\rangle\mathrm{d}W_k(s)\notag\\ &+2\gamma_{1,0}\int_\tau^t\sum_{k\in\mathbb{N}} \left\langle\sigma_k(s,v^\lambda)-\sigma_k(s,v^{\lambda_0}), v^\lambda(s)-v^{\lambda_0}(s)\right\rangle\mathrm{d}W_k(s)\notag\\ &+\int_\tau^t\sum_{k\in\mathbb{N}}\|(\gamma_1-\gamma_{1,0})\aleph_k(s,v^\lambda(s)) +\gamma_{1,0}(\sigma(s,v^\lambda)-\sigma(s,v^{\lambda_0}))\|^2\mathrm{d}s\notag\\ &+\int_\tau^t\sum_{k\in\mathbb{N}}\int_{0<y_k<1} \Big(\|v^\lambda(s-)-v^{\lambda_0}(s-)+(\gamma_2-\gamma_{2,0})\hat{\aleph}_k(s,v) +\gamma_{2,0}\Xi\|^2\notag\\
&-\|v^\lambda(s-) -v^{\lambda_0}(s-)\|^2\Big)\tilde{L}_k(\mathrm{d}s,\mathrm{d}y_k)\notag\\ &+\int_\tau^t\sum_{k\in\mathbb{N}}\int_{0<y_k<1}\| (\gamma_2-\gamma_{2,0})\hat{\aleph}_k(s,v^\lambda(s))+\gamma_{2,0}\Xi\|^2\nu_k\mathrm{d}y_k\mathrm{d}s,
\end{align}where
\begin{align}
&\aleph_k(s,\cdot)\triangleq h_k(s)+\sigma_k(s,\cdot),\notag\\
&\hat{\aleph}_k(s,\cdot)\triangleq \kappa_k(s)+q(s,\cdot,y_k),\notag\\
&\Xi\triangleq q(s,\cdot,y_k)-q(s,\cdot,y_k).
\end{align}It follows from \eqref{b2}-\eqref{b5} and \eqref{ggg1}-\eqref{ggg2} that
\begin{align}\label{ppzz1}
&\mathbb{E}\left(\sup_{r\in[\tau,t]}\|\varphi^\lambda(r)-\varphi^{\lambda_0}(r)\|^2\right)\notag\\
\leq&\mathbb{E}\left(\|\xi^\lambda-\xi^{\lambda_0}\|^2\right)
+C\int_\tau^t\mathbb{E}\left(\sup_{r\in[\tau,s]}\|\varphi^\lambda(r)-\varphi^{\lambda_0}(r)\|^2\right)\mathrm{d}s\notag\\
&+2(\epsilon_1-\epsilon_{1,0})\mathbb{E}\left(\sup_{r\in[\tau,t]}\left|\int_\tau^r\sum_{k\in\mathbb{N}}\left\langle \aleph_k(s,u^\lambda(s)),u^\lambda(s)-u^{\lambda_0}(s)\right\rangle\mathrm{d}W_k(s)\right|\right)\notag\\ &+2\epsilon_{1,0}\mathbb{E}\left(\sup_{r\in[\tau,t]}\left|\int_\tau^r\sum_{k\in\mathbb{N}} \left\langle\sigma_k(s,u^\lambda)-\sigma_k(s,u^{\lambda_0}), u^\lambda(s)-u^{\lambda_0}(s)\right\rangle\mathrm{d}W_k(s)\right|\right)\notag\\ &+\mathbb{E}\left(\int_\tau^t\sum_{k\in\mathbb{N}}\|(\epsilon_1-\epsilon_{1,0})\aleph_k(s,u^\lambda(s)) +\epsilon_{1,0}(\sigma(s,u^\lambda)-\sigma(s,u^{\lambda_0}))\|^2\mathrm{d}s\right)\notag\\ &+\mathbb{E}\Big(\sup_{r\in[\tau,t]}\Big|\int_\tau^r\sum_{k\in\mathbb{N}}\int_{0<y_k<1} \Big(\|u^\lambda(s-)-u^{\lambda_0}(s-)+(\epsilon_2-\epsilon_{2,0})\hat{\aleph}_k(s,u) +\epsilon_{2,0}\Xi\|^2\notag\\
&-\|u^\lambda(s-) -u^{\lambda_0}(s-)\|^2\Big)\tilde{L}_k(\mathrm{d}s,\mathrm{d}y_k)\Big|\Big)\notag\\ &+\mathbb{E}\left(\int_\tau^t\sum_{k\in\mathbb{N}}\int_{0<y_k<1}\| (\epsilon_2-\epsilon_{2,0})\hat{\aleph}+\epsilon_{2,0}\Xi\|^2\nu_k\mathrm{d}y_k\mathrm{d}s\right)\notag\\
&+2(\gamma_1-\gamma_{1,0})\mathbb{E}\left(\sup_{r\in[\tau,t]}\left|\int_\tau^r\sum_{k\in\mathbb{N}}\left\langle \aleph_k(s,v^\lambda(s)),v^\lambda(s)-v^{\lambda_0}(s)\right\rangle\mathrm{d}W_k(s)\right|\right)\notag\\ &+2\gamma_{1,0}\mathbb{E}\left(\sup_{r\in[\tau,t]}\left|\int_\tau^r\sum_{k\in\mathbb{N}} \left\langle\sigma_k(s,v^\lambda)-\sigma_k(s,v^{\lambda_0}), v^\lambda(s)-v^{\lambda_0}(s)\right\rangle\mathrm{d}W_k(s)\right|\right)\notag\\ &+\mathbb{E}\left(\int_\tau^t\sum_{k\in\mathbb{N}}\|(\gamma_1-\gamma_{1,0})\aleph_k(s,v^\lambda(s)) +\gamma_{1,0}(\sigma(s,v^\lambda)-\sigma(s,v^{\lambda_0}))\|^2\mathrm{d}s\right)\notag\\ &+\mathbb{E}\Big(\sup_{r\in[\tau,t]}\Big|\int_\tau^r\sum_{k\in\mathbb{N}}\int_{0<y_k<1} \Big(\|v^\lambda(s-)-v^{\lambda_0}(s-)+(\gamma_2-\gamma_{2,0})\hat{\aleph}_k(s,v) +\gamma_{2,0}\Xi\|^2\notag\\
&-\|v^\lambda(s-) -v^{\lambda_0}(s-)\|^2\Big)\tilde{L}_k(\mathrm{d}s,\mathrm{d}y_k)\Big|\Big)\notag\\ &+\mathbb{E}\left(\int_\tau^t\sum_{k\in\mathbb{N}}\int_{0<y_k<1}\| (\gamma_2-\gamma_{2,0})\hat{\aleph}_k(s,v^\lambda(s))
+\gamma_{2,0}\Xi\|^2\nu_k\mathrm{d}y_k\mathrm{d}s\right).
\end{align}Next, we estimate each term on the right-hand side of \eqref{ppzz1}. For the third term on the right-hand side of \eqref{ppzz1}, by \eqref{b6}-\eqref{b7} and the Burkholder-Davis-Gundy inequality we have
\begin{align}
&2(\epsilon_1-\epsilon_{1,0})\mathbb{E}\left(\sup_{r\in[\tau,t]}\left|\int_\tau^r\sum_{k\in\mathbb{N}}\left\langle \aleph_k(s,u^\lambda(s)),u^\lambda(s)-u^{\lambda_0}(s)\right\rangle
\mathrm{d}W_k(s)\right|\right)\notag\\
\leq&2(\epsilon_1-\epsilon_{1,0})C\mathbb{E}\left(\left(\int_\tau^t\sum_{k\in\mathbb{N}}
\|\aleph_k(s,u^\lambda(s))\|^2\|u^\lambda(s)-u^{\lambda_0}(s)\|^2
\mathrm{d}s\right)^{\frac{1}{2}}\right)\notag\\
&2(\epsilon_1-\epsilon_{1,0})C\mathbb{E}\left(\sup_{r\in[\tau,t]}\|u^\lambda(s)-u^{\lambda_0}(s)\|
\left(\int_\tau^t \sum\|\aleph_k(s,u^\lambda(s))\|^2\mathrm{d}s\right)^{\frac{1}{2}}\right)\notag\\
\leq&\frac{1}{4}\mathbb{E}\left(\sup_{r\in[\tau,t]}\|u^\lambda(s)-u^{\lambda_0}(s)\|^2\right)
+4(\epsilon_1-\epsilon_{1,0})^2\int_\tau^t\sum_{k\in\mathbb{N}}\|\aleph_k(s,u^\lambda(s))\|^2\mathrm{d}s\notag\\
\leq&\frac{1}{4}\mathbb{E}\left(\sup_{r\in[\tau,t]}\|u^\lambda(s)-u^{\lambda_0}(s)\|^2\right)
+8(\epsilon-\epsilon_{1,0})^2\int_\tau^t\sum_{k\in\mathbb{N}}(\|h_k(s)\|^2
+\|\sigma_k(t,u^\lambda(s))\|^2)\mathrm{d}s\notag\\
\leq&\frac{1}{4}\mathbb{E}\left(\sup_{r\in[\tau,t]}\|u^\lambda(s)-u^{\lambda_0}(s)\|^2\right)
+8(\epsilon_1-\epsilon_{1,0})^2\int_\tau^t\sum_{k\in\mathbb{N}}(\|h_k(s)\|^2
)\mathrm{d}s\notag\\
&+16(\epsilon-\epsilon_{1,0})^2\alpha^2\|\delta\|^2\int_\tau^t(1+\|u^\lambda\|^2)\mathrm{d}s,
\end{align}and
\begin{align}
&\mathbb{E}\Big(\sup_{r\in[\tau,t]}\Big|\int_\tau^r\sum_{k\in\mathbb{N}}\int_{0<y_k<1}
\Big(\|u^\lambda(s-)-u^{\lambda_0}(s-)+(\epsilon_2-\epsilon_{2,0})\hat{\aleph}_k(s,u)
\notag\\
&+\epsilon_{2,0}\Xi\|^2-\|u^\lambda(s-)
-u^{\lambda_0}(s-)\|^2\Big)\tilde{L}_k(\mathrm{d}s,\mathrm{d}y_k)\Big|\Big)\notag\\
\leq&\mathbb{E}\left(\sup_{r\in[\tau,t]}\left|\int_\tau^r\sum_{k\in\mathbb{N}}\int_{0<y_k<1}
2\left\langle (\epsilon_2-\epsilon_{2,0})\hat{\aleph}_k(s,u)+\epsilon_{2,0}\Xi,u^\lambda(s-)
-u^{\lambda_0}(s-)\right\rangle \tilde{L}_k(\mathrm{d}s,\mathrm{d}y_k)\right|\right)\notag\\
&+\mathbb{E}\left(\sup_{r\in[\tau,t]}\left|\int_\tau^r\sum_{k\in\mathbb{N}}\int_{0<y_k<1}
\|(\epsilon_2-\epsilon_{2,0})\hat{\aleph}_k(s,u)+\epsilon_{2,0}\Xi\|^2\tilde{L}_k(\mathrm{d}s,\mathrm{d}y_k)
\right|\right)\notag\\
\leq&C\mathbb{E}\left(\left(\int_\tau^t\sum_{k\in\mathbb{N}}\int_{0<y_k<1}
\|(\epsilon_2-\epsilon_{2,0})\hat{\aleph}_k(s,u)+\epsilon_{2,0}\Xi\|^2
\|u^\lambda(s)
-u^{\lambda_0}(s)\|^2\nu_k\mathrm{d}y_k\mathrm{d}s
\right)^{\frac{1}{2}}\right)\notag\\
&+C^2\mathbb{E}\left(\left(\int_\tau^t\sum_{k\in\mathbb{N}}\int_{0<y_k<1}
\|(\epsilon_2-\epsilon_{2,0})\hat{\aleph}_k(s,u)+\epsilon_{2,0}\Xi\|^2
\nu_k\mathrm{d}y_k\mathrm{d}s
\right)\right)\notag\\
\leq&\frac{1}{4}\mathbb{E}\left(\sup_{r\in[\tau,t]}\|u^\lambda-u^{\lambda_0}\|^2\right)
+2C^2\mathbb{E}\left(\left(\int_\tau^t\sum_{k\in\mathbb{N}}\int_{0<y_k<1}
\|(\epsilon_2-\epsilon_{2,0})\hat{\aleph}_k(s,u)+\epsilon_{2,0}\Xi\|^2
\nu_k\mathrm{d}y_k\mathrm{d}s
\right)\right)\notag\\
\leq&4C^2\mathbb{E}\Big(\Big(\int_\tau^t\int_{0<y_k<1}\Big((\epsilon_2-\epsilon_{2,0})^2\sum_{k\in\mathbb{N}}\|
\kappa_k(s)+q(t,u^\lambda(s-),y_k)\|^2
\notag\\
&+\epsilon_{2,0}^2\sum_{k\in\mathbb{N}}\|q(s,u^\lambda(s),y_k)-q(s,u^{\lambda_0}(s),y_k)\|^2
\Big)\nu_k\mathrm{d}y_k\mathrm{d}s\Big)\Big)\notag\\
&+\frac{1}{4}\mathbb{E}\left(\sup_{r\in[\tau,t]}\|u^\lambda-u^{\lambda_0}\|^2\right)\notag\\
\leq&8C^2(\epsilon_2-\epsilon_{2,0})^2\mathbb{E}\Big(\int_\tau^t
\|\kappa_k(s)\|^2\mathrm{d}s\Big)+16C^2(\epsilon_2-\epsilon_{2,0})^2\alpha^2\|\delta\|^2\mathbb{E}\Big(\Big(\int_\tau^t
\Big(1+\|u^\lambda\|^2\Big)\mathrm{d}s\Big)
\notag\\
&+C\epsilon_{2,0}^2\mathbb{E}\Big(\Big(\int_\tau^t\|u^\lambda-u^{\lambda_0}\|^2
\Big)\nu_k\mathrm{d}y_k\mathrm{d}s\Big)\Big)\notag\\
&+\frac{1}{4}\mathbb{E}\left(\sup_{r\in[\tau,t]}\|u^\lambda-u^{\lambda_0}\|^2\right).
\end{align}For the fourth term on the right-hand side of the equation, by \eqref{b6} and the BDG inequality we obtian
\begin{align}
&2\epsilon_{1,0}\mathbb{E}\left(\sup_{r\in[\tau,t]}\left|\int_\tau^r\sum_{k\in\mathbb{N}} \left\langle\sigma_k(s,u^\lambda)-\sigma_k(s,u^{\lambda_0}), u^\lambda(s)-u^{\lambda_0}(s)\right\rangle\mathrm{d}W_k(s)\right|\right)\notag\\
\leq&\frac{1}{4}\mathbb{E}\left(\sup_{r\in[\tau,t]}\|u^\lambda(s)-u^{\lambda_0}(s)\|^2\right)
+C\epsilon_{1,0}^2\int_\tau^t\mathbb{E}\left(\sup_{r\in[\tau,s]}
\|u^\lambda(r)-u^{\lambda_0}(r)\|^2\right)\mathrm{d}s.
\end{align}For the fifth term on the right side of \eqref{ppzz1} we have
\begin{align}
&\mathbb{E}\left(\int_\tau^t\sum_{k\in\mathbb{N}}\|(\epsilon_1-\epsilon_{1,0})\aleph_k(s,u^\lambda(s)) +\epsilon_{1,0}(\sigma(s,u^\lambda)-\sigma(s,u^{\lambda_0}))\|^2\mathrm{d}s\right)\notag\\
\leq&C(\epsilon_1-\epsilon_{1,0})^2\int_\tau^t\mathbb{E}\left(2\alpha^2 \|\delta\|^2 (1 + \|u^\lambda(s)\|^2)\right)\mathrm{d}s\notag\\
&+C\epsilon_{1,0}^2\int_\tau^t\mathbb{E}\left(\sup_{r\in[\tau,s]}\|u^\lambda(s)-u^{\lambda_0}(s)\|^2\right)\mathrm{d}s
\end{align}To address the other terms ($v$-equation)  of \eqref{ppzz1}, the same approach can be applied. For all $t\in[\tau,\tau+T]$ and $\mathbb{E}(\|\xi^\lambda\|^2_{L^2(\Omega,\ell^2\times\ell^2)})\leq K(\tau)$, we have
\begin{align}\label{vvbb}
&\mathbb{E}\left(\sup_{r\in[\tau,t]}\|\varphi^\lambda-\varphi^{\lambda_0}\|^2\right)
\leq C(\lambda-\lambda_0)^2+C\int_\tau^t\mathbb{E}
\left(\sup_{r\in[\tau,t]}\|\varphi^\lambda-\varphi^{\lambda_0}\|^2
\right)\mathrm{d}s.
\end{align}where $C$ is any positive constants which may change from line to line.
Applying Gronwall's inequality to \eqref{vvbb} we have
\begin{align}
\mathbb{E}\left(\sup_{r\in[\tau,t]}\|\varphi^\lambda-\varphi^{\lambda_0}\|^2\right)\leq  C(\lambda-\lambda_0)^2e^{C(t-\tau)}.
\end{align}Then we obtain for $\tau \in \mathbb{R}$, $K(\tau)$ and $t \in \mathbb{R}^+$,
\begin{align}\label{h20}
\sup_{\mathbb{E}(\|\xi\|^2) \leq K^2(\tau)} \mathbb{E} \left( \|\varphi^{\lambda}(\tau + t, \tau, \xi) - \varphi^{\lambda_0}(\tau + t, \tau, \xi)\|^2 \right) \leq \vartheta(\lambda),
\end{align}where $\vartheta(\lambda) \to \lambda_0$ as $\lambda \to \lambda_0$.   Following the approach proposed in \cite{Lime1, Lime2}, given $\varepsilon > 0$, by the tightness of $\mathcal{A}(\tau)$, there is a compact set $C_\varepsilon$ in $X$ such that
\begin{align}\label{iiyyygf1}
\sup_{\mu \in \mathcal{A}(\tau)} \mu(X\setminus C_\varepsilon)
= \sup_{N \in \mathbb{N}} \sup_{\mu \in \mathcal{A}(\tau)} \widetilde{\mu}(X \setminus C_\varepsilon)
\leq \varepsilon.
\end{align}Noting that $|\phi(x)-\phi(\hat{x})|\le 2$ for all $x,\hat{x}\in X$ and $\phi\in L_b(X)$ with $\|\phi\|_{L_b}\le1$, we see from \eqref{iiyyygf1} that for all $N\in\mathbb{N}$,
\begin{align}
J^1 :=&
\sup_{\mu\in\mathcal{A}(\tau)}\sup_{\|\phi\|_{L_b}\le1}
\int_{X\setminus C_\varepsilon}
\mathbb{E}\!\left[\phi(\varphi^\lambda(\tau+T,\tau,y))
-\phi(\varphi^{\lambda_0}(\tau+T,\tau,y))\right]\mu_N(dz)\notag\\
\le& 2P(\Omega)\sup_{\mu\in\mathcal{A}(\tau)}\mu(X\setminus C_\varepsilon)
\le 2\varepsilon.
\end{align}Note that $|\phi(x)-\phi(\hat{x})|\le\|x-\hat{x}\|_X$ for $\|\phi\|_{L_b}\le1$ and $C_\varepsilon$ is compact, by \eqref{h20},  we have
\begin{align}
\begin{aligned}
J^2
&:=
\sup_{\mu\in\mathcal{A}(\tau)}\sup_{\|\phi\|_{L_b}\le1}
\int_{X\cap C_\varepsilon}
\mathbb{E}\left[\phi(\varphi^\lambda(\tau+T,\tau,y))
-\phi(\varphi^{\lambda_0}(\tau+T,\tau,y))\right]\mu(dy)\\
&\le
\sup_{\mu\in\mathcal{A}(\tau)}
\int_{X\cap C_\varepsilon}
\mathbb{E}\|\varphi^\lambda(\tau+T,\tau,y)
- \varphi^{\lambda_0}(\tau+T,\tau,y)\|_X\,\mu(dy)\\
&\le
\sup_{\mu\in\mathcal{A}(\tau)}
\int_{X\cap C_\varepsilon}
\bigl(\mathbb{E}\|\varphi^\lambda(\tau+T,\tau,y)
- \varphi^{\lambda_0}(\tau+T,\tau,y)\|_X^2\bigr)^{1/2}\mu(dy)\\
&<\varepsilon.
\end{aligned}
\end{align}Using $\mathrm{supp}\,\tilde{v}\subset X$ for $v\in\mathcal{M}(X)$, when $\lambda\rightarrow\lambda_0$, we have from
\begin{equation*}
\int_{X} \phi(x)[S(t,\tau)\mu](dx)
= \int_{X} \mathbb{E}\phi(\varphi(t,\tau,x))\mu(dx),
\quad \forall\, \mu \in \mathcal{M}(X),\, \phi \in C_b(X).
\end{equation*}that
\begin{align}
\begin{aligned}
&\sup_{\mu \in \mathcal{A}(\tau)}
d_{\mathcal{M}(X)} \left( \widetilde{S}^\lambda(\tau+T,\tau)\mu,\, S^{\lambda_0}(\tau+T,\tau)\tilde{\mu} \right)\notag\\
=& \sup_{\mu \in \mathcal{A}(\tau)}
\sup_{\|\phi\|_{L_b} \leq 1}
\left| \int_X \phi(x) \left[\widetilde{S}(\tau+T,\tau)\mu\right](dx)
- \int_X \phi(x) \left[S(\tau+T,\tau)\tilde{\mu}\right](dx) \right| \\
= &\sup_{\mu \in \mathcal{A}(\tau)}
\sup_{\|\phi\|_{L_b} \leq 1}
\left| \int_{X} \phi(y)\left[S(\tau+T,\tau)\mu\right](dy)
- \int_X \mathbb{E} \phi(\varphi^{\lambda_0}(\tau+T,\tau,x)) \tilde{\mu}(dx) \right| \\
=& \sup_{\mu \in \mathcal{A}(\tau)}
\sup_{\|\phi\|_{L_b} \leq 1}
\left| \int_{X} \mathbb{E} \phi(\varphi^\lambda(\tau+T,\tau,y)) \mu(dy)
- \int_{X} \mathbb{E} \phi(\varphi^{\lambda_0}(\tau+T,\tau,y)) \mu(dy) \right| \\
\leq& J^1 + J^2 < 3\varepsilon,
\end{aligned}
\end{align}The whole proof is complete.
\end{proof}
By Lemma \ref{h17}, one can verify that
\begin{align}\label{h21}
K = \left\{ K(\tau) = \bigcup_{\lambda \in [0, 1]\times[0, 1]\times[0, 1]\times[0, 1]} \mathcal{A}_{\lambda}(\tau) : \tau \in \mathbb{R} \right\} \in \mathcal{D}.
\end{align}Then the main result of this section are given below.
\begin{theorem}
Suppose \eqref{b6}-\eqref{b7}, \eqref{bc0}, and \eqref{xs} hold. Then for $\tau \in \mathbb{R}$,
\begin{align}\label{h23}
\lim_{\lambda \to \lambda_0} d_{\mathcal{P}_2(\ell^2\times\ell^2)}(\mathcal{A}_{\lambda}(\tau), \mathcal{A}_{\lambda_0}(\tau)) = 0.
\end{align}
\end{theorem}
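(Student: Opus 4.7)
The plan is a standard three-step upper-semicontinuity argument: (i) produce a common absorbing family for all $\lambda$ near $\lambda_0$, (ii) exploit the attracting property of $\mathcal{A}_{\lambda_0}$ to reduce the problem to a finite time horizon, and (iii) apply the parametric solution-convergence of Lemma \ref{h22} on that horizon, promoted from pointwise $L^2$ bounds to dual-Lipschitz closeness by a tail-control argument.

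First I would verify that the family $K$ defined in \eqref{h21} belongs to $\mathcal{D}$. Inspection of Lemma \ref{h17} shows that the quantity $L_1(\tau)$ there is independent of $\lambda\in[0,1]^4$, so $\mathcal{A}_\lambda(\tau)\subseteq B_{\mathcal{P}_2(\ell^2\times\ell^2)}(L_1^{1/2}(\tau))$ uniformly in $\lambda$, and the temperedness condition \eqref{temped} follows verbatim. Consequently $K$ is a common $\mathcal{D}$-pullback absorbing family containing every $\mathcal{A}_\lambda(\tau)$. Next, fix $\varepsilon>0$ and $\tau\in\mathbb{R}$. Since $\mathcal{A}_{\lambda_0}$ attracts $K\in\mathcal{D}$ under $S^{\lambda_0}$, there exists $T=T(\tau,\varepsilon)>0$ such that
\[
d_{\mathcal{P}_2(\ell^2\times\ell^2)}\!\bigl(S^{\lambda_0}(T,\tau-T)K(\tau-T),\ \mathcal{A}_{\lambda_0}(\tau)\bigr)<\varepsilon/2.
\]
For any $\nu\in\mathcal{A}_\lambda(\tau)$, the invariance $S^\lambda(T,\tau-T)\mathcal{A}_\lambda(\tau-T)=\mathcal{A}_\lambda(\tau)$ yields $\nu=S^\lambda(T,\tau-T)\mu$ with $\mu\in\mathcal{A}_\lambda(\tau-T)\subseteq K(\tau-T)$, and the triangle inequality reduces the problem to showing
\[
\sup_{\mu\in K(\tau-T)} d_{\mathcal{P}_2(\ell^2\times\ell^2)}\!\bigl(S^\lambda(T,\tau-T)\mu,\ S^{\lambda_0}(T,\tau-T)\mu\bigr)\longrightarrow 0\quad\text{as }\lambda\to\lambda_0.
\]

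For this uniform estimate, given $\phi\in L_b(\ell^2\times\ell^2)$ with $\|\phi\|_L\le 1$ and a random variable $\xi$ with law $\mu\in K(\tau-T)$, I would split $|\mathbb{E}\phi(\varphi^\lambda)-\mathbb{E}\phi(\varphi^{\lambda_0})|$ over an event $E_{R,n}$ on which both $\varphi^\lambda(\tau,\tau-T,\xi)$ and $\varphi^{\lambda_0}(\tau,\tau-T,\xi)$ have norm at most $R$ and tail mass beyond $|i|>n$ at most $\eta$, and its complement. On $E_{R,n}$ the Lipschitz property of $\phi$ combined with the $L^2$-path estimate \eqref{h20} of Lemma \ref{h22} yields smallness; on $\Omega\setminus E_{R,n}$ the elementary bound $|\phi|\le 1$ is compensated via Chebyshev together with the uniform second-moment bound of Lemma \ref{yingli41} and the uniform tail estimate of Lemma \ref{z42}, whose constants are independent of $\lambda\in[0,1]^4$. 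Fixing $R$ and $n$ first and then letting $\lambda\to\lambda_0$ closes the argument.

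The main obstacle is the third step: converting the pointwise $L^2$-path convergence of Lemma \ref{h22} into uniform dual-Lipschitz convergence of the pushed-forward measures $S^\lambda(T,\tau-T)\mu$ over the non-compact family $K(\tau-T)$. This hinges on two lattice-specific facts: the tail estimate of Lemma \ref{z42} has constants uniform in $\lambda\in[0,1]^4$, providing the uniform tightness required to control the complement of $E_{R,n}$; and the dual-Lipschitz metric is weaker than Wasserstein-2, so weak closeness together with uniform moment control is sufficient. Once these are in place, the three-step scheme above delivers \eqref{h23}.
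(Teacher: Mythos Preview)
Your three-step scheme is exactly the one the paper uses: a $\lambda$-uniform absorbing family $K\in\mathcal{D}$, the attracting property of $\mathcal{A}_{\lambda_0}$ to fix a finite horizon $T$, and then parametric convergence of $S^\lambda$ to $S^{\lambda_0}$ on that horizon, combined via the triangle inequality with the invariance of $\mathcal{A}_\lambda$. The only noticeable difference is in how you justify the uniform dual-Lipschitz closeness in step~(iii). The paper, inside the proof of Lemma~\ref{h22}, exploits the tightness of the (compact) attractor section to extract a compact set $C_\varepsilon\subset\ell^2\times\ell^2$ of initial data, splits the $\phi$-integral over $C_\varepsilon$ and its complement, and applies \eqref{h20} on the compact part. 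You instead propose to split in the target space via the solution tail bounds of Lemma~\ref{z42} and the moment bound of Lemma~\ref{yingli41}, whose constants are uniform in $\lambda\in[0,1]^4$. Both routes work; yours has the mild advantage of not invoking compactness of the attractor as an input to the convergence lemma, relying only on estimates already established for arbitrary bounded-moment data. In fact, once \eqref{h20} is granted uniformly over $\{\xi:\mathbb{E}\|\xi\|^2\le L_1(\tau-T)\}$, the split is not even needed: for $\|\phi\|_L\le1$ one has $|\mathbb{E}\phi(\varphi^\lambda)-\mathbb{E}\phi(\varphi^{\lambda_0})|\le(\mathbb{E}\|\varphi^\lambda-\varphi^{\lambda_0}\|^2)^{1/2}\le\vartheta(\lambda)^{1/2}$ directly, so your $E_{R,n}$ decomposition, like the paper's $C_\varepsilon$ decomposition, is a safeguard rather than a necessity.
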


\begin{proof}
Set $K^\lambda = \{K^\lambda(\tau): \tau \in \mathbb{R}\}$, defined by \eqref{h21}, serves as a closed $\mathcal{D}$-pullback absorbing set for $S^{\lambda}$ uniformly for all $\lambda \in [0,1]^4$. Specifically, for every $\tau \in \mathbb{R}$ and $\mathcal{D} = \{\mathcal{D}(\tau): \tau \in \mathbb{R}\} \in \mathcal{D}$, there exists a time $T_1 = T_1(\tau, \mathcal{D}) > 0$ satisfying
\begin{align}
\label{newindependenty}
\bigcup_{t \ge T_1} \bigcup_{\lambda \in [0,1]^4} S^{\lambda}(t, \tau - t)\, \mathcal{D}(\tau - t) \subseteq K(\tau).
\end{align}

This, combined with the invariance of $\mathcal{A}^{\lambda}$, directly yields that $\mathcal{A}^{\lambda}(\tau) \subseteq K^\lambda(\tau)$ for all $\lambda \in [0,1]^4$ and $\tau \in \mathbb{R}$.

Next, let $\eta > 0$ be arbitrary. Since $\mathcal{K^\lambda} = \{\mathcal{K^\lambda}(\tau): \tau \in \mathbb{R}\} \in \mathcal{D}$ and $\mathcal{A}^\lambda = \{\mathcal{A}^\lambda(\tau): \tau \in \mathbb{R}\} \in \mathcal{D}$ is  \textbf{$\mathcal{D}$-PMAs} for $S$ in $(\mathcal{P}_2(\ell^2\times\ell^2), d_{\mathcal{P}(\ell^2\times\ell^2)})$, it follows that there exists a time $T_2 = T_2(\tau, K, \mathcal{A}) > 0$ such that
\begin{align}
\label{newindependenty1}
\sup_{\mu \in \mathcal{K}(\tau - T_2)} d_{\mathcal{P}(\ell^2\times\ell^2)}\bigl(S(T_2, \tau - T_2)\mu,\, \mathcal{A}(\tau)\bigr)
=
d_{\mathcal{P}(\ell^2\times\ell^2)}\bigl(S(T_2, \tau - T_2)\, K^\lambda(\tau - T_2),\, \mathcal{A}(\tau)\bigr)
< \frac{\eta}{2}.
\end{align}

By \eqref{h22io},
\begin{align}
\label{newindependenty2}
\sup_{\mu \in \mathcal{K}(\tau - T_2)}
d_{\mathcal{P}(\ell^2\times\ell^2)}\bigl(S^{\chi}(T_2, \tau - T_2)\mu,\; S(T_2, \tau - T_2)\mu\bigr)
< \frac{\eta}{2}.
\end{align}

Hence, using \eqref{newindependenty1}, \eqref{newindependenty2}, and the fact that $\mathcal{A}^{\lambda}(\tau - T_2) \subseteq K^\lambda(\tau - T_2)$ for all $\lambda \in [0,1]^4$, we obtain that for all $\lambda \in (0, 1)$,
\begin{align*}
&\sup_{\mu \in \mathcal{A}^{\lambda}(\tau - T_2)}  d_{\mathcal{P}(\ell^2\times\ell^2)}\bigl(S^{\lambda}(T_2, \tau - T_2)\mu,\, \mathcal{A}(\tau)\bigr)\notag \\
\le&
\sup_{\mu \in \mathcal{K}(\tau - T_2)} d_{\mathcal{P}(\ell^2\times\ell^2)}\bigl(S^{\lambda}(T_2, \tau - T_2)\mu,\, S(T_2, \tau - T_2)\mu\bigr) \\
& +
\sup_{\mu \in \mathcal{K}(\tau - T_2)} d_{\mathcal{P}(\ell^2\times\ell^2)}\bigl(S(T_2, \tau - T_2)\mu,\, \mathcal{A}(\tau)\bigr)
< \eta.
\end{align*}In view of the invariance of $\mathcal{A}^{\lambda}$, it follows that for all $\lambda \in (0, 1)$,
\begin{align*}
d_{\mathcal{P}(\ell^2\times\ell^2)}\bigl(\mathcal{A}^{\lambda}(\tau),\, \mathcal{A}(\tau)\bigr)
=
\sup_{\mu \in \mathcal{A}^{\lambda}(\tau)}
d_{\mathcal{P}(\ell^2\times\ell^2)}\bigl(\mu,\, \mathcal{A}(\tau)\bigr)
< \eta.
\end{align*}This completes the proof.
\end{proof}

\section{Example for numerical simulations}
In this section, we give a   numerical simulation for a one-dimensional stochastic ODE:
\begin{align}\label{v0s}
du(t) &= \left[ -2.5\,u(t) + u^2(t)\,v(t) - u^3(t) + e^{-t} \right]\,dt
+ \epsilon_1 \left[ h(t) + \sigma(t, u(t)) \right]\,dW(t) \notag \\
&\quad + \epsilon_2 \int_{\mathbb{R}} \left[ \kappa(t) + q(t, u(t), y) \right]\,\tilde{L}(dy, dt), \notag\\
dv(t) &= \left[ -v(t) - u^2(t)\,v(t) + u^3(t) + e^{-t} \right]\,dt
+ \gamma_1 \left[ h(t) + \sigma(t, v(t)) \right]\,dW(t) \notag \\
&\quad + \gamma_2 \int_{\mathbb{R}} \left[ \kappa(t) + q(t, v(t), y) \right]\,\tilde{L}(dy, dt),
\end{align}with $u(0)= 2.0, v(0)=0.0$, where $p=1$, $\lambda \in [0, 1]^4$,
\begin{align*}
h(t) &= \cos(2t), \quad
\sigma(t, x) = e^{-t^2} x^2, \quad
\kappa(t) = \sin(2t), \quad
q(t, x, y) = \frac{e^{-t^2} x^2}{1 + y^2}.
\end{align*}The numerical scheme is based on the Euler--Maruyama method adapted for SDEs with L\'evy noise. The simulation was performed on the interval \([0, T]\) with \(T = 10.0\), using a fixed time step \(\Delta t = 0.001\), yielding \(N = 10{,}000\) time steps. The L\'evy noise was modeled as a \emph{compound Poisson process} with jump intensity $\lambda_{\text{Poisson}} = 2.0$ and Gaussian jump amplitude $y_{n,j} \sim \mathcal{N}(0, \sigma_{\text{jump}}^2)$ with $\sigma_{\text{jump}} = 1.0$. The Brownian increment was simulated as $\Delta W_n \sim \mathcal{N}(0, \Delta t)$.

The discretized scheme takes the form:
\[
\begin{aligned}
u_{n+1} &= u_n + \left[-2.5u_n + u_n^2v_n - u_n^3 + e^{-t_n} \right]\Delta t
+ \varepsilon_1\left[\cos(2t_n) + e^{-t_n^2}u_n^2\right]\Delta W_n \\
&\quad + \varepsilon_2 \sum_{j=1}^{P_n} \left[ \sin(2t_n) + \frac{e^{-t_n^2}u_n^2}{1 + y_{n,j}^2} \right] y_{n,j},\\
v_{n+1} &= v_n + \left[-v_n - u_n^2v_n + u_n^3 + e^{-t_n} \right]\Delta t
+ \varepsilon_1\left[\cos(2t_n) + e^{-t_n^2}v_n^2\right]\Delta W_n \\
&\quad + \varepsilon_2 \sum_{j=1}^{P_n} \left[ \sin(2t_n) + \frac{e^{-t_n^2}v_n^2}{1 + y_{n,j}^2} \right] y_{n,j},
\end{aligned}
\]
where \(P_n \sim \text{Poisson}(\lambda_{\text{Poisson}} \Delta t)\) is the number of jumps, and \(y_{n,j}\) are the jump magnitudes.

\vspace{-8pt}
\begin{center}
    \includegraphics[width=0.9\textwidth, height=5cm]{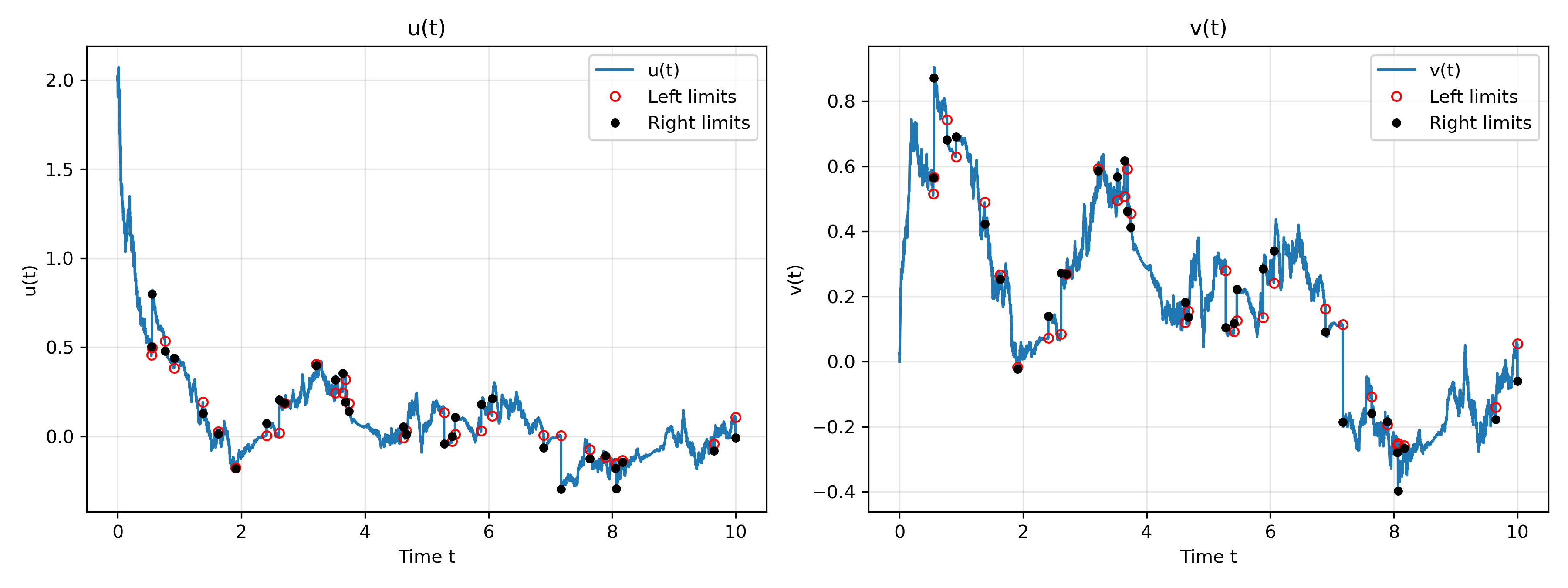}\\
    \small\textbf{Figure 4:} The sample path of L\'{e}vy processes ($\lambda\neq\lambda_0$)
    \label{fig:cadlag_path4}
\end{center}
\vspace{-8pt}
\vspace{-8pt}
\begin{center}
    \includegraphics[width=0.9\textwidth, height=5cm]{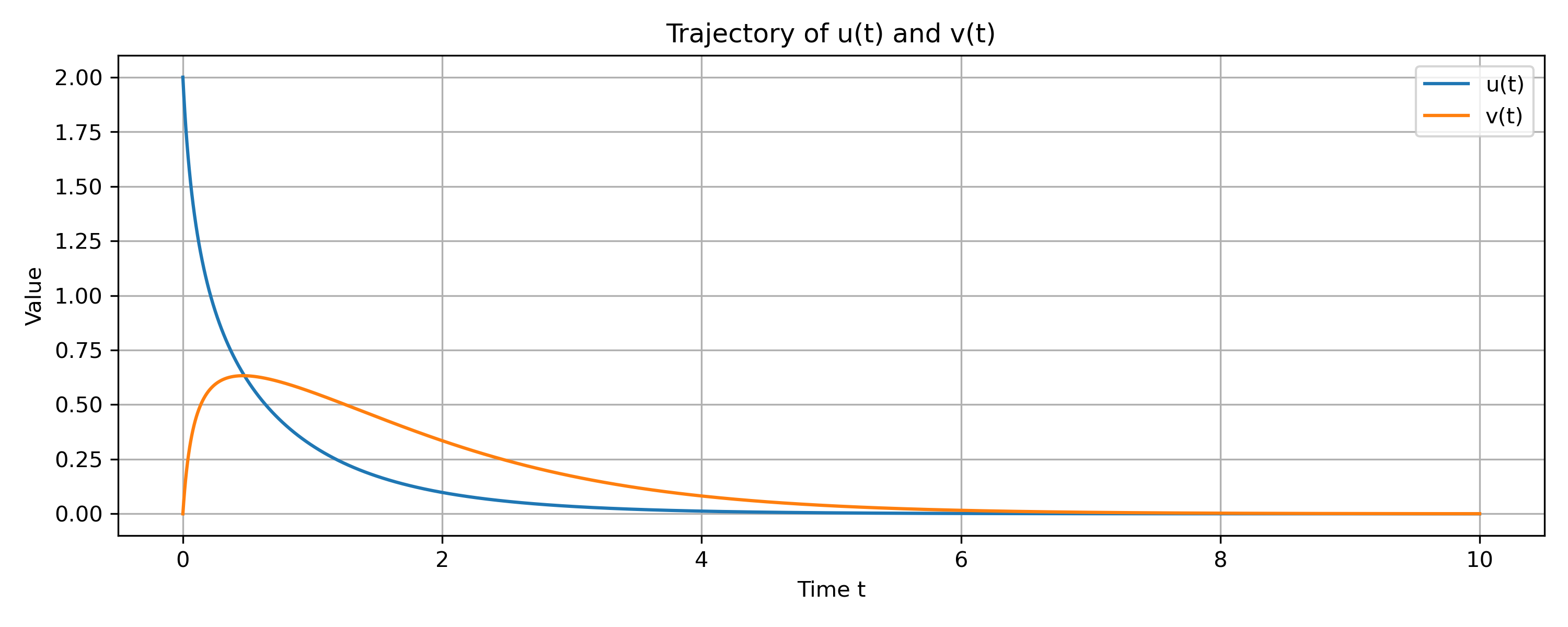}\\
    \small\textbf{Figure 5:} The sample path of L\'{e}vy processes ($\lambda=\lambda_0$)
    \label{fig:cadlag_path5}
\end{center}
\vspace{-8pt}
By combining the Figure 3, Figure 4, Figure 5, we present simulations for both the deterministic ODE system ($\lambda = \lambda_0$) and the stochastic ODE system ($\lambda \neq \lambda_0$) in the figure. As shown, the ODE trajectories fluctuate around the deterministic ones due to noise, remaining close overall, and the distance between the two trajectories decreases as $\epsilon$ tends to zero. This numerically illustrates that, in some sense, $\mathcal{A}^{\lambda}(\tau) \rightarrow \mathcal{A}^{\lambda_0}(\tau)$ as $\lambda \to \lambda_0$.

\section*{Acknowledgments and Funding}
This work is supported by  National Natural Science Foundation of China (12161019, 12361046), Guizhou Provincial Basic Research Program (Natural Science) (QKHJC-ZK [2022] YB 318), Natural Science Research Project of Guizhou Provincial Department of Education (QJJ [2023] 011).
\section*{Data availability}

No data was used for the research described in the article.
\section*{Author Contributions}

\noindent
\textbf{Guofu Li}: Writing-review \& editing, Writing-original draft;
\textbf{Jianxin Wu}: Writing-review \& editing;
\textbf{Yunshun Wu}: Writing-review \& editing.

\section*{Declarations}

\textbf{Conflict of interest}\quad No conflict of interest exists in the submission of this manuscript.

\end{document}